\documentclass[notitlepage,a4,10.5pt]{article}

\usepackage[colorlinks=true,citecolor=blue,linkcolor=blue]{hyperref}

\usepackage[left=2.5cm,right=2.5cm,top=3cm,bottom=3cm]{geometry} 

\usepackage{tikz}
\usetikzlibrary{arrows.meta,positioning,calc,shapes,fit}
\usepackage{subcaption}

\usepackage{tabularx}
\usepackage{array}
\newcolumntype{Y}{>{\raggedright\arraybackslash}X}
\usepackage{booktabs}  
\usepackage{multirow}  

\usepackage{graphicx}

\usepackage{amsmath}%
\numberwithin{equation}{section}

\usepackage{amsfonts}%
\usepackage{amssymb}%
\usepackage{mathrsfs}
\usepackage{amsthm}
\usepackage{float}

\usepackage{authblk}
\usepackage{enumitem}

\usepackage{xcolor}
\usepackage{appendix}

\newcommand{\ml}{\mathcal}
\newcommand{\cp}{\times}

\newcommand{\bol}{\boldsymbol}

\newcommand{\abs}[1]{\left\lvert{#1}\right\rvert}
\newcommand{\w}{\wedge}
\newcommand{\lr}[1]{\left({#1}\right)}
\newcommand{\lrs}[1]{\left[{#1}\right]}
\newcommand{\lrc}[1]{\left\{{#1}\right\}}

\newcommand{\mf}{\mathfrak}
\newcommand{\p}{\partial}

\newtheorem{thm}{\textit{Theorem}}[section]

\theoremstyle{definition}
\newtheorem{mydef}[thm]{Definition}

\theoremstyle{definition}

\theoremstyle{remark}
\newtheorem{remark}[thm]{\textbf{Remark}}

\newtheorem{prop}[thm]{\textbf{Proposition}}

\newtheorem{lemma}[thm]{\textbf{Lemma}}
\newtheorem{q}[thm]{\textbf{Question}}

\newcommand{\eq}[1]{\begin{equation}\begin{split}{#1}\end{split}\end{equation}}
\newcommand{\sys}[2]{\begin{subequations}\begin{align}{#1}\end{align}\label{#2}\end{subequations}}

\definecolor{myyellow}{RGB}{255,255,180}

\begin{document}

\title{
Admissible Invariant-Torus Foliations for Steady Euler Flows
}
\author{Naoki Sato\thanks{Corresponding author} {}\thanks{National Institute for Fusion Science,  322-6 Oroshi-cho Toki-city, Gifu 509-5292, Japan,\ Email: \href{sato.naoki@nifs.ac.jp}{sato.naoki@nifs.ac.jp}
}  
{}\thanks{Graduate School of Frontier Sciences, The University of Tokyo,  
Kashiwa, Chiba 277-8561, Japan}
\qquad Ken Abe \thanks{Department of Mathematics, Graduate School of Science,  Osaka Metropolitan University 3-3-138, Sugimoto, Sumiyoshi-ku, Osaka 558-8585, Japan, Email: \href{kabe@omu.ac.jp}{kabe@omu.ac.jp}}}
 
\date{\today}
\setcounter{Maxaffil}{0}
\renewcommand\Affilfont{\itshape\small}

    \maketitle

\begin{abstract}
In 1965, V.~I.~Arnold established a structure theorem guaranteeing the existence of a foliation by invariant surfaces for general three-dimensional steady Euler flows with non-constant pressure. In this paper, we investigate what foliation structures can arise in steady Euler flows.

We consider a toroidal domain foliated by the level sets of a flux function $\Psi$, and prove that every $C^{1}$ steady Euler flow $(\bol{u},p)$ satisfying the assumptions $\iota_{\bol{u}}d\Psi=0$ and $p=p(\Psi)$ admits the tangential flow representation
\[
\bol{u}=c_1(\Psi)\bol{\xi}^{1}+c_2(\Psi)\bol{\xi}^{2},
\]
for some lifted solenoidal vector fields $\bol{\xi}^{1}$ and $\bol{\xi}^{2}$ associated with a natural basis of weighted harmonic one-forms on the toroidal leaves. Moreover, the flux function $\Psi$ satisfies a single scalar equation, referred to as the normal flux equation. These characterizations reveal the general foliation structure of steady Euler flows, with the Clebsch representation and the Grad--Shafranov equation recovered as the axisymmetric special case.
\end{abstract}

\if{
\begin{abstract}
{
We study admissible invariant-torus foliations of steady Euler flows in a smooth hollow toroidal domain. More precisely, given a smooth foliation by embedded tori, we ask when it can support a steady Euler flow that is tangent to the leaves and whose Bernoulli function is constant on each leaf. Under these assumptions, we show that the steady Euler system separates into a leafwise Hodge-theoretic constraint and a transverse scalar compatibility condition. The leafwise constraint forces the tangential velocity $1$-form to be weighted harmonic on each torus. This harmonic decomposition generalizes the Clebsch representation appearing in the axisymmetric case. On each torus, prescribing the two periods selects a unique weighted harmonic representative through a leafwise elliptic problem. Substitution into the transverse condition yields a closed scalar equation for the foliation, which is pointwise in the leaf label but generally nonlocal along each torus. In the presence of axial symmetry, this foliation-admissibility equation reduces to the classical Grad--Shafranov equation. 
\end{abstract}
}\fi


\tableofcontents

\section{Introduction}
\subsection{Foliations of steady Euler flows
}\label{subsec:clGS}

This paper is concerned with the steady Euler equations
\eq{
\iota_{\bol{u}}du=-dp,
\qquad
d\iota_{\bol{u}}dV=0,
\label{SE}
}
in a smooth bounded domain $\Omega\subset\mathbb{R}^3$.
Here $\bol{u}\lr{\bol{x}}$ denotes the velocity vector field, $u=\bol{u}^{\flat}$ its associated one-form, $p\lr{\bol{x}}$ the Bernoulli function, $\bol{x}=\lr{x^1,x^2,x^3}\in\mathbb{R}^3$, and $dV$ the Euclidean volume form on $\mathbb{R}^3$. The one-form equations \eqref{SE} are the dual equations to the vector form  
\eq{
(\nabla\times  \bol{u})\times \bol{u}=-\nabla p,
\qquad
\nabla \cdot \bol{u}=0.
\label{SEV}
}

Classical constructions of solutions to \eqref{SE} are often based on symmetry assumptions. In a smooth axisymmetric toroidal domain $\Omega$ with cylindrical
coordinates $(r,\phi,z)$, an axisymmetric solenoidal vector field
admits the Clebsch representation
\begin{align}
\bol{u}
 = \nabla\Psi(r,z)\times\nabla\phi
   +\alpha(r,z)\nabla\phi,
\label{Clebsch}
\end{align}
where $\Psi$ is a flux function and $\alpha$ is a scalar function;
see, e.g., \cite{Yoshida09}. The function $\Psi$ is a first integral of
$\bol{u}$, i.e., $\iota_{\bol{u}}d\Psi=0$, and hence $\bol{u}$ is tangent to each flux surface
$\Sigma_{\Psi_0}=\{\Psi=\Psi_0\}$. If $\alpha$ and $p$ depend locally only on
$\Psi$, the momentum equation in \eqref{SE} reduces to the
Grad--Shafranov equation \cite{Grad58,Shafranov1958}:
\begin{align}
\Delta^*\Psi+\alpha(\Psi)\alpha'(\Psi)
 =r^2p'(\Psi),
\label{GSeq0}
\end{align}
where $\Delta^*=\partial_r^2+\partial_z^2-r^{-1}\partial_r$ is the Grad--Shafranov operator. Thus, under axisymmetry, the steady
Euler equations decompose into the Clebsch representation
\eqref{Clebsch} and the Grad--Shafranov equation \eqref{GSeq0}.
Conversely, for globally  prescribed functions $\alpha=\alpha(\Psi)$ and
$p=p(\Psi)$, a solution of \eqref{GSeq0} yields an axisymmetric
solution of \eqref{SE} through \eqref{Clebsch}
\cite{Gavrilov2019,ConstantinLaVicol2019,Dominguez2021}; see also \cite{Abe2022}.

Axisymmetric solutions to \eqref{SE} whose regular level sets of
$\Psi$ form nested tori admit invariant-torus foliations. The velocity
field $\bol{u}$ is tangent to, and winds on, each torus
$\Sigma_{\Psi_0}$; see Figure~\ref{fig1}. Moreover,
the restriction of the velocity one-form to each torus is weighted
harmonic. More precisely,
\begin{align}
\iota_{\nabla\times\bol{u}}d\Psi=0,
\qquad
{d_{\Sigma_{\Psi_0}}
\left(
\iota_{\bol{u}}d\Sigma_{\Psi_0}
\right)=0},
\label{eq:weighted-harmonic-axisym}
\end{align}
where the leafwise measure $d\Sigma_{\Psi_0}$ is determined by $dV=d\Psi\wedge d\Sigma_{\Psi_0}$ {and $d_{\Sigma_{\Psi_0}}$ denotes the exterior derivative on the leaf $\Sigma_{\Psi_0}$}.

Even without symmetry assumptions, Arnold's classical structure
theorem \cite{Arnold65, Arnold1966}, \cite[II, Theorem 1.2]{ArnoldKh} guarantees, in the analytic
setting, that a general steady Euler flow with nonconstant pressure
admits a decomposition into regions foliated by invariant tori or
invariant cylinders; cf. \cite[Problem 33]{KMS23}.

\begin{thm}[Arnold's structure theorem] \label{thm:Arnold}
Assume that the region $\Omega \subset\mathbb{R}^3$ is bounded by a compact analytic surface, and that the field of velocities is 
analytic and not everywhere collinear with its curl. Then the region of the flow
can be partitioned by an analytic submanifold into a finite number of cells, in
each of which the flow is constructed in a standard way. Namely, the cells are
of two types: those fibered into tori invariant under the flow and those fibered
into surfaces invariant under the flow, diffeomorphic to the annulus $\mathbb{R}\times S^1$. On 
each of these tori the flow lines are either all closed or all
dense, and on each annulus all flow lines are closed.
\end{thm}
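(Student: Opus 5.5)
Since $\bol{u}$ is not everywhere collinear with $\nabla\times\bol{u}$, I will work in the set where the Bernoulli function is regular and use $p$ itself as the invariant function. The argument has four steps.

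\textbf{Step 1: $p$ is a common first integral.} From the momentum equation in \eqref{SE}, $dp=-\iota_{\bol{u}}du$. Contracting with $\bol{u}$ gives $\iota_{\bol{u}}dp=-\iota_{\bol{u}}\iota_{\bol{u}}du=0$. Writing $du=\iota_{\bol{\omega}}dV$ with $\bol{\omega}=\nabla\times\bol{u}$ yields $\iota_{\bol{\omega}}dp=0$ in the same way. So $p$ is a first integral of both $\bol{u}$ and $\bol{\omega}$.

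\textbf{Step 2: $p$ is nonconstant.} Suppose $p$ were constant. Then $\iota_{\bol{u}}du=0$, which means $\bol{\omega}\times\bol{u}=0$ everywhere. This contradicts the hypothesis, so $p$ is a nonconstant analytic function on $\overline{\Omega}$.

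\textbf{Step 3: cell decomposition.} The critical set $\{dp=0\}$ is an analytic subvariety. Together with the boundary $\partial\Omega$, it gives a semianalytic set. By analytic stratification (Łojasiewicz), its complement has finitely many components. On each component, $p$ is a submersion, so the region is fibered by the level sets $\{p=c\}$. These are regular analytic surfaces invariant under $\bol{u}$ by Step 1. The separating set should also include critical levels and levels tangent to $\partial\Omega$. Finiteness of the critical values follows because an analytic function on a compact set has finitely many critical values. I expect this step to be the main technical obstacle: showing the separating set is an analytic (semianalytic) submanifold of lower dimension and that the number of cells is finite. Compactness and analyticity are needed exactly here.

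\textbf{Step 4: topology and dynamics on leaves.} On a regular leaf $\Sigma$, the field $\bol{u}$ is tangent, and by Step 1 so is $\bol{\omega}$. The commutation $[\bol{u},\bol{\omega}]=0$ follows from taking the curl of $\bol{\omega}\times\bol{u}=\nabla p$ and using $\nabla\cdot\bol{u}=\nabla\cdot\bol{\omega}=0$.

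Where $\bol{u}$ and $\bol{\omega}$ are independent, the two fields give a locally free $\mathbb{R}^2$-action on $\Sigma$. Their independence on regular leaves comes from $dp\neq0$, since $dp=-\iota_{\bol{u}}\iota_{\bol{\omega}}dV$ and this vanishes if $\bol{u}\parallel\bol{\omega}$. A connected surface carrying a transitive locally free $\mathbb{R}^2$-action is $\mathbb{R}^2/\Gamma$, with $\Gamma$ a discrete subgroup. Compact leaves give $\Gamma\cong\mathbb{Z}^2$, so the leaf is a torus. Leaves reaching the boundary (non-compact in the interior) have $\Gamma\cong\mathbb{Z}$ and are annuli $\mathbb{R}\times S^1$. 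The case $\Gamma=0$ is excluded because the leaf lies in a bounded region where the flows are complete along the tangential boundary.

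In the angle coordinates of $\mathbb{R}^2/\Gamma$, $\bol{u}$ is a constant vector field. On a torus this gives linear flow, so trajectories are all closed or all dense according to the rationality of the slope. On an annulus, the trajectories must stay bounded, which forces them to wind along the $S^1$ direction, so they are closed.

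Within a cell, all leaves are diffeomorphic by the submersion (Ehresmann) property, so each cell is of a single type. This yields the statement of Theorem~\ref{thm:Arnold}.
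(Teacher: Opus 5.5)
The paper does not prove this theorem; it quotes it from Arnold and Arnold--Khesin. Your Steps 1--3 and the torus half of Step 4 reproduce that classical argument: $p$ is a common first integral of $\bol{u}$ and $\bol{\omega}=\nabla\times\bol{u}$, $[\bol{u},\bol{\omega}]=0$, independence follows from $dp=-\iota_{\bol{u}}\iota_{\bol{\omega}}dV\neq0$, and Liouville's argument applies on compact leaves. Step 3 is acceptable once you remove the full critical levels of $p$ and of $p|_{\partial\Omega}$ and apply Ehresmann with boundary.

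\textbf{The gap is the annulus case.} Your $\mathbb{R}^2/\Gamma$ classification needs a genuine $\mathbb{R}^2$-action, i.e.\ both flows complete on the leaf. This fails on every leaf $\Sigma$ that meets $\partial\Omega$. Only $\bol{u}$ satisfies $\bol{u}\cdot\bol{n}=0$, so $\bol{u}$ is tangent to the curves $\Sigma\cap\partial\Omega$. Since $\bol{\omega}$ is tangent to $\Sigma$ and independent of $\bol{u}$, it is transverse to these curves, so its flow leaves $\Sigma$ in finite time. Hence ``the flows are complete along the tangential boundary'' is false, and no action exists. The two conclusions you draw also do not follow from boundedness. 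A bounded surface can be an open disk ($\Gamma=0$). And in $\mathbb{R}\times S^1$ coordinates, a non-closed linear orbit runs off to the ends of the annulus, i.e.\ accumulates on $\partial\Omega$, while staying bounded in $\mathbb{R}^3$.

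\textbf{A fix is to argue on the closed leaf.} Take $c$ regular for both $p$ and $p|_{\partial\Omega}$. A component $\Sigma$ of $\{p=c\}\cap\overline{\Omega}$ meeting $\partial\Omega$ is then a compact orientable surface whose boundary is a finite union of circles. Each circle is a closed orbit of $\bol{u}$, because $\bol{u}\neq0$ is tangent to both $\Sigma$ and $\partial\Omega$. Poincar\'e--Hopf on the double of $\Sigma$ gives $\chi(\Sigma)=0$, so $\Sigma\cong[0,1]\times S^1$ with interior $\mathbb{R}\times S^1$; this, not boundedness, is what excludes disks.

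For closedness of the orbits, replace the action by the one-form $\beta$ on $\Sigma$ with $\beta(\bol{u})=0$ and $\beta(\bol{\omega})=1$. From $[\bol{u},\bol{\omega}]=0$ one gets $d\beta=0$. Also $\beta$ integrates to zero over a boundary circle, since that circle is a $\bol{u}$-orbit, and a boundary circle generates $H_1(\Sigma)$. Hence $\beta=dg$ is exact. Then $g$ is a first integral of $\bol{u}$ with no critical points and constant on each component of $\partial\Sigma$. Its level sets are therefore circles tangent to $\bol{u}$, so every orbit is closed.

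You should also state the boundary condition $\bol{u}\cdot\bol{n}=0$ explicitly, since the whole annulus case rests on it.
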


Arnold's theorem guarantees the existence of invariant-surface
foliations for analytic steady Euler flows with nonconstant pressure.
It does not, however, provide an explicit characterization of the
foliations compatible with the Euler equations, nor an equation that
reconstructs the flow from a prescribed foliation. By contrast, in
the axisymmetric setting, the Clebsch representation
\eqref{Clebsch} and the Grad--Shafranov equation \eqref{GSeq0}
give an explicit description of both the velocity field and its
invariant-torus foliation.

\begin{figure}[h]
  \centering  \includegraphics[width=0.40\textwidth]{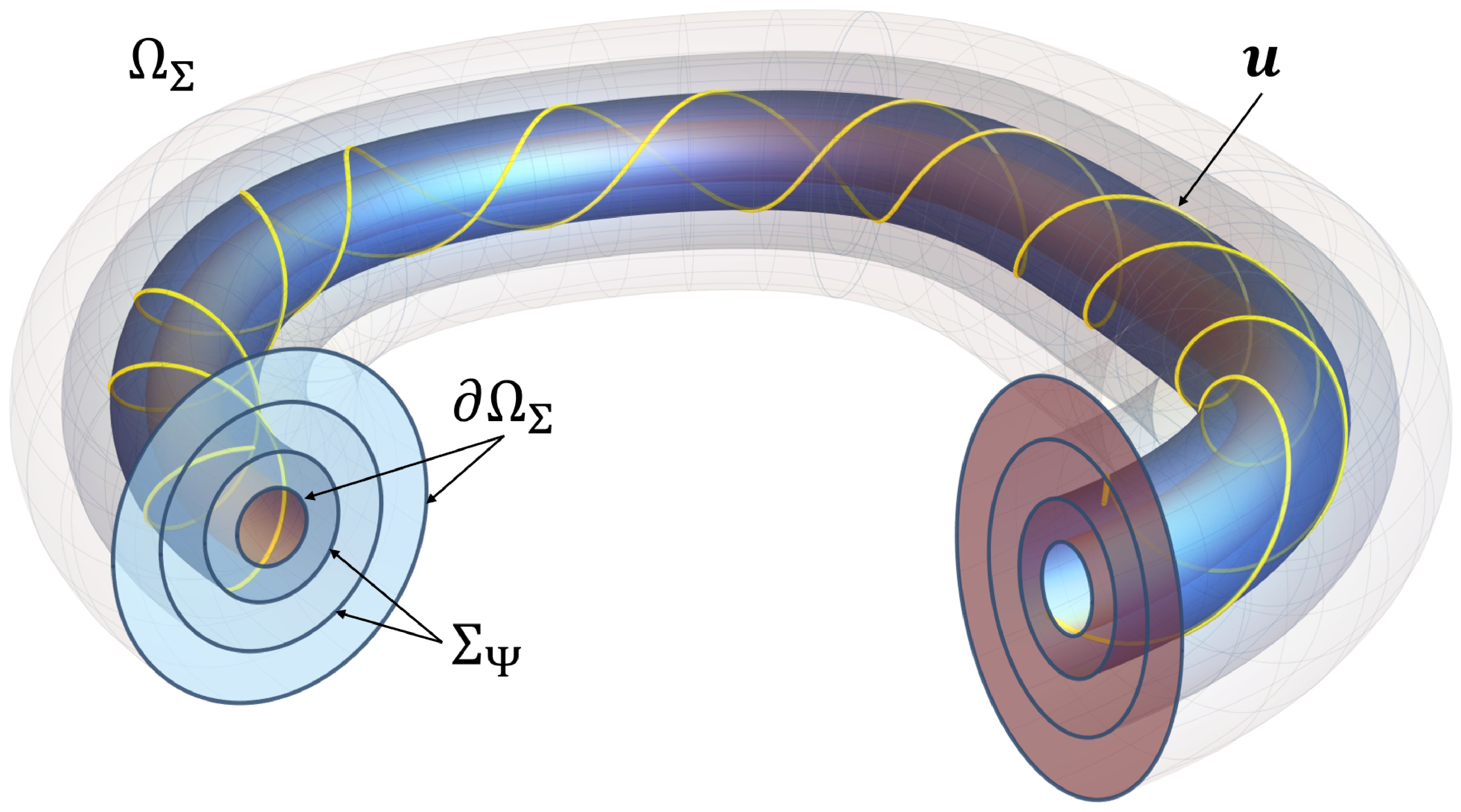}
  \caption{A hollow toroidal domain
  $\Omega_{\Sigma}$ foliated by nested invariant tori
  $\Sigma_{\Psi_0}=\{\Psi=\Psi_0\}$. The foliation is cut
  to display a section of the hollow toroidal structure. 
  }
  \label{fig1}
\end{figure}

In plasma physics, solutions of \eqref{SE} describe
magnetohydrostatic (MHS) equilibria. H. Grad conjectured that every MHS
equilibrium with nonconstant pressure must possess a continuous
Euclidean symmetry \cite{Grad67,Grad85}; see also
\cite[Conjecture~1]{ConstantinDrivasGinsberg21}. The underlying intuition is that the existence of a smooth foliation
by magnetic surfaces severely constrains the magnetic field and is
therefore expected to require continuous symmetry.

From the viewpoint of Theorem~\ref{thm:Arnold}, Grad's conjecture can
be interpreted as a geometric rigidity statement: invariant
foliations arising in steady Euler flows with nonconstant pressure
should be induced by continuous Euclidean symmetries. The background
leading to this viewpoint is summarized in Figure~\ref{f2} (I).
This leads to the following question, which, to the best of our
knowledge, has not previously been formulated explicitly.

\begin{q}[Admissible foliations]\label{q0}
Which foliations can arise in steady Euler flows \eqref{SE} with non-constant pressure?
\end{q}

The existence of nonsymmetric steady Euler flows admitting
invariant-torus foliations has been discussed by Weitzner
\cite{Weitzner14}. More recently, $m$-fold symmetric steady solutions
whose invariant cylinders are rational surfaces were constructed in
\cite{Drivas25}; cf.~\cite{Lortz70}. In the H\"older-continuous category,
steady Euler flows with streamlines of arbitrary topology have also
been constructed by convex integration \cite{EncisoPenafielTomasPeraltaSalas2025TopologyPreserving}. On the other
hand, Peralta-Salas and Slobodeanu \cite{PeraltaSalas2026} proved that every
localizable steady Euler flow, that is, every steady Euler flow for
which $|\bol{u}|^2$ is a first integral, necessarily possesses a
continuous symmetry; see also \cite{Gavrilov2019, ConstantinLaVicol2019,Dominguez2021}.

\subsection{Geometric setup}

Figure~\ref{f2} (II) illustrates our approach. In this paper, we address Question~\ref{q0} within the class of steady
Euler flows admitting an invariant-torus foliation in a toroidal
domain. Motivated by Arnold's structure theorem, we consider steady Euler
flows whose level sets of the pressure (the Bernoulli surfaces) form a family of nested tori. More precisely, we
consider the following geometric structure:
\begin{itemize}
\item the velocity field $\bol{u}$ is tangent to the level sets of
      the pressure $p$, namely,
      \[
      \iota_{\bol{u}}dp=0;
      \]
\item the regular level sets $\{p=p_0\}$ form a family of nested tori.
\end{itemize}
We encode this structure by a flux function $\Psi$ whose regular
level sets are nested tori. We then regard the pressure as a function
of $\Psi$. Accordingly, we consider steady Euler flows satisfying
\begin{align}
\iota_{\bol{u}}d\Psi&=0,
\label{eq:A1}\\
p&=p(\Psi).
\label{eq:A2}
\end{align}

\begin{figure}[h]
\centering

\usetikzlibrary{positioning}

\begin{tikzpicture}[
box/.style={
    draw,
    rounded corners,
    align=left,
    text width=0.40\linewidth,   
    inner sep=2mm                
},
node distance=6mm and 1.5cm
]

\node[box] (L1) at (-4.3,0) {$\bigstar$ Axisymmetric steady states to \eqref{SE} admit invariant-torus foliations with 
\begin{itemize}
\item Clebsch representation \eqref{Clebsch}
\item Grad--Shafranov equation \eqref{GSeq0}
\end{itemize}};

\node[box] (L2) at (-4.3,-2.0) {(1) General steady states to \eqref{SE}
};

\node (A2) at ($(L2.south)+(-3.0,-0.8)$) {\LARGE $\Downarrow$};

\node[
    anchor=west,
    font=\small
] at ($(A2.east)+(2mm,0)$)
{{Arnold's structure theorem (Theorem \ref{thm:Arnold})}};

\node[box] (L3) at (-4.3,-4.5) {(2) Existence of foliations by invariant surfaces (tori/cylinders)};

\node (AL3) at ($(L3.south)+(-3.0,-0.8)$) {\LARGE $\Downarrow$};

\node[
    anchor=west,
    font=\small
] at ($(AL3.east)+(2mm,0)$)
{Grad's conjecture on symmetry};

\node[box] (L4) at (-4.3,-7.2) {(3) Which foliations can arise? 
(Question \ref{q0})};

\node[
    fit=(L1)(L2)(L3)(L4),
    inner sep=6mm,
    label={[font=\bfseries]above:
        (I) The background leading to Question \ref{q0}}
] {};

\node[box] (R1) at (4.3,0) {(1) General steady states to \eqref{SE} with invariant-torus foliations satisfying 
\begin{itemize}
\item Flux condition \eqref{eq:A1} 
\item Pressure condition \eqref{eq:A2}
\end{itemize} 
};

\node (B1) at ($(R1.south)+(-3.0,-0.9)$) {\Large $\Downarrow$};

\node[
    anchor=west,
    font=\small
] at ($(B1.east)+(2mm,0)$)
{Weighted Hodge-theory};

\node[box] (R2) at (4.3,-4.3) {(2) Foliation structure (Theorem \ref{thm:lnGS}; the main result of this paper)
\begin{itemize}
\item Tangential flow representation \eqref{UA}
\item Normal flux equation \eqref{lnGS}
\end{itemize}
};

\node (B2) at ($(R1.south)+(-3.0,-5.1)$) {\Large $\Downarrow$};

\node[
    anchor=west,
    font=\small
] at ($(B2.east)+(2mm,-0.3mm)$)
{Symmetry};

\node[box] (R3) at (4.3,-7.7) {$\bigstar$ Axisymmetric steady states to \eqref{SE} (Theorem \ref{t:axisymmetric})};

\node[
    fit=(R1)(R2)(R3),
    inner sep=6mm,
    label={[font=\bfseries]above:
        (II) This paper's approach to Question \ref{q0}}
] {};
\end{tikzpicture}
\caption{The background leading to Question \ref{q0} and this paper's approach}\label{f2}
\end{figure}

To avoid unnecessary technical complications, we assume that
the toroidal domain has a hollow core; see Figure \ref{fig1}.

\begin{mydef}[Foliated toroidal domain]\label{def:marked-foliation}
Let $\Omega_{\Sigma}\subset\mathbb{R}^3$ be a smooth bounded domain. We say
that $\Omega_{\Sigma}$ is a (hollow) foliated toroidal domain if there
exists a smooth submersion $\Psi:\Omega_{\Sigma}\to I\subset \mathbb{R}$ such
that $\nabla\Psi\neq\bol{0}$ and $\Omega_{\Sigma}=\cup_{\Psi_0\in I}\Sigma_{\Psi_0}$ with smooth tori
\eq{
\Sigma_{\Psi_0}
=
\{\bol{x}\in\Omega_{\Sigma}:\Psi(\bol{x})=\Psi_0\}
\simeq \mathbb{T}^2 .  \label{eq:ident}
}
We call $\Psi$ a flux function. We denote the inclusion map by  $i:\Sigma_{\Psi_0}\hookrightarrow\mathbb{R}^3$. 
\end{mydef}

The flux function $\Psi$ is not prescribed a priori; it is part of
the unknown solution of \eqref{SE}. For a given candidate foliation,
we construct angle coordinates $(\theta^1,\theta^2)$ on each leaf
$\Sigma_\Psi$ and the corresponding adapted coordinates
$(\Psi,\theta^1,\theta^2)$ on $\Omega_\Sigma$.

\begin{mydef}[Leafwise coordinates] \label{n:lg}

Let $T$ be a smooth vector field $T$ such that 
\begin{align}
\iota_Td\Psi=1.  \label{eq:tran}
\end{align}
On each torus $\Sigma_{\Psi_0}$ and $\Psi_0\in I$, we set the following.
\begin{enumerate}[label=\textup{(\roman*)}, leftmargin=*, itemsep=0.6em]
\item (Coordinates).
We choose normalized angle coordinates $\theta^j\in\mathbb{R}/\mathbb{Z}$ for $j=1,2$, and denote the normalized linearly independent period covectors by
$P^i=P_j^i d\theta^j$ for $i=1,2$, with some constants $P_j^i$ (The simplest choice is $P^{i}_{j}=\delta_{i,j}$ for which $P^{i}=d\theta^{i}$).

\item (Metric and measure).
We denote the
induced metric and the leafwise measure on $\Sigma_{_{\Psi_0}}$ by
\eq{
g_{_{\Psi_0}}=h_{ij}(\Psi_0,\theta^{1},\theta^{2})d\theta^i d\theta^j,
\qquad
d\Sigma_{\Psi_0}=\iota_TdV|_{\Sigma_{\Psi_0}}
=M(\Psi_0,\theta^{1},\theta^{2})d\theta^1\wedge d\theta^2. \label{eq:MM}
}
The measure $d\Sigma_{\Psi_0}$ is independent of the choice of such a vector field $T$.

\item (Elliptic operator).
We define the leafwise elliptic operator on $\Sigma_{\Psi_0}$ by
\eq{
L_{\Psi_0}
=
-\p_i\left(Q^{ij}\p_j\right),
\qquad
Q^{ij}=M h^{ij},  \label{eq:Ell}
}
where $\p_j=\p/\p\theta^j$ and $(h^{ij})=(h_{ij})^{-1}$. The operator \(L=L_{\Psi_0}\) is invertible on spaces of average zero functions on $\Sigma_{\Psi_0}$.
\end{enumerate}
\end{mydef}

\begin{mydef}[Weighted Hodge-star operator]
Let $dV_{g_{\Psi_0}}$ denote the volume form on $\Sigma_{\Psi_0}$ induced from the metric $g_{\Psi_0}$. We set the weighted Hodge-star operator by 
\eq{
\tilde{\ast}=f\ \ast:\Lambda^k(\Sigma_{\Psi_0})\to\Lambda^{2-k}(\Sigma_{\Psi_0}),\quad f=\frac{d\Sigma_{\Psi_0}}{dV_{g_{\Psi_0}}},
\label{eq:wHodgestar}}
where $\Lambda^k(\Sigma_{\Psi_0})$ denotes the space of $k$-forms on $\Sigma_{\Psi_0}$.
\end{mydef}

\begin{mydef}[Adapted coordinates]\label{def:adapted}
We set an adapted coordinate system on a foliated toroidal domain $\Omega_\Sigma$ by 
\begin{equation}
(\Psi,\theta^1,\theta^2)\in I\times (\mathbb{R}/\mathbb{Z})^{2}.
\label{eq:adopted}
\end{equation}
The Euclidean metric in this coordinates is expressed as  
\begin{equation}
g_{\mathrm{Euc}}
=
g_{\Psi\Psi}\,d\Psi^2
+
2g_{\Psi\theta^i}\,d\Psi\,d\theta^i
+
h_{ij}\,d\theta^i d\theta^j.  \label{eq:metric}
\end{equation}
\end{mydef}

The leafwise measure, metric, and elliptic operator depend smoothly
on the leaf parameter $\Psi_0\in I$. 
{When no confusion can arise, we use $\Psi$ also as the leaf parameter
and write $\Sigma_{\Psi}$, $g_{\Psi}$, $d\Sigma_{\Psi}$, and
$L_{\Psi}$.}

\subsection{Statement of the main results}\label{subsec:MR}

Grad's rigidity conjecture is partly motivated by the expected
harmonicity of steady Euler flows on Bernoulli surfaces
\cite[Sec.~8]{Grad67gc}. Our first result rigorously establishes this
property under the assumptions \eqref{eq:A1} and
\eqref{eq:A2}. Throughout the remainder of the paper,
$\Omega_\Sigma\subset\mathbb{R}^3$ denotes a smooth foliated
toroidal domain.

\begin{thm}[Weighted harmonicity]\label{thm:1}
Let $\lr{u,p}$ be a nowhere-vanishing $C^1$ solution of \eqref{SE} in $\Omega_{\Sigma}$. Assume that the conditions \eqref{eq:A1} and \eqref{eq:A2} hold. Then the tangential one-form $v=i^{\ast}u$ is a weighted harmonic one-form on each $\Sigma_{\Psi}$. Namely, 
\eq{
dv=0,\qquad d\ \tilde{\ast}\ v=0\qquad \text{\rm on}~~ \Sigma_{\Psi}.  \label{eq:wharmonic0}
}
\end{thm}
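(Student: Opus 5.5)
We split the claim into its two parts: closedness $dv=0$ comes from the momentum equation, and co-closedness $d\,\tilde{\ast}\,v=0$ comes from the divergence-free condition. Throughout we work in the adapted coordinates $(\Psi,\theta^1,\theta^2)$ of Definition~\ref{def:adapted}.

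\textbf{Step 1 ($dv=0$).} Pull back the momentum equation $\iota_{\bol u}du=-dp=-p'(\Psi)\,d\Psi$ to a leaf.
\begin{itemize}
\item Contracting with $\bol u$ is trivial, so instead we contract both sides with any vector $Y$ tangent to $\Sigma_\Psi$. This gives $du(\bol u,Y)=-p'(\Psi)\,\iota_Y d\Psi=0$.
\item By \eqref{eq:A1}, $\bol u$ itself is tangent to the leaf. Hence $i^\ast(du)=dv$ is a $2$-form on a $2$-dimensional surface that annihilates the pair $(\bol u,Y)$ for every tangent $Y$.
\item Since $\bol u$ is nowhere vanishing and tangent, we may choose $Y$ linearly independent of $\bol u$ at each point. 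Then $dv(\bol u,Y)=0$ forces $dv=0$ pointwise.
\end{itemize}
Regularity is not an issue here, because $u\in C^1$ makes $du$ continuous.

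\textbf{Step 2 ($d\,\tilde\ast\,v=0$).} This uses $d\iota_{\bol u}dV=0$. Because $\iota_{\bol u}d\Psi=0$ and $dV=d\Psi\wedge d\Sigma$ (with $d\Sigma=\iota_T dV$ extended off the leaf), we have
\[
\iota_{\bol u}dV=\iota_{\bol u}(d\Psi\wedge d\Sigma)=-\,d\Psi\wedge \iota_{\bol u}d\Sigma .
\]
Taking $d$ gives $0=d\Psi\wedge d(\iota_{\bol u}d\Sigma)$. Pulling back to a leaf, this says exactly that $d_{\Sigma_\Psi}\bigl(\iota_{\bol u}d\Sigma_\Psi\bigr)=0$; only the purely tangential $\theta^1\theta^2$-part of $d(\iota_{\bol u}d\Sigma)$ survives the wedge with $d\Psi$, which is the second relation in \eqref{eq:weighted-harmonic-axisym}. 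It remains to identify $\iota_{\bol u}d\Sigma_\Psi$ with $\tilde\ast v$:
\begin{itemize}
\item For a tangent vector field $\bol w$ on an oriented surface with area form $dV_g$, we have $\iota_{\bol w}dV_g=\ast\,\bol w^{\flat_g}$, the flat taken with respect to the induced metric.
\item Here $d\Sigma_\Psi=f\,dV_g$, so $\iota_{\bol u}d\Sigma_\Psi=f\ast(\bol u^{\flat_g})$.
\item Since $\bol u$ is tangent to the leaf, $\bol u^{\flat_g}=i^\ast(\bol u^\flat)=v$, because the induced metric is the restriction of the Euclidean one. Hence $\iota_{\bol u}d\Sigma_\Psi=\tilde\ast v$, and the claim follows.
\end{itemize}

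\textbf{Main obstacle.} The delicate point is justifying $d\Psi\wedge d(\iota_{\bol u}d\Sigma)=0\Rightarrow d_\Sigma(\iota_{\bol u}d\Sigma_\Psi)=0$ at the stated regularity.
\begin{itemize}
\item For $C^1$ fields this is a direct computation. In coordinates, $\iota_{\bol u}dV=J\,u^\theta$-type terms, with $u^\Psi=0$. The divergence identity becomes $\partial_{\theta^j}(M u^j)=0$, where $u^j=d\theta^j(\bol u)$ and $M$ is as in \eqref{eq:MM}.
\item The volume density in adapted coordinates is exactly $M$, since $\iota_T dV$ restricted to the leaf is $M\,d\theta^1\wedge d\theta^2$ and $\iota_Td\Psi=1$.
\item On the other side, $\tilde\ast v=M(u^1 d\theta^2-u^2 d\theta^1)$, whose leafwise exterior derivative is $\partial_j(Mu^j)\,d\theta^1\wedge d\theta^2$.
\end{itemize}
So I would write this coordinate check explicitly. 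It also confirms that the result is independent of the choice of $T$.
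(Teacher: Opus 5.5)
Your proof is correct, and it has the same two-part logic as the paper. The leafwise components of the momentum equation, together with $\bol u\neq 0$, give $dv=0$. Incompressibility, together with $\iota_{\bol u}d\Psi=0$, gives $d\,\tilde\ast\,v=0$.

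What differs is the execution.
\begin{itemize}
\item The paper first constructs local adapted isothermal coordinates $(\Psi,\chi,\phi)$ (Proposition~\ref{p:iso1}) and writes the Euler system in components (Lemma~\ref{l:SEisoA}). It reads off $u^\chi(\partial_\chi u_\phi-\partial_\phi u_\chi)=u^\phi(\partial_\chi u_\phi-\partial_\phi u_\chi)=0$ and $\partial_\chi(J\lambda^{-2}u_\chi)+\partial_\phi(J\lambda^{-2}u_\phi)=0$. It then identifies these with $dv$ and $d\,\tilde\ast\,v$ via $\ast d\chi=d\phi$ and $\ast d\phi=-d\chi$.
\item You argue intrinsically. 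You contract with a tangent $Y$ independent of $\bol u$, use the splitting $dV=d\Psi\wedge\iota_T dV$, and use the surface identity $\iota_{\bol w}dV_g=\ast\,\bol w^{\flat_g}$. You therefore need neither the isothermal-coordinate theorem nor the component form of the equations, and your check works in any adapted chart.
\end{itemize}

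Your route also makes the weight transparent: $f_\Psi=M/\sqrt{\det h}$. In isothermal coordinates this is $J/\lambda^2$, which is what the paper's final divergence expression actually uses. The intermediate $J^2\lambda^{-2}$ written in the paper's proof is a slip. The paper's coordinates do pay off later, in the isothermal form of the normal flux equation and in the axisymmetric reduction.

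Finally, the step you flag as the main obstacle is not delicate. Since $d\Psi\neq 0$ and $\ker d\Psi=T\Sigma_\Psi$, a $2$-form $\beta$ satisfies $d\Psi\wedge\beta=0$ if and only if $i^\ast\beta=0$ pointwise; this is linear algebra. Moreover $i^\ast d=d\,i^\ast$ holds for $C^1$ forms, so $u\in C^1$ suffices.
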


Our second result constructs two globally defined solenoidal vector
fields on $\Omega_\Sigma$ whose restrictions to each leaf form a
basis of the space of weighted harmonic one-forms.

\begin{thm}[Existence of a lifted weighted harmonic basis]\label{t:basis}
Let $\tilde{\mathcal{H}}^{1}(\Sigma_{\Psi})$ denote the space of weighted harmonic one-forms on $\Sigma_{\Psi}$. There exist two vector fields $\bol{\xi}^{i}$ for $i=1,2$ such that 
\begin{align}
d\iota_{\bol{\xi}^{i}}dV=0,\quad \iota_{\bol{\xi}^{i}}d\Psi=0,\quad \textrm{span} \{i^{*}\bol{\xi}^{1\flat},i^{*}\bol{\xi}^{2\flat}\}=\tilde{\mathcal{H}}^{1}(\Sigma_{\Psi}), \label{eq:xicondition}
\end{align}
where $\bol{\xi}^{i\flat}=g_{\textrm{Euc}}(\bol{\xi}^{i},\cdot)$ is the metric-dual to $\bol{\xi}^{i}$.
\end{thm}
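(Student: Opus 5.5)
On each leaf $\Sigma_\Psi$ I will construct the weighted harmonic basis explicitly, and then lift it to a tangential, divergence-free vector field on $\Omega_\Sigma$ using the adapted coordinates $(\Psi,\theta^1,\theta^2)$ of Definition~\ref{def:adapted}.

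\textbf{Step 1: leafwise construction.} Work in the normalized angle coordinates and take the period covectors $P^i=P^i_j d\theta^j$. For each $i$ look for a weighted harmonic form of the shape
\[
v^i=P^i+d_{\Sigma_\Psi}\phi^i .
\]
This form is automatically closed. The condition $d\,\tilde{\ast}\,v^i=0$ is equivalent to a leafwise elliptic equation. To see this, note that in coordinates $\tilde\ast\, w = f\ast w$ for $w=w_j d\theta^j$ gives
\[
\tilde\ast\, w=M h^{jk}w_k\,\epsilon_{jl}\,d\theta^l
\]
up to sign, since $f\sqrt{\det h}=M$. Hence
\[
d\,\tilde\ast\, w=\pm\,\p_j(Q^{jk}w_k)\,d\theta^1\wedge d\theta^2 .
\]
The condition therefore becomes
\[
L_\Psi\phi^i=\p_j(Q^{jk}P^i_k).
\]
The right-hand side is a divergence, so it has zero average. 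By the invertibility of $L_\Psi$ on mean-zero functions (Definition~\ref{n:lg}(iii)) there is a unique mean-zero solution $\phi^i$. Since $M$, $h_{ij}$ and the coordinates depend smoothly on $\Psi$, smooth parameter dependence for elliptic problems gives $\phi^i$ smooth in $(\Psi,\theta)$.

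\textbf{Step 2: basis property.} The dimension count for $\tilde{\mathcal H}^1(\Sigma_\Psi)$ goes as follows. A weighted harmonic form with zero periods is exact, say $d\phi$, and satisfies $L_\Psi\phi=0$. Then $\phi$ is constant, so the form is $0$. The period map is therefore injective into $\mathbb{R}^2$, and it is surjective by Step 1. The forms $v^1,v^2$ have linearly independent periods $P^1,P^2$, so they span $\tilde{\mathcal H}^1(\Sigma_\Psi)$.

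\textbf{Step 3: lifting.} Define $\bol{\xi}^i$ as the vector field tangent to the leaves whose leafwise component is the weighted dual of $v^i$:
\[
\bol{\xi}^i=\frac{1}{M}\,\epsilon^{jk}(\tilde\ast v^i)_k\,\p_{\theta^j}
\]
up to sign. Equivalently, $\iota_{\bol{\xi}^i}d\Sigma_\Psi=\tilde\ast v^i$ on each leaf. Since $\bol{\xi}^i$ has no $\p_\Psi$ component, $\iota_{\bol{\xi}^i}d\Psi=0$.

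For the divergence, use $dV=M\,d\Psi\wedge d\theta^1\wedge d\theta^2$ in adapted coordinates. Since $\iota_{\bol{\xi}^i}d\Psi=0$, we get
\[
\iota_{\bol{\xi}^i}dV=-d\Psi\wedge\tilde\ast v^i ,
\]
where $\tilde\ast v^i$ is viewed as a $\Psi$-dependent one-form in $d\theta$. Then
\[
d\iota_{\bol{\xi}^i}dV=d\Psi\wedge d_\theta(\tilde\ast v^i)=0 ,
\]
because any $\p_\Psi$ terms are killed by $d\Psi\wedge$ and $d_\theta\tilde\ast v^i=0$ by Step 1.

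\textbf{Step 4: matching $i^*\bol{\xi}^{i\flat}$ with $v^i$ (main obstacle).} One must check that $i^*\bol{\xi}^{i\flat}$ equals $v^i$. Since $\bol{\xi}^i$ is tangent to the leaves, its flat restricted to the leaf is $h_{jk}\xi^k d\theta^j$, and this cross-term issue with $g_{\Psi\theta^i}$ in \eqref{eq:metric} is harmless.

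The real question is consistency between two definitions. One is the leafwise measure $M$ defined via $\iota_T dV$; the other is the metric-dual convention. I expect that $\bol{\xi}^i$ should instead be defined as the leafwise metric dual,
\[
\xi^k=h^{kj}v^i_j .
\]
Then the divergence computation becomes
\[
d\iota_{\bol{\xi}^i}dV=d\Psi\wedge\p_k(M h^{kj}v^i_j)\,d\theta^1\wedge d\theta^2 .
\]
This vanishes exactly by the weighted harmonic equation $\p_k(Q^{kj}v^i_j)=0$.

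Adopting this definition makes Steps 3 and 4 coincide. The remaining care is in verifying that the coordinate formula for $\tilde\ast$ indeed yields $Q^{ij}=Mh^{ij}$, which rests on $f=M/\sqrt{\det h}$, and in tracking signs and orientation.
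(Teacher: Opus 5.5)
Your proposal is correct and follows the paper's proof. You use the same ansatz $\tau^i=(P^i_j+\partial_jF^i)\,d\theta^j$, the same leafwise equation $LF^i=\partial_k(Q^{kl}P^i_l)$, linear independence via periods, and the same lift $\boldsymbol{\xi}^i=h^{jm}\tau^i_j\partial_m$. Your explicit computation $d\iota_{\boldsymbol{\xi}^i}dV=d\Psi\wedge\partial_k(Q^{kj}\tau^i_j)\,d\theta^1\wedge d\theta^2$ supplies a step the paper only asserts.

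The only real deviation is the dimension count. You bound $\dim\tilde{\mathcal{H}}^1(\Sigma_\Psi)\le 2$ directly, by showing the period map is injective, instead of quoting the weighted Hodge theorem and the K\"unneth formula; this is more self-contained. Your Step 4 concern is also unfounded: for leaf-tangent $X$ in two dimensions, $\iota_X d\Sigma_\Psi=\tilde{\ast}\,(X^\flat)$, so your two definitions of $\boldsymbol{\xi}^i$ coincide.
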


We can now state the main result of this paper. We regard both the
flux function $\Psi$, which determines the toroidal foliation, and
the steady Euler flow $(\bol{u},p)$ as unknown. Under the assumptions
\eqref{eq:A1} and \eqref{eq:A2}, we obtain an
explicit characterization of the foliation structure without
imposing any Euclidean symmetry. Specifically, the velocity field is
represented globally in terms of a lifted basis of weighted harmonic
one-forms, while the normal component of the Euler equations reduces
to a single scalar equation for the flux function.

\begin{thm}
[Foliation structure]\label{thm:lnGS}
Let $\Omega_{\Sigma}\subset \mathbb{R}^{3}$ be a smooth foliated toroidal domain with the flux function $\Psi$. Let $(u,p)$ be a nowhere-vanishing
$C^1$ solution of the steady Euler equations \eqref{SE} in $\Omega_{\Sigma}$. Let $\bol{\xi}^{i}$ be the vector fields satisfying \eqref{eq:xicondition}. Assume that the conditions \eqref{eq:A1} and \eqref{eq:A2} hold. Then the following hold in $\Omega_{\Sigma}$:
\begin{itemize}
\item[(i)] (Tangential flow representation). There exist functions $c_i=c_i(\Psi)$ for $i=1,2$ such that $\bol{u}$ is globally expressed as 
\eq{
\bol{u}
=
c_1\lr{\Psi}\bol{\xi}^1+c_2\lr{\Psi}\bol{\xi}^2.
\label{harmu1}
}
For some particular choice of $\bol{\xi}^{i}$, the representation can be written explicitly as  
\begin{align}
\bol{u}=h^{ij}U_j\partial_i,\quad U_j
=c_i(\Psi)
\left(
P_j^i
+
\p_j
L^{-1}
\p_k(Q^{kl}P_l^i)
\right).  \label{UA}
\end{align}
\item[(ii)] (Normal flux equation).
The flux function $\Psi$ satisfies the single scalar equation 
\eq{
{
h^{ij}U_j
\left(
\p_{\Psi}U_i
-
\p_i
\left(
g_{\Psi \theta^k}h^{kl}U_l
\right)
\right)
=p'(\Psi).
}
\label{lnGS}
}
\end{itemize}
Conversely, for prescribed functions $c_1(\Psi)$, $c_2(\Psi)$, and
$p(\Psi)$, any flux function $\Psi$ satisfying \eqref{lnGS} yields a
solution of \eqref{SE} satisfying \eqref{eq:A1}-\eqref{eq:A2}, with
$\bol{u}$ defined by \eqref{UA}.
\end{thm}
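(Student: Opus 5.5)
Part (i) follows from Theorem~\ref{thm:1} and Theorem~\ref{t:basis}; parts (ii) and the converse come from splitting \eqref{SE} in the adapted coordinates \eqref{eq:adopted} into leafwise and normal components.

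\textbf{Step 1: representation on each leaf.} By Theorem~\ref{thm:1}, $v=i^*u$ lies in $\tilde{\mathcal H}^1(\Sigma_\Psi)$ on every leaf. By Theorem~\ref{t:basis}, $i^*\bol\xi^{1\flat}$ and $i^*\bol\xi^{2\flat}$ span this two-dimensional space. Hence there are leafwise constants $c_i(\Psi)$ with $v=c_i(\Psi)\,i^*\bol\xi^{i\flat}$; they are obtained by matching the periods of $v$ along the two cycles dual to $d\theta^1,d\theta^2$.

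\textbf{Step 2: lifting to $\bol u$ and regularity of $c_i$.} Since $\bol u$ and $\bol\xi^i$ are tangent to the leaves, each vector is determined by its pulled-back covector through $h^{ij}$. Therefore \eqref{harmu1} holds pointwise in $\Omega_\Sigma$. The $c_i$ are $C^1$ in $\Psi$ because periods of a $C^1$ one-form over smoothly varying cycles are $C^1$. No constraint on the $c_i$ arises from divergence-freeness: $d\iota_{\bol\xi^i}dV=0$ and $\iota_{\bol\xi^i}d\Psi=0$ give $d\iota_{c_i\bol\xi^i}dV=c_i'\,d\Psi\wedge\iota_{\bol\xi^i}dV=0$.

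\textbf{Step 3: the explicit form \eqref{UA}.} The weighted harmonic representative with periods $P^i$ has the form $P^i+d\chi$. The closedness $dv=0$ is automatic. The coclosedness $d\tilde\ast(P^i+d\chi)=0$ becomes, with $Q^{ij}=Mh^{ij}$, the equation $\p_k(Q^{kl}(P^i_l+\p_l\chi))=0$. Its solution is $\chi=L^{-1}\p_k(Q^{kl}P^i_l)$, which exists because $L$ is invertible on mean-zero functions and the right-hand side is a divergence, hence has zero mean. With $M=\det$-type density, the contravariant vector $h^{ij}U_j\p_i$ in adapted coordinates has zero $\p_\Psi$-component, and its flux form is $d$-closed. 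So this choice of $\bol\xi^i$ satisfies \eqref{eq:xicondition}.

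\textbf{Step 4: the normal equation (ii).} Compute $\iota_{\bol u}du=-dp=-p'(\Psi)\,d\Psi$ in adapted coordinates, where $u=\bol u^\flat$.
\begin{itemize}
\item Using \eqref{eq:metric} and $u^\Psi=0$, the covariant components are $u_\Psi=g_{\Psi\theta^k}h^{kl}U_l$ and $u_{\theta^i}=U_i$.
\item Since $u^\Psi=0$, $(\iota_{\bol u}du)_\Psi=u^i(\p_i u_\Psi-\p_\Psi u_i)$.
\item For the tangential components, $(\iota_{\bol u}du)_{\theta^j}=u^i(\p_iU_j-\p_jU_i)$. This vanishes because each leafwise $U$ is closed (Step 1).
\item Equating the $d\Psi$ component to $-p'$ gives exactly \eqref{lnGS}.
\end{itemize}

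\textbf{Converse.} Given $c_i$, $p$, and $\Psi$ satisfying \eqref{lnGS}, define $\bol u$ by \eqref{UA}. Then:
\begin{itemize}
\item it is divergence-free and tangent to the leaves by Step 2;
\item $\iota_{\bol u}du$ has vanishing tangential part by leafwise closedness;
\item its normal part equals $-p'(\Psi)d\Psi$ by \eqref{lnGS}.
\end{itemize}
Hence \eqref{SE}, \eqref{eq:A1} and \eqref{eq:A2} hold.

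\textbf{Main obstacle.} I expect the hardest part to be Step 3: checking that the explicit $\bol\xi^i$ is genuinely divergence-free in three dimensions, not just on each leaf. The flux form $\iota_{\bol\xi}dV$ involves $\sqrt{\det g_{\mathrm{Euc}}}$ rather than $M$, so I must verify that $d\Sigma_\Psi=\iota_TdV$ makes the weight $M$ coincide with the correct three-dimensional density. I would check this by writing $dV=d\Psi\wedge d\Sigma_\Psi$, so that $\iota_{\bol\xi}dV=-d\Psi\wedge\iota_{\bol\xi}d\Sigma_\Psi$. Its exterior derivative then reduces to the leafwise divergence $\p_i(Mh^{ij}U_j)$, which vanishes by Step 3.
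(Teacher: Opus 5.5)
Your proposal is correct and follows essentially the same route as the paper: expand $v=i^*u$ in the lifted weighted harmonic basis from Theorems~\ref{thm:1} and~\ref{t:basis}, recover $\bol{u}$ by raising the index with $h^{ij}$ (the paper phrases this as $\bol{w}=\bol{u}-c_i\bol{\xi}^i$ satisfying $i^*w=0$, hence $\bol{w}=0$), and read off \eqref{lnGS} as the $d\Psi$-component of $\iota_{\bol{u}}du=-p'(\Psi)\,d\Psi$ in adapted coordinates with $T=\partial_\Psi$. Your additional checks fill in points the paper only asserts or leaves implicit: three-dimensional solenoidality of $\bol{\xi}^i$ via $dV=d\Psi\wedge d\Sigma_\Psi$, $C^1$ regularity of $c_i$ via periods, and the converse via vanishing of the tangential components of $\iota_{\bol{u}}du$ by leafwise closedness.
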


\begin{remark}
If the angle coordinates $(\theta^{1},\theta^{2})$ in Definition \ref{n:lg} are isothermal, the normal flux equation \eqref{lnGS} takes the simpler form 
\begin{align}
-\mf{L}_{\bol{u}} u_{\Psi}
+
\displaystyle\frac{\abs{\bol{u}}}{\lambda}
\frac{\p\lr{\lambda\abs{\bol{u}}}}{\p\Psi}
=
p'(\Psi),  \label{eq:lnGSiso0}
\end{align}
where $\mf{L}_{\bol{u}}$ denotes the Lie derivative along  $\bol{u}$, $u_{\Psi}$ denotes the corariant $\Psi$-component of $\bol{u}$, and $\lambda=\lambda(\Psi,\theta^{1},\theta^{2})$ is the conformal factor of the isothermal coordinates $(\theta^{1},\theta^{2})$. See Lemma \ref{l:isonlGS}.
\end{remark}

Theorem~\ref{thm:lnGS} provides the first decomposition of the steady Euler equations \eqref{SE} into a tangential flow representation and a single scalar equation for the flux function on arbitrary invariant-torus foliations under the assumptions \eqref{eq:A1} and \eqref{eq:A2}. It complements Arnold's classical structure theorem (Theorem~\ref{thm:Arnold}) by providing an explicit characterization of admissible invariant-torus foliations, thereby answering Question~\ref{q0} within this class. Indeed, the tangential flow representation \eqref{harmu1} and the normal flux equation \eqref{lnGS} reduce to the Clebsch representation \eqref{Clebsch} and the Grad--Shafranov equation \eqref{GSeq0}, respectively, in the axisymmetric setting; see Table~\ref{tab:GS-comparison}.

\begin{thm}[Axisymmetric case]\label{t:axisymmetric}
Assume that $\Omega_{\Sigma}$ and $(u,p)$ are axisymmetirc with the cylindrical
coordinates $(r,\phi,z)$ in Theorem \ref{thm:lnGS}. Then, the vector fields
\begin{align}
\bol{\xi}^1=\nabla\Psi\times\nabla\phi,
\qquad
\bol{\xi}^2=\nabla\phi,  \label{eq:xibasis}
\end{align}
satisfy \eqref{eq:xicondition}. If $c_1(\Psi)=1$ and $c_2(\Psi)=\alpha(\Psi)$, the Clebsch representation \eqref{Clebsch} and the Grad--Shafranov equation \eqref{GSeq0} hold in $\Omega_{\Sigma}$.
\end{thm}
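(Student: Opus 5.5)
The plan is to check the three conditions in \eqref{eq:xicondition} for $\bol{\xi}^1=\nabla\Psi\times\nabla\phi$ and $\bol{\xi}^2=\nabla\phi$ directly, and then to obtain \eqref{Clebsch} from Theorem~\ref{thm:lnGS}(i) and \eqref{GSeq0} from Theorem~\ref{thm:lnGS}(ii).

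\textbf{Conditions \eqref{eq:xicondition}.} Tangency $\iota_{\bol{\xi}^i}d\Psi=0$ is immediate. For $\bol{\xi}^1$ it holds because $\nabla\Psi\cdot(\nabla\Psi\times\nabla\phi)=0$. For $\bol{\xi}^2$ it holds because $\Psi=\Psi(r,z)$, so $\nabla\Psi\cdot\nabla\phi=0$. For the divergence:
\begin{itemize}
\item $\nabla\cdot(\nabla\Psi\times\nabla\phi)=0$ since both factors are gradients;
\item $\nabla\cdot\nabla\phi=\Delta\phi=0$ away from the axis, and the hollow domain $\Omega_\Sigma$ avoids the axis.
\end{itemize}

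\textbf{Spanning property.} I use adapted coordinates $\theta^1$ (poloidal angle on the leaf) and $\theta^2=\phi$. Since $|\nabla\phi|^2=r^{-2}$, the pullback $i^*\bol{\xi}^{2\flat}=i^*(r^{-2}\cdot r^2 d\phi)=d\phi$ is closed.
\begin{itemize}
\item For $\bol{\xi}^1$, I note that $\iota_{\bol{\xi}^1}dV=d\Psi\wedge d\phi$. Hence $\iota_{\bol{\xi}^1}d\Sigma_\Psi$ restricts to $\pm d\phi$ on the leaf, which is closed; this is the co-closedness condition $d\tilde\ast v=0$.
\item Closedness of $i^*\bol{\xi}^{1\flat}$: this one-form is $|\nabla\Psi|^2 r^{-2}\,g(\partial_{\theta^1},\cdot)$-type, depends only on $\theta^1$ along the poloidal direction and has no $d\phi$ component (since $\bol{\xi}^1\perp\nabla\phi$). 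It therefore has the form $a(\theta^1)\,d\theta^1$, which is closed.
\item Similarly $\tilde\ast\,d\phi$ is proportional to $d\theta^1$ with a $\phi$-independent coefficient, hence closed, so $\bol{\xi}^2$ is weighted harmonic as well.
\item Linear independence holds because the two fields have nonzero poloidal and toroidal periods respectively. Since $\tilde{\mathcal H}^1$ is two-dimensional (weighted Hodge theory on $\mathbb T^2$, as in Theorem~\ref{t:basis}), the two restrictions span it.
\end{itemize}

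\textbf{Clebsch representation.} Theorem~\ref{thm:lnGS}(i) gives $\bol{u}=c_1(\Psi)\bol{\xi}^1+c_2(\Psi)\bol{\xi}^2$. The normalization $c_1=1$ can be absorbed by reparametrizing the flux label $\Psi\mapsto\tilde\Psi(\Psi)$ with $\tilde\Psi'=c_1$. With $c_1=1$ and $c_2=\alpha$ this is exactly \eqref{Clebsch}.

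\textbf{Grad--Shafranov equation.} Rather than specializing \eqref{lnGS} through the metric coefficients, I compute the normal component directly in vector form, which is equivalent to \eqref{lnGS} by Theorem~\ref{thm:lnGS}(ii). Using the standard identities
\[
\nabla\times(\nabla\Psi\times\nabla\phi)=-\Delta^*\Psi\,\nabla\phi,
\qquad
\nabla\times(\alpha\nabla\phi)=\alpha'\,\nabla\Psi\times\nabla\phi,
\]
the vorticity is $\nabla\times\bol u=-\Delta^*\Psi\,\nabla\phi+\alpha'\,\nabla\Psi\times\nabla\phi$. Then
\[
(\nabla\times\bol u)\times\bol u=-r^{-2}\lr{\Delta^*\Psi+\alpha\alpha'}\nabla\Psi .
\]
This uses $(\nabla\Psi\times\nabla\phi)\times\nabla\phi=-r^{-2}\nabla\Psi$ and the vanishing of the cross products of parallel vectors. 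Equating with $-\nabla p=-p'(\Psi)\nabla\Psi$ and dividing by $|\nabla\Psi|\neq0$ gives \eqref{GSeq0}.

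The main obstacle is the bookkeeping with signs, orientation and the normalization $c_1=1$. In particular I expect to invoke the reparametrization of $\Psi$, or to read the statement as assuming the normalized choice. I will also need to confirm that \eqref{lnGS} with $g_{\Psi\theta^k}$ reproduces the same scalar equation; I would sidestep this via the equivalence in Theorem~\ref{thm:lnGS}.
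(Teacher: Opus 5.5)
Your proposal is correct, but it reaches the result by a different route from the paper, most visibly in the Grad--Shafranov step.

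\textbf{The paper's route.} The paper works entirely in the adapted isothermal chart $(\Psi,\chi,\phi)$ with $\partial_s\chi=1/r$, so that $\lambda=r$ and $J=r^2/|\nabla\Psi|$. It writes $\bol{\xi}^1=-|\nabla\Psi|r^{-2}\partial_\chi$ and $\bol{\xi}^2=r^{-2}\partial_\phi$, whose leaf restrictions are $-|\nabla\Psi|\,d\chi$ and $d\phi$. It then checks \eqref{eq:divfreeiso}: the weighted components are $-1$ and $1/|\nabla\Psi|$, independent of $\chi$ and $\phi$ respectively. It obtains \eqref{GSeq0} by evaluating the isothermal normal flux equation \eqref{eq:lnGSiso} term by term. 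Proposition~\ref{p:expressions}, which computes $du$ in two ways, identifies $r^2u^\chi(\partial_\Psi u_\chi-\partial_\chi u_\Psi)=\Delta^*\Psi$ and $r^2u^\phi\partial_\Psi u_\phi=\alpha\alpha'$.

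\textbf{Your route.} You verify \eqref{eq:xicondition} almost coordinate-free. The identity $\iota_{\bol{\xi}^1}dV=d\Psi\wedge d\phi$ gives at once solenoidality and $\tilde{\ast}\,i^{*}\xi^{1}=\iota_{\bol{\xi}^1}d\Sigma_\Psi=-d\phi$. Rotation invariance gives closedness. The period matrix gives linear independence, which the paper leaves implicit. You then derive \eqref{GSeq0} by computing the Lamb vector $(\nabla\times\bol{u})\times\bol{u}=-r^{-2}(\Delta^*\Psi+\alpha\alpha')\nabla\Psi$ directly from the Euler equations, which are hypotheses here.

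\textbf{What each buys.} Your argument is shorter and avoids the arc-length construction of $\chi$ and the Jacobian bookkeeping. It also shows that the tangential components of the Euler equation vanish identically, and it does not need Theorem~\ref{thm:lnGS}(ii) at all. What it does not display is the point the paper makes with this theorem: that the general normal flux equation \eqref{lnGS} literally specializes to the Grad--Shafranov operator. In your argument that link rests only on the remark that \eqref{lnGS} is the normal component $\iota_{\bol{u}}\iota_T du=p'(\Psi)$ of the Euler equation.

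\textbf{Minor points.} State explicitly that the poloidal angle $\theta^1$ is chosen rotation-invariant (e.g.\ the paper's $\chi$), since your closedness arguments for $i^{*}\xi^{1}$ and $\tilde{\ast}\,d\phi$ depend on it. You can also drop the reparametrization of $\Psi$, since $c_1=1$ is a hypothesis of the statement.
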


\begin{table}[h]
\centering
\fontsize{10}{12}\selectfont
\setlength{\tabcolsep}{2pt}
\setlength{\extrarowheight}{1pt}
\renewcommand{\arraystretch}{1.75}

\begin{tabular}{
@{}
>{\raggedright\arraybackslash\bfseries}p{0.2\textwidth}
>{\centering\arraybackslash}p{0.34\textwidth}
>{\centering\arraybackslash}p{0.30\textwidth}
@{}
}
\toprule
&
\textbf{Foliation structure\hspace{15pt}
(Theorem \ref{thm:lnGS})}
&
\textbf{Axisymmetric case (Theorem \ref{t:axisymmetric})}
\\
\midrule

Tangential flow 
representation
&
\(
\begin{aligned}[t]
\bol{u}
=
c_1\lr{\Psi}\bol{\xi}^1
+
c_2\lr{\Psi}\bol{\xi}^2
\end{aligned}
\)
&
\(
\begin{aligned}[t]
\bol{u}
&=
\nabla\Psi\times\nabla\phi
+
\alpha\lr{\Psi}\nabla\phi \\
&(\textrm{Clebsch representation})
\end{aligned}
\)
\\
\addlinespace[3pt]

Normal flux equation
&
\(
\begin{aligned}[t]
&-\mf{L}_{\bol{u}} u_{\Psi}
+
\displaystyle\frac{\abs{\bol{u}}}{\lambda}
\frac{\p\lr{\lambda\abs{\bol{u}}}}{\p\Psi}
=
p'(\Psi) \\
&\hspace{7pt} (\textrm{with isothermal coordinates})
\end{aligned}
\)
&
\(
\begin{aligned}[t]
&\Delta^\ast\Psi+\alpha(\Psi)\alpha'(\Psi)=r^2p'(\Psi) \\
&\hspace{3pt} (\textrm{Grad--Shafranov equation})
\end{aligned}
\)
\\
\bottomrule
\end{tabular}
\caption{Foliation structure of general steady Euler flows and axisymmetric case in Theorems \ref{thm:lnGS} and \ref{t:axisymmetric}.}
\label{tab:GS-comparison}
\end{table}

\begin{remark}
The tangential flow representation
\begin{align}
\bol{u}=c_1(\Psi)\nabla \Psi\times \nabla \phi+c_2(\Psi)\nabla \phi \label{Clebsch2}
\end{align}
is more intrinsic than the normalized Clebsch representation \eqref{Clebsch} {with $\alpha=\alpha\lr{\Psi}$}. If $c_1(\Psi)\neq 0$, we can normalize the coefficient $c_1(\Psi)$ by choosing a function $\Phi(\Psi)$ such that $\Phi'(\Psi)=c_1(\Psi)$. Using its inverse function $\Psi=\Psi(\Phi)$, we rewrite \eqref{Clebsch2} in the normalized form
\begin{align*}
\bol{u}=\nabla \Phi\times \nabla \phi+c_2(\Psi(\Phi))\nabla \phi,
\end{align*}
with the new flux function $\Phi$.
\end{remark}

\begin{remark}[Foliations of Beltrami flows]
It is worth noting that steady Euler flows with various foliation structures have been constructed in the class of Beltrami flows \cite{EPS12,EPS15}, namely, solutions to \eqref{SE} with constant pressure; see also Question~\ref{q0}. Such flows are obtained by solving
\begin{align}
\nabla\times\bol{u}=f\bol{u}.
\label{eq:Beltrami}
\end{align}
Unlike the steady Euler equations \eqref{SE}, the Beltrami equation \eqref{eq:Beltrami} can be formulated as an evolution equation in the direction normal to the flux surfaces \cite{EPS12,EPS15}.

The analogy becomes even closer for steady Euler flows with
nonconstant pressure when the proportionality factor $f$ is
nonconstant. In this case, the level sets $\{f=f_0\}$ play a role analogous to the Bernoulli surfaces $\{p=p_0\}$; see, for example, \cite[II]{ArnoldKh}. Enciso and Peralta-Salas \cite{EPS16} proved that the restriction of a Beltrami field to each regular level set $\{f=f_0\}$ is a weighted harmonic one-form, while its dependence in the normal direction is still governed by an evolution equation; see also \cite{Abe2022,Abe22}.

By contrast, Theorem~\ref{thm:lnGS} shows that, for the class of
steady Euler flows considered here, the normal dependence is
completely characterized by the normal flux equation
\eqref{lnGS}, rather than by an evolution equation.
\end{remark}

\if{
\begin{itemize}
\item (iii) solutions arising in modified or anisotropic-pressure models, where the
right-hand side of the force-balance equation in \eqref{SE} is replaced by a
more general pressure force. With regard to (iii),
the work \cite{Sato23} shows that introducing an anisotropy in the form of an
integrating factor, so that the force-balance equation takes the form
$\iota_{\bol{u}}du=-\lambda(\bol{x})dp$, 
is sufficient to produce smooth equilibria in hollow toroidal volumes of
general geometry.

Outside the preceding classes, available existence results typically rely either
on weaker notions of regularity or on modified forms of the equilibrium
equations. Relevant recent examples include Sobolev-regular equilibria,
typically with \(\bol{u}\in H^1\), obtained through relaxation schemes, such as
the Voigt--MHD system of \cite{Pasqualotto23} and the resistive
magnetic-relaxation equations with random forcing studied in \cite{Abe26};
H\"older-continuous steady Euler flows with prescribed topology, constructed
through the topology-preserving convex integration scheme of
\cite{EncisoPenafielTomasPeraltaSalas2025TopologyPreserving};
stepped-pressure equilibria \cite{Bruno96,Dewar15,Enciso25}; smooth equilibria
obtained by adding an external force to the force-balance equation
\cite{Constantin21}; smooth solutions obtained by relaxing the incompressibility
constraint on \(\bol{u}\) \cite{Sato25}; solutions constructed after allowing
more general ambient metric tensors \cite{Cardona25}; and iterative schemes
based on multivalued Clebsch potentials \cite{Sato24}. For comparison, we also
note the recent result \cite{PSPP26} showing that, even for rotationally
symmetric solid tori, minimizers of Woltjer's variational principle,
equivalently first Amp\`erian curl eigenfields solving
\(\nabla\times \bol{B}=\lambda \bol{B}\) with the appropriate boundary
conditions, need not inherit the rotational symmetry of the domain.
\end{itemize}
}\fi

\subsection{Ideas of the proof}

We briefly outline the proofs of
Theorems~\ref{thm:1}--\ref{thm:lnGS}, and \ref{t:axisymmetric}.
We first establish the weighted harmonicity
\eqref{eq:wharmonic0} of the velocity field on each invariant
torus. For this purpose, we construct local isothermal coordinates
$(\theta^1,\theta^2)=(\chi,\phi)$ on each leaf $\Sigma_\Psi$ and
the corresponding foliation-adapted coordinates
$(\Psi,\chi,\phi)$ in $\Omega_\Sigma$. These coordinates simplify
the metric computations and allow us to derive a local component
form of the Euler equations. We then interpret each leaf as a
Bakry--\'Emery manifold and deduce the weighted harmonicity from the
tangential components of the Euler equations.

The main step is the construction of the lifted basis in
Theorem~\ref{t:basis}. We seek smoothly varying weighted
harmonic one-forms $\tau^i=\tau^i(\Psi)$, $i=1,2$, in the form
\[
\tau^i=(P^i_j+\partial_jF^i)d\theta^j,
\]
where $P^i=P^i_jd\theta^j$ are linearly independent period covectors.
The forms $\tau^i$ are automatically closed. Their weighted
co-closedness reduces to a leafwise elliptic equation
\[
L F^i=\partial_k(Q^{kl}P^i_l).
\]
Since $L$ is invertible on average-zero functions, this equation
determines a smoothly varying basis of weighted harmonic one-forms.
Taking the metric duals and lifting them through the foliation yields
the desired solenoidal vector fields $\bol{\xi}^1$ and
$\bol{\xi}^2$.

Once this lifted basis has been constructed, the tangential flow
representation follows by expanding the tangential velocity one-form
in that basis. Substituting this representation into the normal
component of the Euler equation,
\[
\iota_{\bol{u}}\iota_Tdu=p'(\Psi),
\]
then yields the normal flux equation \eqref{lnGS}.\\

We conclude this introduction by mentioning a related work of
K.~de Lacy \cite{deLacy2026}, who also exploits a weighted
harmonic property closely related to Theorem~\ref{thm:1} in the
study of MHS equilibrium approximation.

\subsection{Organization of this paper}

The remainder of this paper is organized as follows.
In Section~\ref{sec:red}, we construct foliation-adapted isothermal
coordinates and derive a component representation of the equations
\eqref{SE}. In Section~\ref{sec:harm}, we introduce weighted harmonic
one-forms on closed Bakry--\'Emery manifolds and prove that the
restriction of the velocity one-form to each invariant torus is
weighted harmonic (Theorem~\ref{thm:1}). In Section \ref{s4}, we construct the lifted weighted harmonic vector fields (Theorem~\ref{t:basis}). In
Section~\ref{sec:global}, we derive the tangential flow representation
\eqref{harmu1} and the normal flux equation \eqref{lnGS}, thereby
proving Theorem~\ref{thm:lnGS}. We also show the representation \eqref{eq:lnGSiso0} in the isothermal coordinates and the reduction in the axisymmetric case (Theorem \ref{t:axisymmetric}).

\subsection{Acknowledgments} 
N.S. would like to acknowledge helpful discussions with K. de Lacy during his visit to NIFS, as well as with D. Pfefferlé and D. Perrella. 
The research of N.S. was partially supported by JSPS KAKENHI Grant 
No. 25K07267, No. 22H04936, and No. 24K00615. 
The research of K.A. was partially supported by the JSPS through the Grant in Aid for Scientific Research (C) 24K06800, MEXT Promotion of Distinctive Joint Research Center Program JPMXP0723833165, and Osaka Metropolitan University Strategic Research Promotion Project (Development of International Research Hubs).

\section{The foliation-adapted isothermal
coordinates}\label{sec:red}

We construct a local coordinate system adapted to the toroidal
foliation of the domain $\Omega_{\Sigma}$. The construction is based on local
isothermal coordinates on the leaves $\Sigma_{\Psi}$. We refer to, e.g., \cite{doCarmo,Lee2012} for background on embedded submanifolds, induced metrics, and regular level
sets.

\subsection{Coordinate construction}\label{subsec:iso}

Let \(\Omega_{\Sigma}\subset\mathbb{R}^3\) be a foliated toroidal
domain in Definition \ref{def:marked-foliation} with a smooth submersion $\Psi$. We take a coordinate system $(\Psi,\chi,\phi)$ defined on local regions in $\Omega_{\Sigma}$ such that $(\chi,\phi)$ is isothermal on each local leaf slice; see Figure ~\ref{fig2}.

\begin{figure}[h]
\centering
\includegraphics[width=0.5\textwidth]{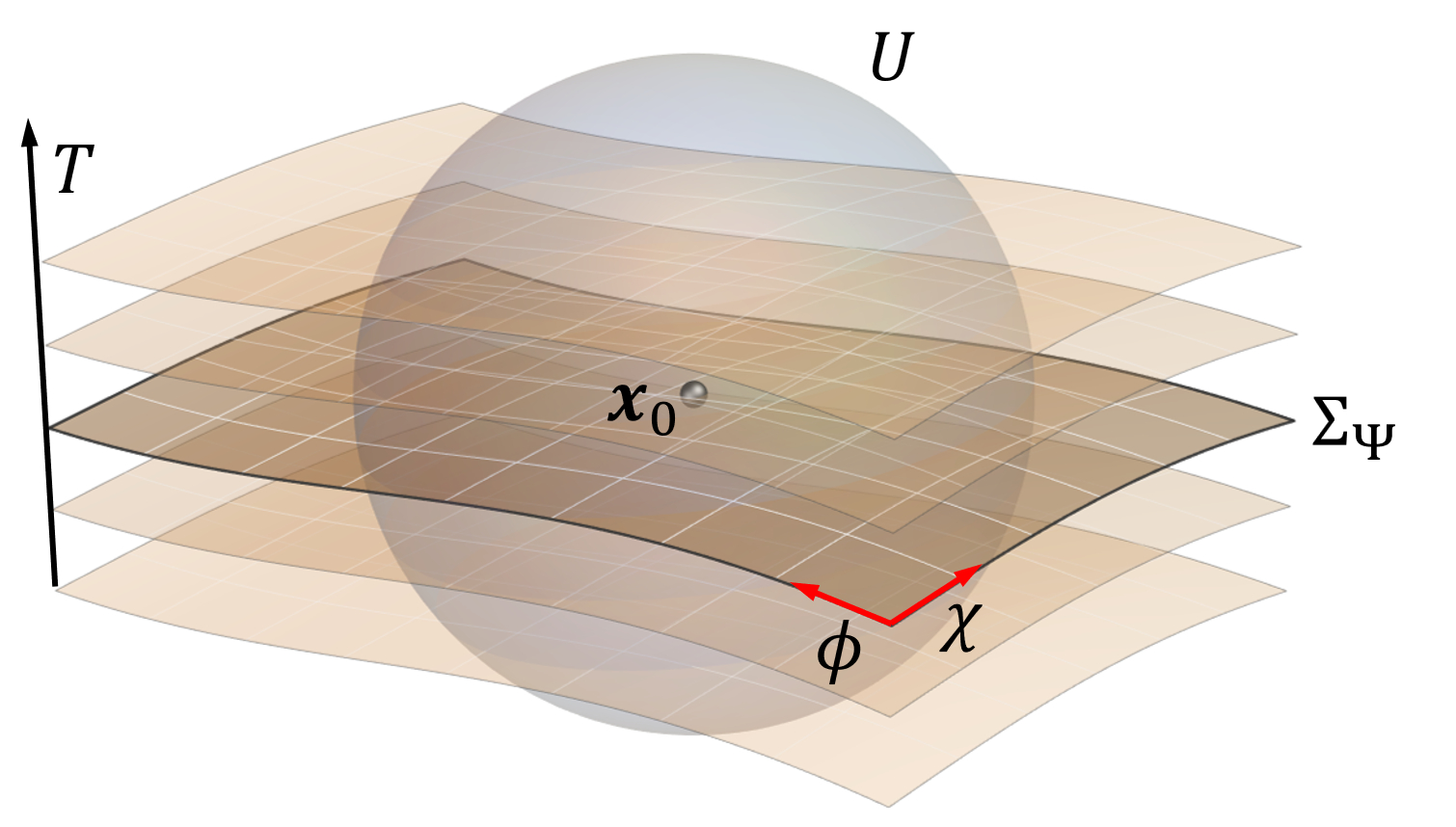}
\caption{
Schematic of a foliation-adapted neighborhood. The local neighborhood
\(U\) intersects the level surfaces \(\Sigma_{\Psi}\). Around
a point \(\bol{x}_0\in\Sigma_{\Psi}\), the coordinates \((\Psi,\chi,\phi)\) are
chosen so that \((\chi,\phi)\) are local isothermal coordinates on each leaf.
}
\label{fig2}
\end{figure}


\begin{prop}[Leafwise isothermal coordinates]\label{p:iso1}
For an arbitrary $\bol{x}_0\in \Omega_{\Sigma}$, there exist an open set $U\subset \Omega_{\Sigma}$ and the coordinates $(\Psi,\chi,\phi)$ on $U$ satisfying 
\begin{align}
&g_\Psi\big|_{U_\Psi}
=
\lambda\lr{\Psi,\chi,\phi}^2
\lr{d\chi^2+d\phi^2},  \label{eq:iso10}\\ 
&\iota_{\p_{\Psi}}d\Psi=1,
\qquad
\iota_{\p_{\Psi}}d\chi=0,
\qquad
\iota_{\p_{\Psi}}d\phi=0,  \label{eq:iso20}
\end{align}
with some positive function $\lambda(\Psi,\chi,\phi)>0$.
\end{prop}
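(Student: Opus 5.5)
We build the coordinates in two stages: first isothermal coordinates on a single leaf depending smoothly on the leaf parameter, then a transport of them off the leaf. Fix $\bol{x}_0\in\Omega_\Sigma$ with $\Psi(\bol{x}_0)=\Psi_*$.

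\textbf{Step 1 (flow-box chart).} Since $\nabla\Psi\neq\bol{0}$, the implicit function theorem gives a local chart $(\Psi,y^1,y^2)$ near $\bol{x}_0$ in which $\Psi$ is the first coordinate. For each fixed $\Psi$, the pair $(y^1,y^2)$ parametrizes a neighborhood in $\Sigma_\Psi$. In this chart the induced metric $g_\Psi=h_{ab}(\Psi,y)\,dy^ady^b$ is smooth in all three variables.

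\textbf{Step 2 (isothermal coordinates with parameters).} On each slice we want a local solution of the Beltrami equation $w_{\bar z}=\mu(\Psi,z)\,w_z$, where $z=y^1+iy^2$ and the Beltrami coefficient $\mu$ is computed from $h_{ab}$. Then $|\mu|<1$, and $\mu$ is smooth in $(\Psi,z)$. We use the standard construction: multiply $\mu$ by a cutoff, solve with the Beurling transform via a Neumann series, and normalize the solution. This yields a solution $w(\Psi,z)$ that depends smoothly on $\Psi$, because the fixed-point map is smooth in the parameter and $\|\mu\|_\infty<1$ holds uniformly on a small $\Psi$-interval. The Jacobian of $w$ is nonzero at $\bol{x}_0$ and hence nearby.

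Setting $\chi+i\phi=w$ gives leafwise isothermal coordinates, so $g_\Psi=\lambda^2(d\chi^2+d\phi^2)$ with $\lambda>0$ smooth. The map $(\Psi,y)\mapsto(\Psi,\chi,\phi)$ has block-triangular Jacobian with an invertible leaf block, so by the inverse function theorem it is a diffeomorphism onto its image on a small neighborhood $U$. This gives \eqref{eq:iso10}.

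\textbf{Step 3 (the conditions \eqref{eq:iso20}).} In the coordinate system $(\Psi,\chi,\phi)$, the coordinate vector field $\p_\Psi$ automatically satisfies $\iota_{\p_\Psi}d\Psi=1$ and $\iota_{\p_\Psi}d\chi=\iota_{\p_\Psi}d\phi=0$, by duality of coordinate frames. So \eqref{eq:iso20} is immediate once $(\Psi,\chi,\phi)$ is a genuine chart. Note that $\p_\Psi$ is then a particular transversal field $T$ as in \eqref{eq:tran}; it is generally not parallel to $\nabla\Psi$, which is why \eqref{eq:metric} has cross terms $g_{\Psi\theta^i}$.

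\textbf{Main obstacle.} The substantive point is the smooth dependence on $\Psi$ in Step 2. The classical existence of isothermal coordinates is a single-surface statement, and it must be upgraded to a parametric one. We handle this through the parameter-dependent Beltrami equation as described. An alternative, usable if the Beltrami route becomes delicate, is to solve a leafwise Dirichlet problem for $\Delta_{g_\Psi}\chi=0$ on a small disk, with boundary data a fixed linear function of $y$, and then take $\phi$ to be the harmonic conjugate defined by $d\phi=\ast_{g_\Psi}d\chi$. Smooth dependence on $\Psi$ then follows from elliptic estimates for smoothly varying coefficients. In either approach only local (disk) solutions are needed, so the global topology of the tori plays no role here.
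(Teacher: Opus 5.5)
Your proposal is correct and follows the same outline as the paper. Both use a chart $(\Psi,y^1,y^2)$ from the inverse function theorem, then leafwise isothermal coordinates $(\chi,\phi)$, then the inverse function theorem again for $(\Psi,y)\mapsto(\Psi,\chi,\phi)$ via the block-triangular Jacobian, and finally \eqref{eq:iso20} by duality of coordinate frames. The only difference is that the paper cites the single-surface isothermal-coordinate theorem and tacitly assumes $(\chi,\phi)$ depend smoothly on $\Psi$, whereas you justify this explicitly through the parameter-dependent Beltrami equation or a leafwise Dirichlet problem, which is exactly what the Jacobian step needs.
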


\begin{proof}
We set \(\Psi_0=\Psi(\bol{x}_0)\) and choose local ambient coordinates
\(\lr{x^1,x^2,x^3}\) near the point $\bol{x}_0\in \Omega_{\Sigma}$. Since \(d\Psi_{\bol{x}_0}\neq0\), by relabeling the coordinates if necessary, we may assume that 
\begin{align*}
\frac{\partial\Psi}{\partial x^3}(\bol{x}_0)\neq0.
\end{align*}
The map $U\ni \bol{x}\longmapsto \bol{y}=\lr{\Psi(\bol{x}),x^1,x^2}\in I\times \mathbb{R}^{2}
$ 
has nonzero Jacobian at \(\bol{x}_0\). By the inverse function theorem, there exist coordinates $
\lr{\Psi,y^1,y^2}$ 
on \(U\) such that $(y^1,y^{2})=(x^1,x^2)$. The induced metric on each local leaf slice $U_\Psi=U\cap\Sigma_\Psi$ has
the form
\eq{
g_\Psi\big|_{U_\Psi}
=
h_{ij}\lr{\Psi,y^{1},y^{2}}\,dy^idy^j,
\qquad i,j=1,2,
}
with smooth coefficients \(h_{ij}(\Psi,y^{1},y^{2})\).


We apply the local isothermal-coordinate theorem for oriented surfaces \cite[\S3.11]{Jost} and deduce the existence of the functions 
\begin{align}
\lr{\chi,\phi}
=
\lr{\chi\lr{\Psi,y^{1},y^{2}},\phi\lr{\Psi,y^{1},y^{2}}}
\end{align}
such that \eqref{eq:iso10} holds for some positive function  $\lambda(\Psi,\chi,\phi)>0$. Since the leafwise Jacobian
$\partial\lr{\chi,\phi}/{\partial (y^{1},y^{2})}$  
is nonzero after shrinking \(U\), the map
$\lr{\Psi,y^1,y^2}\longmapsto\lr{\Psi,\chi,\phi}$ 
is a local diffeomorphism. Thus \(\lr{\Psi,\chi,\phi}\) is a local
coordinate on $U$ isothermal on each local leaf slice $U_{\Psi}$ and satisfies \eqref{eq:iso20}. 
\end{proof}

\if{
Hence, locally, \(\partial_\Psi\) is a transverse lift of the leaf-label
direction. 
Globally, however, we do not assume that these local coordinate vector fields
patch together to define a distinguished global \(\partial_\Psi\). 
Instead, global statements are formulated using an arbitrary smooth
transverse vector field \(T\) satisfying
$\iota_Td\Psi=1$.
All formulas involving \(\lr{\chi,\phi}\), \(\lambda\), \(J\), and
\(\partial_\Psi\) are therefore local coordinate formulas on such a
neighbourhood \(U\subset\Omega_\Sigma\). 
The global statements are independent of this choice and are expressed
intrinsically in terms of \(g_\Psi\), the leafwise measure 
$d\Sigma=\left.\iota_TdV\right|_{\Sigma_\Psi}$, and the weighted harmonic space 
\(\tilde{\ml{H}}^1\lr{\Sigma_\Psi}\).
}\fi

\begin{prop}[Coordinate expression of the metric]
Let $\lr{\Psi,\chi,\phi}$ denote the local coordinates in Proposition \ref{p:iso1}. Then, the metric and the volume form are expressed as 
\begin{align}
&g_{\mathrm{Euc}}
=
g_{\Psi\Psi}\,d\Psi^2
+
2g_{\Psi\chi}\,d\Psi\,d\chi
+
2g_{\Psi\phi}\,d\Psi\,d\phi
+
\lambda^{2}\,(d\chi^{2}+d\phi^{2}),  \label{eq:metriciso} \\
&dV=J\, d\Psi\w d\chi\w  d\phi,\qquad
J=\lambda\sqrt{\lambda^2g_{\Psi\Psi}-g_{\Psi\phi}^2-g_{\Psi\chi}^2}.\label{eq:volueiso}
\end{align}
The inverse metric is given by 
\begin{align}
g_{\mathrm{Euc}}^{-1}
={}
\frac{\lambda^{4}}{J^{2}}\,\partial_{\Psi}^{2}
-\frac{2\lambda^{2}g_{\Psi\chi}}{J^{2}}\,
\partial_{\Psi}\partial_{\chi}
-\frac{2\lambda^{2}g_{\Psi\phi}}{J^{2}}\,
\partial_{\Psi}\partial_{\phi} 
+
\left(
\frac{1}{\lambda^{2}}
+\frac{g_{\Psi\chi}^{2}}{J^{2}}
\right)\partial_{\chi}^{2}
+\frac{2g_{\Psi\chi}g_{\Psi\phi}}{J^{2}}\,
\partial_{\chi}\partial_{\phi} 
+
\left(
\frac{1}{\lambda^{2}}
+\frac{g_{\Psi\phi}^{2}}{J^{2}}
\right)\partial_{\phi}^{2}.  \label{eq:inverseiso}
\end{align}
\end{prop}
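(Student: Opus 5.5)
The plan is to read off the metric components directly from the construction in Proposition~\ref{p:iso1}, and then obtain the volume factor and the inverse metric by explicit $3\times 3$ linear algebra.

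\textbf{Metric.} In the coordinates $(\Psi,\chi,\phi)$, the Euclidean metric is some symmetric tensor
\[
g_{\mathrm{Euc}}=g_{ab}\,dx^a dx^b,\qquad (x^a)=(\Psi,\chi,\phi).
\]
Its restriction to a leaf is obtained by setting $d\Psi=0$, and this restriction is exactly the induced metric $g_\Psi$. By \eqref{eq:iso10}, this forces $g_{\chi\chi}=g_{\phi\phi}=\lambda^2$ and $g_{\chi\phi}=0$. The mixed components $g_{\Psi\chi}$, $g_{\Psi\phi}$ and the component $g_{\Psi\Psi}$ remain arbitrary smooth functions. This gives \eqref{eq:metriciso}.

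\textbf{Volume form.} Since $dV=\sqrt{\det g}\,d\Psi\wedge d\chi\wedge d\phi$, up to orientation, it suffices to compute the determinant of
\[
G=\begin{pmatrix} g_{\Psi\Psi} & g_{\Psi\chi} & g_{\Psi\phi}\\ g_{\Psi\chi} & \lambda^2 & 0\\ g_{\Psi\phi} & 0 & \lambda^2\end{pmatrix}.
\]
Expanding along the first row gives
\[
\det G=g_{\Psi\Psi}\lambda^4-g_{\Psi\chi}^2\lambda^2-g_{\Psi\phi}^2\lambda^2=\lambda^2\bigl(\lambda^2g_{\Psi\Psi}-g_{\Psi\chi}^2-g_{\Psi\phi}^2\bigr).
\]
Hence $\sqrt{\det G}=J$ as in \eqref{eq:volueiso}. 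The orientation of $(\chi,\phi)$ can be chosen so that the sign is positive.

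\textbf{Inverse metric.} I would compute $G^{-1}=\mathrm{adj}(G)/\det G$ cofactor by cofactor, writing $\det G=J^2$:
\begin{itemize}
\item $(\Psi\Psi)$ entry: the cofactor is $\lambda^4$.
\item $(\Psi\chi)$ entry: the cofactor is $-g_{\Psi\chi}\lambda^2$, and similarly for $(\Psi\phi)$.
\item $(\chi\chi)$ entry: the cofactor is $g_{\Psi\Psi}\lambda^2-g_{\Psi\phi}^2$. Using $\lambda^2g_{\Psi\Psi}=J^2/\lambda^2+g_{\Psi\chi}^2+g_{\Psi\phi}^2$, this becomes $J^2/\lambda^2+g_{\Psi\chi}^2$, which yields $1/\lambda^2+g_{\Psi\chi}^2/J^2$. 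The $(\phi\phi)$ entry is handled the same way.
\item $(\chi\phi)$ entry: the cofactor is $g_{\Psi\chi}g_{\Psi\phi}$.
\end{itemize}
Symmetric off-diagonal entries contribute the factor $2$ in the symmetric-product notation of \eqref{eq:inverseiso}. As a final check I would verify $GG^{-1}=I$.

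The main obstacle is only bookkeeping. The one nontrivial simplification is rewriting the $(\chi\chi)$ and $(\phi\phi)$ cofactors via $J^2$, as above, to reach the stated form.
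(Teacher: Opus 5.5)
Your proposal is correct and matches the paper's proof: the metric form is read off from the isothermal condition on the leaves, and $J=\sqrt{\det g_{\mathrm{Euc}}}$ comes from the same $3\times 3$ determinant. The only difference is in inverting the metric. You use the adjugate, simplifying the $\chi\chi$ and $\phi\phi$ cofactors via $\lambda^2 g_{\Psi\Psi}=J^2/\lambda^2+g_{\Psi\chi}^2+g_{\Psi\phi}^2$, whereas the paper uses a Schur-complement block factorization with $S=g_{\Psi\Psi}-\lambda^{-2}(g_{\Psi\chi}^2+g_{\Psi\phi}^2)=J^2\lambda^{-4}$. Both methods give the same entries, and your cofactors check out.
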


\begin{proof}
The expression \eqref{eq:metriciso} follows from \eqref{eq:metric}. In the matrix form,  
\[
g_{\mathrm{Euc}}
=
\begin{pmatrix}
g_{\Psi\Psi} & g_{\Psi\chi} & g_{\Psi\phi} \\
g_{\Psi\chi} & \lambda^2 & 0 \\
g_{\Psi\phi} & 0 & \lambda^2
\end{pmatrix},
\]
and \eqref{eq:volueiso} follows from $J=\sqrt{\textrm{det}\ g_{\mathrm{Euc}}}$. We use the block matrix  
\[
g_{\mathrm{Euc}}=
\begin{pmatrix}
A & B\\
C & D
\end{pmatrix},
\quad A=g_{\Psi\Psi},\quad B=(g_{\Psi\chi}, g_{\Psi\phi}),\quad C={}^{t}B,\quad D=\lambda^{2}I,
\]
and set $S=A-BD^{-1}C=J^{2}\lambda^{-4}$, where $I$ denotes the identity matrix. Using the Schur complement formula,
\[
\begin{pmatrix}
A & B\\
C & D
\end{pmatrix}
=
\begin{pmatrix}
I & BD^{-1}\\
0 & I
\end{pmatrix}
\begin{pmatrix}
S & 0\\
0 & D
\end{pmatrix}
\begin{pmatrix}
I & 0\\
D^{-1}C & I
\end{pmatrix},
\]
we compute 
\begin{align*}
g_{\mathrm{Euc}}^{-1}
=
\begin{pmatrix}
I & 0\\
-D^{-1}C & I
\end{pmatrix}
\begin{pmatrix}
S^{-1} & 0\\
0 & D^{-1}
\end{pmatrix}
\begin{pmatrix}
I & -BD^{-1}\\
0 & I
\end{pmatrix}
=
\begin{pmatrix}
\lambda^{4}J^{-2} & -\lambda^{2}J^{-2}B\\
-(\lambda^{2}J^{-2}){}^{t}B & (J^{-2}) {}^{t}BB+\lambda^{-2}I
\end{pmatrix}.
\end{align*}
We obtain the expression \eqref{eq:inverseiso}. 
\end{proof}

\subsection{The local representation of the Euler equations}\label{sec:6}

In the local coordinates $(\Psi,\chi,\phi)$ on $U$, we express the velocity field $\bol{u}$ and its dual one-form $u=g_{\textrm{Euc}}(\bol{u},\cdot)$ as 
\begin{align}
\bol{u}&=u^{\Psi}(\Psi,\chi,\phi)\partial_\Psi+u^{\chi}(\Psi,\chi,\phi)\partial_\chi+u^{\phi}(\Psi,\chi,\phi)\partial _\phi,   \label{eq:tangentialiso}\\
u&=u_\Psi(\Psi,\chi,\phi)d\Psi+u_\chi(\Psi,\chi,\phi) d\chi+u_\phi(\Psi,\chi,\phi) d\phi.
\label{eq:one-formiso}
\end{align}

\begin{prop}
Assume that \eqref{eq:A1} holds. Then, we have  
\begin{align}
\bol{u}&=u^{\chi}(\Psi,\chi,\phi)\partial_{\chi}+u^{\phi}(\Psi,\chi,\phi)\partial_\phi,   \label{eq:tangential}\\
u&=\left(g_{\Psi\chi}u^{\chi}(\Psi,\chi,\phi)+g_{\Psi\phi}u^{\phi}(\Psi,\chi,\phi)\right)d\Psi+\lambda^{2}u^{\chi}(\Psi,\chi,\phi)d\chi +\lambda^{2}u^{\phi}(\Psi,\chi,\phi)d\phi.
\label{eq:one-form}
\end{align}
\end{prop}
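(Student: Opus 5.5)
The plan is to read off the contravariant $\Psi$-component of $\bol{u}$ from \eqref{eq:A1}, and then lower the index with the explicit metric \eqref{eq:metriciso}.

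First I evaluate the interior product in the adapted coordinates of Proposition~\ref{p:iso1}. The conditions \eqref{eq:iso20} say that $d\Psi(\p_\Psi)=1$, while $d\Psi(\p_\chi)=d\Psi(\p_\phi)=0$. The latter holds because $\chi,\phi$ are coordinates on the level sets of $\Psi$, so $\p_\chi,\p_\phi$ are tangent to $\Sigma_\Psi$. Hence, with $\bol{u}$ written as in \eqref{eq:tangentialiso},
\[
\iota_{\bol{u}}d\Psi=u^\Psi .
\]
By \eqref{eq:A1} this gives $u^\Psi=0$ on $U$, which is exactly \eqref{eq:tangential}.

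Second, I compute the covariant components $u_a=g_{ab}u^b$ using the matrix form of $g_{\mathrm{Euc}}$ from \eqref{eq:metriciso}. Substituting $u^\Psi=0$ gives
\[
u_\Psi=g_{\Psi\chi}u^\chi+g_{\Psi\phi}u^\phi,\qquad
u_\chi=\lambda^2u^\chi,\qquad
u_\phi=\lambda^2u^\phi .
\]
The zero off-diagonal entry $g_{\chi\phi}=0$ and the equal diagonal entries $\lambda^2$ come from isothermality \eqref{eq:iso10}. This yields \eqref{eq:one-form}.

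There is no genuine obstacle here. The only point needing care is justifying that $d\Psi(\p_\chi)=d\Psi(\p_\phi)=0$ in the new coordinates, and that follows directly from the fact that $(\Psi,\chi,\phi)$ is a coordinate system whose first coordinate is $\Psi$ itself.
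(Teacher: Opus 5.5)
Your proposal is correct and follows essentially the paper's approach. The paper simply cites the metric expression \eqref{eq:metriciso}; you make explicit that $u^\Psi=\iota_{\bol{u}}d\Psi$ vanishes by \eqref{eq:A1} and coordinate duality, and then lower the index with the isothermal block form of $g_{\mathrm{Euc}}$.
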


\begin{proof}
This follows from the metric expression \eqref{eq:metriciso}.
\end{proof}

\begin{lemma}\label{l:SEisoA}
Let $(u,p)$ be a $C^{1}$ solution to the steady Euler equation \eqref{SE} on $U$. Assume that the conditions \eqref{eq:A1} and \eqref{eq:A2} hold. Then, $(u_{\chi}, u_{\phi},p)$ satisfies the following system: 
\begin{equation}
\begin{aligned}
u^\chi
\left(
\partial_\Psi u_\chi-\partial_\chi u_\Psi
\right)
+
u^\phi
\left(
\partial_\Psi u_\phi-\partial_\phi u_\Psi
\right)
&=
\partial_\Psi p,
\\[1ex]
u^\phi
\left(
\partial_\chi u_\phi-\partial_\phi u_\chi
\right)
&=0,
\\[1ex]
u^\chi
\left(
\partial_\chi u_\phi-\partial_\phi u_\chi
\right)
&=0,
\\[1ex]
\partial_\chi(Ju^\chi)
+
\partial_\phi(Ju^\phi)
&=
0.
\end{aligned}
\label{eq:SEisoflux0}
\end{equation}
\end{lemma}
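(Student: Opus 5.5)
The plan is to express both equations in \eqref{SE} in the local coordinates $(\Psi,\chi,\phi)$ of Proposition~\ref{p:iso1}, using \eqref{eq:tangential} (so $u^\Psi=0$ by \eqref{eq:A1}) together with $dp=p'(\Psi)\,d\Psi$ from \eqref{eq:A2}.

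First I write $du=\sum_{a<b}(\partial_a u_b-\partial_b u_a)\,dx^a\wedge dx^b$ with $x^a\in\{\Psi,\chi,\phi\}$. Set $\omega_{\Psi\chi}=\partial_\Psi u_\chi-\partial_\chi u_\Psi$, $\omega_{\Psi\phi}=\partial_\Psi u_\phi-\partial_\phi u_\Psi$ and $\omega_{\chi\phi}=\partial_\chi u_\phi-\partial_\phi u_\chi$. Using $\iota_{\bol u}(dx^a\wedge dx^b)=u^a dx^b-u^b dx^a$ and $u^\Psi=0$, I compute
\[
\iota_{\bol u}du=-\bigl(u^\chi\omega_{\Psi\chi}+u^\phi\omega_{\Psi\phi}\bigr)d\Psi-u^\phi\omega_{\chi\phi}\,d\chi+u^\chi\omega_{\chi\phi}\,d\phi .
\]
Comparing with $-dp=-\partial_\Psi p\,d\Psi$, whose $d\chi$ and $d\phi$ components vanish, gives the first three equations of \eqref{eq:SEisoflux0}. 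The only care needed is with signs: the $d\chi$ equation comes with an overall minus sign, which is harmless because its right-hand side is zero.

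For the divergence equation I use \eqref{eq:volueiso}, which gives $\iota_{\bol u}dV=J(u^\chi\, d\phi\wedge d\Psi+u^\phi\, d\Psi\wedge d\chi)$, again because $u^\Psi=0$. Taking the exterior derivative gives
\[
d\iota_{\bol u}dV=\bigl(\partial_\chi(Ju^\chi)+\partial_\phi(Ju^\phi)\bigr)\,d\chi\wedge d\phi\wedge d\Psi .
\]
Since $d\chi\wedge d\phi\wedge d\Psi=d\Psi\wedge d\chi\wedge d\phi\neq0$, the condition $d\iota_{\bol u}dV=0$ is equivalent to the fourth equation.

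The main obstacle is regularity. With $u\in C^1$, the products $Ju^\chi$ and $Ju^\phi$ are $C^1$ since $J$ is smooth, so every derivative that appears is classical and no weak formulation is needed. Beyond that, the only delicate point is keeping the sign conventions of the interior product consistent.
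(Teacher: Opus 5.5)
Your proposal is correct and follows essentially the same route as the paper. Both arguments write $\iota_{\bol{u}}du=-dp$ and $d\iota_{\bol{u}}dV=0$ in the adapted chart $(\Psi,\chi,\phi)$, then use $u^\Psi=0$ from \eqref{eq:A1} and $\partial_\chi p=\partial_\phi p=0$ from \eqref{eq:A2}. The paper first states the full component system including the $u^\Psi$ terms and then specializes, whereas you set $u^\Psi=0$ before expanding and actually carry out the interior-product and sign computations the paper leaves implicit.
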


\begin{proof}
The steady Euler equations \eqref{SE} in the coordinates $(\Psi,\chi,\phi)$ are written as 
\begin{equation}
\begin{aligned}
u^\chi
\left(
\partial_\Psi u_\chi-\partial_\chi u_\Psi
\right)
+
u^\phi
\left(
\partial_\Psi u_\phi-\partial_\phi u_\Psi
\right)
&=
\partial_\Psi p,
\\[1ex]
u^\Psi
\left(
\partial_\Psi u_\chi-\partial_\chi u_\Psi
\right)
-
u^\phi
\left(
\partial_\chi u_\phi-\partial_\phi u_\chi
\right)
&=
-\partial_\chi p,
\\[1ex]
u^\Psi
\left(
\partial_\Psi u_\phi-\partial_\phi u_\Psi
\right)
+
u^\chi
\left(
\partial_\chi u_\phi-\partial_\phi u_\chi
\right)
&=
-\partial_\phi p,
\\[1ex]
\partial_\Psi(Ju^\Psi)
+
\partial_\chi(Ju^\chi)
+
\partial_\phi(Ju^\phi)
&=
0.
\end{aligned}
\label{eq:SEiso}
\end{equation}

The conditions \eqref{eq:A1} and \eqref{eq:A2} imply that $u^{\Psi}=0$ and $\partial_{\chi}p=\partial_{\phi}p=0$ and \eqref{eq:SEisoflux0} follows.
\end{proof}

\begin{prop}
Assume in addition in Lemma \ref{l:SEisoA} that $u$ is nowhere-vanishing on $U$. Then 
\begin{align}
\partial_\chi u_\phi-\partial_\phi u_\chi&=0,  \label{eq:irrotationaliso}\\
\partial_\chi\left(\frac{J}{\lambda^{2}}u_\chi\right)
+
\partial_\phi\left(\frac{J}{\lambda^{2}}u_\phi\right)
&=
0.  \label{eq:divfreeiso}
\end{align}
\end{prop}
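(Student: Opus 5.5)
The plan is to obtain both identities directly from the reduced system \eqref{eq:SEisoflux0} of Lemma~\ref{l:SEisoA}, combined with the coordinate expression \eqref{eq:one-form} of the velocity one-form.

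\textbf{Step 1: leafwise closedness \eqref{eq:irrotationaliso}.} By \eqref{eq:tangential}, $\bol{u}=u^\chi\p_\chi+u^\phi\p_\phi$ on $U$, since $u^\Psi=0$ by \eqref{eq:A1}. Because $\bol{u}$ is nowhere vanishing, at every point of $U$ at least one of $u^\chi$, $u^\phi$ is nonzero. The second and third equations of \eqref{eq:SEisoflux0} read
\[
u^\phi\,\omega=0,\qquad u^\chi\,\omega=0,\qquad \omega:=\p_\chi u_\phi-\p_\phi u_\chi .
\]
At each point we divide by whichever coefficient is nonzero, which gives $\omega=0$ pointwise on $U$. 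This is \eqref{eq:irrotationaliso}. No continuity argument is needed, because the conclusion is pointwise; $C^1$ regularity only ensures that $\omega$ is defined.

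\textbf{Step 2: weighted co-closedness \eqref{eq:divfreeiso}.} By \eqref{eq:one-form}, the tangential covariant components are
\[
u_\chi=\lambda^2u^\chi,\qquad u_\phi=\lambda^2u^\phi,
\]
because the leafwise block of the metric \eqref{eq:metriciso} is $\lambda^2 I$ and carries no $d\Psi$ contribution. Hence $u^\chi=\lambda^{-2}u_\chi$ and $u^\phi=\lambda^{-2}u_\phi$. Substituting these into the fourth equation of \eqref{eq:SEisoflux0}, $\p_\chi(Ju^\chi)+\p_\phi(Ju^\phi)=0$, yields \eqref{eq:divfreeiso} immediately.

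The only point requiring care is that the covariant components $u_\chi, u_\phi$ of the ambient one-form coincide with $\lambda^2$ times the contravariant ones. This holds even though the cross terms $g_{\Psi\chi}, g_{\Psi\phi}$ are nonzero, since they multiply $u^\Psi=0$ and contribute only to $u_\Psi$. I do not expect a genuine obstacle: Step 1 is the only place where the nonvanishing hypothesis enters, via the pointwise case split.
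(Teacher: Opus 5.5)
Your proposal is correct and follows the paper's proof. You use that $(u^\chi,u^\phi)$ never vanishes simultaneously, so the second and third equations of \eqref{eq:SEisoflux0} give \eqref{eq:irrotationaliso}; you then substitute $u_\chi=\lambda^2u^\chi$ and $u_\phi=\lambda^2u^\phi$ from \eqref{eq:one-form} into the last equation of \eqref{eq:SEisoflux0} to get \eqref{eq:divfreeiso}.
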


\begin{proof}
The components
$u^{\chi}$ and $u^{\phi}$ do not vanish simultaneously by $u\neq 0$ and \eqref{eq:irrotationaliso} follows from the second and the third equations of \eqref{eq:SEisoflux}. The condition \eqref{eq:divfreeiso} follows from \eqref{eq:tangential}-\eqref{eq:one-form} and the last equation of \eqref{eq:SEisoflux0}. 
\end{proof}

\section{Weighted harmonicity
  of tangential flows}\label{sec:harm}

We recall Bakry-Émery manifolds and define weighted harmonic one-forms on the torus $\Sigma_{\Psi}$. We then show that the tangential restriction of the velocity one-form of steady Euler flows \eqref{SE} is weighted harmonic (Theorem \ref{thm:1}).

\subsection{Bakry--Émery manifolds}\label{sec:weighted}

\begin{mydef}[Bakry--Émery manifold]\label{def:BEM}

Let \(M\) be a smooth manifold with a Riemannian metric \(g\). Let \(f\in C^\infty(M)\) be a positive function. We call the triple \((M,g,f)\) a Bakry--Émery manifold. 
\end{mydef}

We define the weighted codifferential and the Hodge-star operator with the function \(f\). 

\begin{mydef}[Weighted codifferential and Hodge-star operators]\label{def:codiff}
Let \((M,g,f)\) be a closed Bakry--Émery manifold of dimension \(n\). Let \(\Lambda^k\lr{M}\) denote the space of smooth \(k\)-forms on \(M\). For each \(k\), we set the weighted codifferential
\eq{
\tilde{\delta}:\Lambda^k(M)\to\Lambda^{k-1}(M)
}
as the formal adjoint of $d:\Lambda^{k-1}(M)\to\Lambda^k(M)$ with respect to the weighted \(L^2\)-inner product
\eq{
\langle \alpha,\beta\rangle_{L^2_f}
=
\int_M \langle \alpha,\beta\rangle\, f\,dV_g,
}
where \(\langle\cdot,\cdot\rangle\) denotes the pointwise pairing induced by \(g\). We define the weighted Hodge-star operator by 
\eq{
\tilde{\ast}=f\ \ast:\Lambda^k(M)\to\Lambda^{n-k}(M).
\label{eq:wHodgestar}}
\end{mydef}

\begin{prop}
The weighted codifferential \(\tilde{\delta}\) is expressed as 
\begin{align}
\tilde{\delta}\beta
=\frac{1}{f}\,\delta(f\beta)
=\frac{1}{f}\,(-1)^{n(k+1)+1}\ast d\ \tilde{\ast}\ \beta,\quad \beta\in\Lambda^k(M),
\label{wco} 
\end{align}
where $\delta:\Lambda^k(M)\to\Lambda^{k-1}(M)$ denotes the usual codifferential.
\end{prop}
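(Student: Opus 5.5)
The first equality in \eqref{wco} comes from the defining adjointness; the second then follows from the standard formula for $\delta$ in terms of $\ast$ and $d$.

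\emph{Step 1: $\tilde{\delta}\beta=f^{-1}\delta(f\beta)$.} Take $\alpha\in\Lambda^{k-1}(M)$ and $\beta\in\Lambda^k(M)$ on the closed manifold $M$. Since the pointwise pairing is $C^\infty(M)$-bilinear, we have $\langle d\alpha,\beta\rangle f=\langle d\alpha,f\beta\rangle$. The unweighted adjointness of $d$ and $\delta$ on a closed manifold (no boundary terms) then gives
\[
\langle d\alpha,\beta\rangle_{L^2_f}
=\int_M\langle d\alpha,f\beta\rangle\,dV_g
=\int_M\langle \alpha,\delta(f\beta)\rangle\,dV_g
=\int_M\Big\langle \alpha,\tfrac1f\delta(f\beta)\Big\rangle\,f\,dV_g .
\]
Here we use that $f>0$ is smooth, so $f^{-1}\delta(f\beta)$ is a smooth form. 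The weighted $L^2$ product is nondegenerate, so the formal adjoint is unique. Hence $\tilde{\delta}\beta=f^{-1}\delta(f\beta)$.

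\emph{Step 2: expressing $\delta$ through $\ast$ and $d$.} On an $n$-dimensional Riemannian manifold, the usual codifferential on $k$-forms satisfies $\delta=(-1)^{n(k+1)+1}\ast d\,\ast$. Applying this to $f\beta$ and using that $\ast$ is $C^\infty(M)$-linear, so that $\ast(f\beta)=f\ast\beta=\tilde{\ast}\beta$, we obtain
\[
\tfrac1f\delta(f\beta)=\tfrac1f(-1)^{n(k+1)+1}\ast d\,\tilde{\ast}\,\beta .
\]
This is the second equality in \eqref{wco}.

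\emph{The one delicate point} is the sign convention. The sign $(-1)^{n(k+1)+1}$ must match the convention under which $\delta$ is the $L^2$-adjoint of $d$. To confirm it, I would use Stokes' theorem: $d(\alpha\wedge\ast\gamma)=d\alpha\wedge\ast\gamma+(-1)^{k-1}\alpha\wedge d\ast\gamma$, together with $\ast\ast=(-1)^{j(n-j)}$ on $j$-forms. Everything else in the argument is routine.
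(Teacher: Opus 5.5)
Your proposal is correct and follows the paper's proof essentially verbatim. Like the paper, you move $f$ into the second slot of the pairing, use the unweighted adjointness of $d$ and $\delta$ to get $\tilde{\delta}\beta=f^{-1}\delta(f\beta)$, and then apply $\delta=(-1)^{n(k+1)+1}\ast d\ast$ to $f\beta$; your explicit remarks on uniqueness of the adjoint and on the sign convention are harmless refinements that the paper leaves implicit.
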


\begin{proof}
For \(\gamma\in\Lambda^{k-1}(M)\) and \(\beta\in\Lambda^k(M)\), 
\begin{align*}
\langle d\gamma,\beta\rangle_{L^2_f}
=
\int_M \langle d\gamma,\beta\rangle\, f\,dV_g
=
\int_M \langle d\gamma,f\beta\rangle\, dV_g
=
\int_M \langle \gamma,\delta(f\beta)\rangle\, dV_g
=
\int_M \langle \gamma,\frac{1}{f}\delta(f\beta)\rangle\, f\,dV_g.
\end{align*}
Thus the first identity in \eqref{wco} follows. The second follows from $\delta\beta= (-1)^{n(k+1)+1}\ast d\ast\beta$.
\end{proof}

We use a weighted version of the classical Hodge theorem, e.g., \cite[Eq.~(2.13)]{Lott03}, \cite[Sec.~2.2]{Branding24}. Since the differential \(d\) is unchanged, the de Rham cohomology groups are the same as in the unweighted case; the role of the weight is only to modify the formal adjoint of \(d\), and hence the notion of harmonic representative.

\begin{prop}[Weighted harmonic forms]\label{prop:BEM}
Let \((M,g,f)\) be a closed Bakry--Émery manifold. Then, 
\eq{
H^k_{\rm dR}\lr{M}
=
\frac{{\rm ker}\lr{d:\Lambda^k\lr{M}\rightarrow\Lambda^{k+1}\lr{M}}}
{{\rm im}\lr{d:\Lambda^{k-1}\lr{M}\rightarrow\Lambda^{k}\lr{M}}}\cong \tilde{\ml{H}}^k(M)
=
\lrc{\xi\in\Lambda^k(M):d\xi=\tilde{\delta}\xi=0}.
}
In particular, every de Rham cohomology class admits a unique weighted harmonic representative.
\end{prop}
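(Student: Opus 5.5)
The plan is to prove Proposition~\ref{prop:BEM} by reducing it to the classical Hodge theorem through a conformal change that absorbs the weight. I will treat the cases $k=0$ and $k=n$ only implicitly, since the general argument covers them.

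\textbf{Step 1: weighted Laplacian and the orthogonal decomposition.} Set $\tilde\Delta = d\tilde\delta+\tilde\delta d$. By \eqref{wco}, $\tilde\delta\beta=\delta\beta-\frac{1}{f}\iota_{\nabla f}\beta$, so $\tilde\delta$ differs from $\delta$ only by a zeroth-order term. Hence $\tilde\Delta$ has the same principal symbol as the Hodge Laplacian $\Delta$. It is therefore a second-order elliptic operator on the closed manifold $M$. It is also formally self-adjoint and nonnegative with respect to $\langle\cdot,\cdot\rangle_{L^2_f}$, because $\langle\tilde\Delta\xi,\xi\rangle_{L^2_f}=\|d\xi\|^2_{L^2_f}+\|\tilde\delta\xi\|^2_{L^2_f}$.

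Standard elliptic theory on closed manifolds applies (Fredholm property, elliptic regularity, finite-dimensional kernel). It yields the $L^2_f$-orthogonal decomposition
\[
\Lambda^k(M)=\ker\tilde\Delta\oplus \operatorname{im}\tilde\Delta .
\]
The identity above shows $\ker\tilde\Delta=\tilde{\mathcal H}^k(M)$. Since $\operatorname{im}\tilde\Delta\subset \operatorname{im} d+\operatorname{im}\tilde\delta$, and $\operatorname{im}d\perp\operatorname{im}\tilde\delta$ because $d^2=0$ and $\tilde\delta$ is the adjoint of $d$, we obtain the weighted Hodge decomposition
\[
\Lambda^k(M)=\tilde{\mathcal H}^k(M)\oplus d\Lambda^{k-1}(M)\oplus\tilde\delta\Lambda^{k+1}(M).
\]
An alternative route is to observe that $\tilde\Delta$ is unitarily equivalent, via $\xi\mapsto f^{1/2}\xi$, to a Schr\"odinger-type perturbation of $\Delta$ on the unweighted $L^2$ space, and quote the classical theory.

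\textbf{Step 2: the isomorphism with cohomology.} Define the map $\tilde{\mathcal H}^k\to H^k_{\rm dR}$ by $\xi\mapsto[\xi]$; it is well defined because weighted harmonic forms are closed.

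For surjectivity, take a closed form $\omega$ and decompose it as $\omega=h+d\alpha+\tilde\delta\beta$. Since $d\omega=0$, we get $d\tilde\delta\beta=0$, so $\|\tilde\delta\beta\|^2_{L^2_f}=\langle\beta,d\tilde\delta\beta\rangle_{L^2_f}=0$. Thus $\omega=h+d\alpha$ and $[\omega]=[h]$.

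For injectivity, suppose $h=d\alpha$ is weighted harmonic. Then $\|h\|^2_{L^2_f}=\langle\alpha,\tilde\delta h\rangle_{L^2_f}=0$, so $h=0$. This same computation gives uniqueness of the weighted harmonic representative in each class.

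\textbf{Main obstacle.} The only nontrivial point is the analytic input in Step~1: that an elliptic self-adjoint operator on a closed manifold has closed range and smooth kernel, which gives the orthogonal splitting. I would not reprove this. I would cite it (e.g.\ \cite{Lott03,Branding24}) after checking that the zeroth-order modification of $\delta$ preserves ellipticity and that $f>0$ makes the norms of $L^2_f$ and $L^2$ equivalent on compact $M$.
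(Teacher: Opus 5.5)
Your proposal is correct and is essentially the paper's route: the paper gives no argument beyond invoking the weighted Hodge theorem of Lott and Branding, noting that $d$ (hence $H^k_{\rm dR}(M)$) is unchanged and only its adjoint is modified, and your Steps~1--2 are the standard elliptic argument behind that citation. One correction to your opening sentence: a conformal change of metric cannot absorb the weight in the middle degree $k=n/2$, because $|\alpha|_g^2\,dV_g$ is conformally invariant there, and $k=n/2$ covers $1$-forms on the tori $\Sigma_\Psi$, the case the paper actually uses; your executed argument avoids this, since it uses only the elliptic operator $d\tilde\delta+\tilde\delta d$ (or the conjugation $\xi\mapsto f^{1/2}\xi$), and that works in every degree.
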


\subsection{Weighted harmonicity}

Let $\Omega_{\Sigma}\subset \mathbb{R}^{3}$ be a smooth foliated toroidal
domain with a smooth flux function $\Psi$ as in Definition \ref{def:marked-foliation}. Let \(T\) be a smooth transverse vector field satisfying \eqref{eq:tran}. We define the coordinates, metric, and measure as in Definition \ref{n:lg}.

\begin{prop}
The measure $d\Sigma_{\Psi}$ in  \eqref{eq:MM} is independent of the choice of $T$ satisfying \eqref{eq:tran}.
\end{prop}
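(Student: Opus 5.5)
The plan is to compare two admissible transverse fields directly. Let $T$ and $T'$ be smooth vector fields with $\iota_T d\Psi=\iota_{T'}d\Psi=1$, and set $W=T'-T$. Then $\iota_W d\Psi=0$, so $W$ is tangent to every leaf $\Sigma_{\Psi}$. By linearity of interior multiplication, $\iota_{T'}dV=\iota_T dV+\iota_W dV$. The claim therefore reduces to showing that $i^\ast(\iota_W dV)=0$ on each $\Sigma_\Psi$ whenever $W$ is tangent to the leaves.

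For this we use the factorization $dV=d\Psi\wedge\iota_TdV$. To justify it, apply $\iota_T$ to the $4$-form $d\Psi\wedge dV$, which vanishes in three dimensions:
\[
0=\iota_T(d\Psi\wedge dV)=(\iota_Td\Psi)\,dV-d\Psi\wedge\iota_TdV=dV-d\Psi\wedge\iota_TdV .
\]
Now apply $\iota_W$ to the factorization. Since $\iota_W d\Psi=0$,
\[
\iota_W dV=-\,d\Psi\wedge\iota_W\iota_T dV .
\]
Pulling back by the inclusion $i:\Sigma_\Psi\hookrightarrow\mathbb{R}^3$ gives $i^\ast d\Psi=d(\Psi\circ i)=0$, because $\Psi$ is constant on the leaf. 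Hence $i^\ast\iota_WdV=0$, and so $\iota_{T'}dV|_{\Sigma_\Psi}=\iota_TdV|_{\Sigma_\Psi}$.

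As a check, and to recover the coordinate form in \eqref{eq:MM}, we can compute in adapted coordinates. Write $dV=J\,d\Psi\wedge d\theta^1\wedge d\theta^2$. The normalization $\iota_Td\Psi=1$ means the $\partial_\Psi$-component of $T$ equals $1$. The $\partial_{\theta^j}$-components of $T$ only produce terms containing $d\Psi$, and these vanish on the leaf. This yields $M=J$ in \eqref{eq:MM} regardless of the choice of $T$.

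No step here is a real obstacle. The only point needing care is the sign and degree bookkeeping in the interior-product identity, which is handled by the vanishing of the $4$-form $d\Psi\wedge dV$.
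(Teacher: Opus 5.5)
Your proof is correct and follows the same route as the paper: write $T'=T+W$ with $W$ tangent to the leaves, then show $i^\ast(\iota_W dV)=0$. The paper only asserts this vanishing, whereas your factorization $dV=d\Psi\wedge\iota_T dV$ actually proves it (equivalently, $dV(W,X,Y)=0$ because $W,X,Y$ all lie in the two-dimensional tangent space of the leaf).
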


\begin{proof}
Let \(T'\) be another smooth transverse vector field satisfying \eqref{eq:tran}. Then, \(T'=T+W\) for some vector field \(W\) tangential to each \(\Sigma_\Psi\). Then, for the inclusion map $i:\Sigma_{\Psi}\hookrightarrow\mathbb{R}^3$,
\begin{align*}
\left.\iota_{T'}dV\right|_{\Sigma_\Psi}
-
\left.\iota_TdV\right|_{\Sigma_\Psi}
=\left.\iota_WdV\right|_{\Sigma_\Psi}=i^\ast\lr{\iota_WdV}=0.
\end{align*}
Hence the leafwise
measure $
d\Sigma$ 
is independent of the tangential ambiguity in the choice of \(T\).
\end{proof}

We set  
\begin{align}
f_{\Psi}=\frac{d\Sigma_{\Psi}}{dV_{g_{\Psi}}},
\end{align}
by the volume form $dV_{g_{\Psi}}$ on $\Sigma_{\Psi}$ induced from the metric $g_{\Psi}=i^{*}g_{\textrm{Euc}}$ and consider the Bakry--Émery manifold $(\Sigma_{\Psi},g_{\Psi},f_{\Psi})$.

\begin{proof}[Proof of Theorem \ref{thm:1}]
It suffices to show \eqref{eq:wharmonic0} in the local  isothermal coordinates $\lr{\Psi,\chi,\phi}$ on $U$. It follows from \eqref{eq:iso1} that  
\begin{align*}
g_{\Psi}&=\lambda^2(d\chi^2+d\phi^2),\\
dV_{g_{\Psi}}&=\lambda^{2}d\chi\wedge d\phi.
\end{align*}
We take $T=\partial_{\Psi}$ and observe from the metric expression \eqref{eq:volueiso} that  
\begin{align*} d\Sigma_{\Psi}=\iota_{T}dV\big|_{\Sigma_{\Psi}}=J\,d\chi\w d\phi.
\end{align*}
Thus ${f_{\Psi}}=J^{2}\lambda^{-2}$. By the definition of weighted Hodge-star operator \eqref{eq:wHodgestar},
\begin{align*}
\tilde *\,d\chi=\frac{J^{2}}{\lambda^{2}}\,d\phi,\qquad 
\tilde *\,d\phi=-\frac{J^{2}}{\lambda^{2}}\,d\chi.
\end{align*}
For the tangential one-form $v=u_{\chi}d\chi+u_{\phi}d\phi$, it follows from \eqref{eq:irrotationaliso} that  
\begin{align*}
dv=\lr{\frac{\p u_{\phi}}{\p\chi}-\frac{\p u_{\chi}}{\p\phi}}\,d\chi\wedge d\phi=0.
\end{align*}
For $\tilde *\,v
=J^{2}\lambda^{-2} u_{\chi}\,d\phi - J^{2}\lambda^{-2}\,d\chi$, it follows from \eqref{eq:divfreeiso} that 
\begin{align*}
d\ \tilde{*}\,v
=\lr{\frac{\p}{\p\chi}\lr{\frac{J}{\lambda^2}u_{\chi}}+\frac{\p}{\p\phi}\lr{\frac{J}{\lambda^2}u_{\phi}}}\,d\chi\wedge d\phi
=0.
\end{align*}
Thus $v$ is $\tilde{\ast}$-harmonic. 
\end{proof}

\begin{remark}\label{r:dim}
On each leaf $\Sigma_{\Psi}$, the space of weighted harmonic one-forms $\tilde{\mathcal{H}}^{1}(\Sigma_{\Psi})$ is two-dimensional. Indeed, $\tilde{\mathcal{H}}^{1}(\Sigma_{\Psi})$ is isomorphic to $H^{1}_{\textrm{dR}} (\Sigma_{\Psi})$ by Proposition \ref{prop:BEM}. Since $\Sigma_{\Psi}\cong \mathbb{T}^{2}$, $H^{1}_{\textrm{dR}} (\Sigma_{\Psi})\cong H^1_{\rm dR}(\mathbb{T}^2)\cong \mathbb{R}^2$ by the K\"unneth formula, e.g., \cite{BottTu1982}.
\end{remark}

\section{Lifted weighted harmonic vector fields}\label{s4}

We show Theorem \ref{t:basis} by constructing weighted harmonic one-forms on tori $\Sigma_{\Psi}$ and lifting their metric-dual tangential vector fields on $\Sigma_{\Psi}$ into solenoidal vector fields in $\Omega_{\Sigma}$.

\subsection{Weighted harmonic one-forms}

We construct a basis of $\tilde{\mathcal{H}}^{1}(\Sigma_{\Psi})$ using the inverse of a uniformly elliptic operator on the periodic torus $D=(\mathbb{R}/\mathbb{Z})^{2}$. It suffices to construct two linearly independent weighted harmonic one-forms on $\Sigma_{\Psi}$ by Remark \ref{r:dim}.

\begin{prop}\label{p:elliptic}
Let $D=(\mathbb{R}/\mathbb{Z})^{2}$ be a periodic torus. Let $Q(\vartheta)$ be a 
{smooth uniformly 
positive definite}  symmetric tensor on $D$. Let $L^{2}_{\textrm{av}}(D)$ denote the space of all average-zero functions in $L^{2}(D)$. Set the operator $L$ on $D$ by
\begin{align}
-LF=\textrm{div}_{\vartheta}(Q\nabla_{\vartheta}F).   \label{eq:operator}
\end{align}
For $G\in L^{2}_{\textrm{av}}(D)$, there exists a unique solution $F\in H^{2}{\lr{D}}\cap L^{2}_{\textrm{av}}(D)$ to
\begin{align}
{LF=G.}
\end{align}
Namely, the inverse operator $L^{-1}: L^{2}_{\textrm{av}}(D)\to H^{2}{\lr{D}}\cap L^{2}_{\textrm{av}}(D)$ exists. If $G$ is smooth on ${D}$, so is $F$. 
\end{prop}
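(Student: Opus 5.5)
The plan is the standard Lax--Milgram and elliptic regularity argument on the closed manifold $D$, carried out in the Hilbert space $H^1_{\mathrm{av}}(D)=H^1(D)\cap L^2_{\mathrm{av}}(D)$.

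\textbf{Weak formulation and existence.} We define the bilinear form
\[
B(F,\varphi)=\int_D Q\nabla_\vartheta F\cdot\nabla_\vartheta\varphi\,d\vartheta .
\]
Since $D$ has no boundary, integrating by parts shows that $LF=G$ in the weak sense is equivalent to $B(F,\varphi)=\int_D G\varphi\,d\vartheta$ for all $\varphi\in H^1(D)$. The compatibility condition is exactly $\int_D G=0$: testing with constants gives zero on both sides. It therefore suffices to test against $\varphi\in H^1_{\mathrm{av}}(D)$.

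We check the two hypotheses of Lax--Milgram.
\begin{itemize}
\item Boundedness of $B$ follows from the boundedness of the smooth tensor $Q$ on the compact set $D$.
\item For coercivity, uniform positive definiteness gives $B(F,F)\ge c\|\nabla F\|_{L^2}^2$. The Poincar\'e--Wirtinger inequality on the torus, $\|F\|_{L^2}\le C\|\nabla F\|_{L^2}$ for average-zero $F$, upgrades this to $B(F,F)\ge c'\|F\|_{H^1}^2$. One can prove this inequality directly by Fourier series: the zero mode vanishes, so $|k|\ge 2\pi$ for all remaining frequencies.
\end{itemize}
The functional $\varphi\mapsto\int G\varphi$ is bounded on $H^1_{\mathrm{av}}$. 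Lax--Milgram then gives a unique weak solution $F\in H^1_{\mathrm{av}}$.

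\textbf{Uniqueness.} If $LF=0$ with $F$ average-zero, then $B(F,F)=0$, so $\nabla F=0$. Hence $F$ is constant, and having average zero, $F=0$. Uniqueness also holds in the larger class $H^2\cap L^2_{\mathrm{av}}$, since any strong solution is a weak one.

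\textbf{Regularity.} This is the step needing the most care, though it is standard. We upgrade $F\in H^1$ to $H^2$ by interior elliptic regularity. On a torus this is global, since there is no boundary and $D$ is covered by finitely many charts on which the equation lives in the periodic setting. The cleanest route is the difference-quotient method (Nirenberg): apply $\Delta_h^{s}$ in the direction $e_s$ to the weak formulation and use $\varphi=\Delta_{-h}^{s}\Delta_h^{s}F$ as the test function, which is admissible by periodicity. Coercivity together with the smoothness of $Q$, which bounds the commutator terms, yields
\[
\|\Delta_h^{s}\nabla F\|_{L^2}\le C\bigl(\|G\|_{L^2}+\|F\|_{H^1}\bigr)
\]
uniformly in $h$, hence $F\in H^2$.

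\textbf{Smoothness.} Iterating the same argument on the differentiated equation $L(\partial^\alpha F)=\partial^\alpha G+[\text{lower-order terms}]$ gives $F\in H^{k+2}$ whenever $G\in H^k$. If $G$ is smooth, Sobolev embedding on the compact torus then gives $F\in C^\infty$. Alternatively, one may simply cite standard global elliptic regularity on closed manifolds (e.g.\ Gilbarg--Trudinger, Thm.~8.8 and 8.10, applied in periodic charts).
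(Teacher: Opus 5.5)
Your proposal is correct and takes the same approach as the paper. The paper's proof simply invokes Lax--Milgram in $H^1_{\rm av}(D)$ for existence and repeated elliptic regularity estimates (citing Gilbarg--Trudinger) for the $H^2$ and smoothness assertions; you fill in the details it leaves implicit (the compatibility condition $\int_D G=0$, coercivity via Poincar\'e--Wirtinger, uniqueness, and the difference-quotient bootstrap), and all of these steps are sound.
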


\begin{proof}
{The existence in $H^1_{\rm av}(D)
=
H^1(D)\cap L^2_{\rm av}(D)$ follows from the Lax-Milgram theorem.}  The regularity assertion follows from repeated applications of elliptic regularity estimates, e.g., \cite{GilbargTrudinger2001}.
\end{proof}



\begin{lemma}\label{lem:LBWH}
There exist a family of one-forms $\tau^{i}=\tau^{i}(\Psi)$ for $i=1,2$ on $\Sigma_{\Psi}$ such that $(\tau^{1},\tau^{2})$ is a basis on $\tilde{\mathcal{H}}^{1}(\Sigma_{\Psi})$ for each $\Sigma_{\Psi}$ and is smoothly varying in $\Psi$.
\end{lemma}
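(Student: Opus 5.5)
We follow the ansatz sketched in the introduction. On each leaf $\Sigma_\Psi$, in the global angle coordinates $(\theta^1,\theta^2)\in D=(\mathbb{R}/\mathbb{Z})^2$ of Definition~\ref{n:lg}, we look for
\[
\tau^i=(P^i_j+\partial_jF^i)\,d\theta^j,\qquad i=1,2,
\]
with $F^i=F^i(\Psi,\theta^1,\theta^2)$ single-valued on $D$. Such a $\tau^i$ is automatically closed, since $P^i_j$ are constants and $d(dF^i)=0$. Its de Rham class equals that of $P^i$, because $dF^i$ is exact.

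The first step is to reduce weighted co-closedness to an elliptic equation. In coordinates, $\tilde\ast$ acts on a one-form $w=w_j\,d\theta^j$ by
\[
\tilde\ast w = f\,\sqrt{\det h}\;h^{jk}w_k\,\epsilon_{jl}\,d\theta^l ,
\]
and $f\sqrt{\det h}=M$ because $d\Sigma_\Psi=M\,d\theta^1\wedge d\theta^2$. Hence
\[
d\,\tilde\ast\,w=\partial_j\!\left(Q^{jk}w_k\right)d\theta^1\wedge d\theta^2,
\]
up to sign, with $Q^{jk}=Mh^{jk}$. This is the general-coordinate version of the isothermal computation in the proof of Theorem~\ref{thm:1}. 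So $d\,\tilde\ast\,\tau^i=0$ is equivalent to
\[
L_\Psi F^i=\partial_k\!\left(Q^{kl}P^i_l\right).
\]
The right-hand side is a divergence of a smooth periodic field, so it has average zero on $D$. The tensor $Q$ is smooth and uniformly positive definite on $D$ for each fixed $\Psi$, since $M>0$ and $h$ is positive definite. Proposition~\ref{p:elliptic} therefore gives a unique smooth average-zero solution, $F^i=L_\Psi^{-1}\partial_k(Q^{kl}P^i_l)$.

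The second step is the basis property. The classes $[\tau^i]=[P^i]$ are linearly independent in $H^1_{\rm dR}(\Sigma_\Psi)$: the periods of $\tau^i$ along the coordinate cycles $\theta^j$ are exactly $P^i_j$, and the matrix $(P^i_j)$ is invertible. So $\tau^1,\tau^2$ are linearly independent elements of $\tilde{\mathcal H}^1(\Sigma_\Psi)$. That space is two-dimensional by Remark~\ref{r:dim}, so they form a basis.

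The main obstacle is smooth dependence on $\Psi$. The coefficients $Q^{kl}(\Psi,\theta)$ vary smoothly in $\Psi$, since the adapted coordinates of Definition~\ref{def:adapted} make the metric smooth in $(\Psi,\theta)$. We must show that $F^i$ is smooth jointly in $(\Psi,\theta)$. The plan is to work locally in $\Psi$, on a compact subinterval, where ellipticity is uniform, and differentiate the equation formally in $\Psi$:
\[
L_\Psi(\partial_\Psi F^i)=\partial_k\!\left(\partial_\Psi Q^{kl}P^i_l\right)+\partial_k\!\left(\partial_\Psi Q^{kl}\,\partial_lF^i\right).
\]
The right-hand side is again a divergence, hence average zero, so $\partial_\Psi F^i$ is determined by $L_\Psi^{-1}$. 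To make this rigorous, apply the Lax--Milgram estimate to difference quotients in $\Psi$ and use uniform elliptic regularity in $H^s(D)$, with bounds uniform in $\Psi$ on compact subintervals. This shows that $\Psi\mapsto F^i(\Psi)\in H^s(D)$ is $C^1$ for every $s$, and induction gives $C^\infty$. Sobolev embedding then yields joint smoothness. An alternative route is the implicit function theorem for the smooth map $(\Psi,F)\mapsto L_\Psi F-\partial_k(Q^{kl}P^i_l)$ on $H^{s+2}_{\rm av}\to H^s_{\rm av}$, whose $F$-derivative $L_\Psi$ is invertible. This gives smooth dependence directly and is probably the cleanest argument.
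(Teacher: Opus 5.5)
Your proposal is correct and follows the paper's proof essentially step by step. It uses the same closed ansatz $\tau^i=(P^i_j+\partial_jF^i)\,d\theta^j$, the same reduction of weighted co-closedness to $L_\Psi F^i=\partial_k(Q^{kl}P^i_l)$ solved via Proposition~\ref{p:elliptic}, and the same period identity $\oint_{\gamma_j}\tau^i=P^i_j$ combined with Remark~\ref{r:dim} for the basis property. Where you go beyond the paper is the smooth dependence on $\Psi$, which the paper only asserts after setting $F^i=L^{-1}\partial_k(Q^{kl}P^i_l)$; your implicit-function (equivalently, smooth operator-inversion) argument for $L_\Psi\colon H^{s+2}_{\mathrm{av}}\to H^{s}_{\mathrm{av}}$ on compact subintervals of $I$ is a valid justification, as is your explicit check that the right-hand side has zero average.
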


\begin{proof}
We take a basis \(\{\gamma_1,\gamma_2\}\) of \(H_1(\Sigma_\Psi;\mathbb Z)\) and chose $(\theta^{1},\theta^{2})$ so that
\eq{
        \oint_{\gamma_j} d\theta^i=\delta^i_j.
}
We use the adapted coordinates \((\Psi,\theta^1,\theta^2)\) in Definition \ref{def:adapted}. We seek the desired family of one-forms $\tau^{i}$ for $i=1,2$ in the closed form
\eq{
        \tau^i
        =\tau^{i}_jd\theta^{j}=
        \left(P_j^i+\partial_j F^i \right)d\theta^j, \label{eq:tau}
}
so that $d\tau^{i}=0$ with linearly independent period covectors $P^{i}=P^{i}_{j}d\theta^{j}$ in Definition \ref{n:lg} and some function $F=F({\Psi},\theta^{1},\theta^{2})$. The weighted co-closedness condition \(d\ \tilde{*}\ \tau=0\) is equivalent to the condition 
\begin{align}
\partial_i\left(Q^{ij}\tau_j\right)
        =\partial_i\left(
        Q^{ij}
        \left(P_j^i+\partial_j F^i\right)
        \right)
        =0,  \label{eq:divergence}
\end{align}
{with $Q^{ij}$ the smooth uniformly positive definite} symmetric tensor {defined in eq.} \eqref{eq:Ell}. Using the uniformly elliptic operator ${L=L_{\Psi}}$
we express \eqref{eq:divergence} as 
\eq{
L F^i
=\partial_k\left(Q^{kj}P_j^i\right).
}
By setting $F^i= L^{-1}\partial_k\left(Q^{kj}P_j^i\right)$, we obtain two weighted harmonic one-forms on $\Sigma_{\Psi}$ which are smooth in $\Psi$.

It remains to show that $(\tau^{1},\tau^{2})$ is a basis on $\tilde{\mathcal{H}}^{1}(\Sigma_{\Psi})$. Since $\tilde{\mathcal{H}}^{1}(\Sigma_{\Psi})$ is two-dimensional as noted in Remark \ref{r:dim}, we show that 
{$\tau^1$ and $\tau^2$ are} 
linearly independent. We set 
\begin{align}
a_1\tau^{1}+a_2\tau^{2}=0,  \label{eq:lindep}
\end{align}
for some constants $a_1$ and $a_2$. Since the integral of an exact form on a closed curve vanishes, integrating \eqref{eq:tau} yields 
\begin{align*}
\oint_{\gamma_j} \tau^{i} =P^{i}_j.
\end{align*}
By integrating \eqref{eq:lindep}, we find that 
\begin{align*}
\begin{pmatrix}
P^{1}_{1} & P^{2}_{1}\\
P^{1}_{2} & P^{2}_{2}
\end{pmatrix}
\begin{pmatrix}
a_1 \\
a_{2} 
\end{pmatrix}
=0.
\end{align*}
This matrix is invertible since $P^{1}$ and $P^{2}$ are linearly independent. Thus $a_1=a_2=0$.    
\end{proof}

\begin{remark}
It is observed from the construction that $\tau^{i}$ for $i=1,2$ in Lemma \ref{lem:LBWH} take the form
\begin{align}
\tau^{i}=\left(P^{i}_{j}+\partial_j L^{-1}\partial_k(Q^{kl}P^{i}_{l}) \right)d\theta^j.  \label{eq:tauform}
\end{align}
\end{remark}

\subsection{{Lifted vector fields}}

\begin{proof}[Proof of Theorem \ref{t:basis}]
Let $\tau^{i}$ for $i=1,2$ be the weighted harmonic one-forms on $\Sigma_{\Psi}$ in Lemma \ref{lem:LBWH}. Let $\bol{\tau}^{i}$ be the metric-dual of $\tau^{i}$ for $g_{\Psi}$. Then, by $\bol{\tau}^{i}=\tau^{i\sharp_{g_{\Psi}}}=h^{jm}\tau^{i}_{j}\partial_m$ and \eqref{eq:tauform},
\begin{align}
\bol{\tau}^{i}=\tau^{i\sharp_{g_{\Psi}}}
={h^{jm}\left(P^{i}_{j}+\partial_j L^{-1}\partial_k(Q^{kl}P^{i}_{l}) \right)\partial_m.}  \label{eq:taudual2}
\end{align}
Set the vector field on $\Omega_{\Sigma}$ by 
\begin{align}
\bol{\xi}^{i}(\bol{x})={\bol{\tau}^{i}({\bol{x}})=h^{jm}\left(P^{i}_{j}+\partial_j L^{-1}\partial_k(Q^{kl}P^{i}_{l}) \right)\partial_m.}  \label{eq:xirep}
\end{align}
Then, the weighted co-closedness condition $d\ \tilde{*}\ \tau^{i}=0$ implies that $d\iota_{\bol{\xi}^{i}}dV=0$. Since $\bol{\tau}^{i}$ is tangential to $\Sigma_{\Psi}$, $\iota_{\bol{\xi}^{i}}d\Psi=0$. For an arbitrary tangential vector field $X\in T\Sigma_{\Psi}$ and the one-form $\xi^i=g_{\textrm{Euc}}(\bol{\xi}^i,\cdot)$, we have 
\begin{align*}
{\iota_X i^{*}\xi^{i}=
\iota_{i_{*}X}\xi^{i}
=g_{\textrm{Euc}}(\bol{\xi}^{i},i_{*}X)
=g_{\Psi}(\bol{\tau}^{i},X).}
\end{align*}
Thus $i^{*}\xi^{i}=\tau^{i}$ for $i=1,2$ is the basis of $\tilde{\mathcal{H}}^{1}(\Sigma_{\Psi})$. We proved \eqref{eq:xicondition}.
\end{proof}

\section{Foliation structure}\label{sec:global}

We derive the tangential flow representation \eqref{harmu1} and the normal flux equation \eqref{lnGS}, 
thereby completing the proof of Theorem \ref{thm:lnGS}. 

\subsection{Tangential flow representation}

\begin{lemma}\label{thm:global}
Let $\lr{u,p}$ be a nowhere-vanishing $C^1$ solution of \eqref{SE} in $\Omega_{\Sigma}$. Assume that the conditions \eqref{eq:A1} and \eqref{eq:A2} hold. Then, there exist functions $c_1(\Psi)$ and $c_2(\Psi)$ such that 
\eq{
\bol{u}(\bol{x})
=
c_1(\Psi(\bol{x}))\bol{\xi}^1(\bol{x})+c_2(\Psi(\bol{x}))\bol{\xi}^2(\bol{x}).
\label{gGS0}
}
\end{lemma}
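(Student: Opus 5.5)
We combine Theorem~\ref{thm:1} with the basis of Theorem~\ref{t:basis}, working leaf by leaf and then checking that the coefficients vary appropriately with $\Psi$.

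\textbf{Step 1: leafwise expansion.} Fix a leaf $\Sigma_{\Psi}$. By Theorem~\ref{thm:1}, the tangential one-form $v=i^{\ast}u$ lies in $\tilde{\mathcal{H}}^{1}(\Sigma_{\Psi})$. By Theorem~\ref{t:basis} (equivalently Lemma~\ref{lem:LBWH}), $\tau^{i}=i^{\ast}\xi^{i}$, $i=1,2$, form a basis of this two-dimensional space (Remark~\ref{r:dim}). Hence there are unique real numbers $c_1(\Psi),c_2(\Psi)$ with
\[
v=c_1(\Psi)\,\tau^{1}+c_2(\Psi)\,\tau^{2}\quad\text{on }\Sigma_{\Psi}.
\]
To obtain the coefficients explicitly, integrate over the cycles $\gamma_j$ and use $\oint_{\gamma_j}\tau^{i}=P^{i}_{j}$. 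This gives $\oint_{\gamma_j}v=c_iP^{i}_j$, and since the matrix $(P^i_j)$ is invertible,
\[
c_i(\Psi)=(P^{-1})^{j}_{i}\oint_{\gamma_j(\Psi)}v .
\]
This formula shows that $c_i$ depends only on $\Psi$. Moreover, because $u\in C^1$ and the cycles $\gamma_j(\Psi)$ (coordinate loops of $\theta^1,\theta^2$) vary smoothly, $c_i$ is $C^1$ in $\Psi$.

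\textbf{Step 2: from one-forms to vector fields.} By \eqref{eq:A1}, $\bol{u}$ is tangent to $\Sigma_\Psi$, and by construction so is each $\bol{\xi}^i$. For any tangent vector $X$,
\[
g_\Psi(\bol{u},X)=\iota_X v=\sum_i c_i\,\iota_X\tau^{i}=\sum_i c_i\, g_\Psi(\bol{\xi}^i,X).
\]
Since $g_\Psi$ is nondegenerate, $\bol{u}=c_1\bol{\xi}^1+c_2\bol{\xi}^2$ on each leaf. As the leaves cover $\Omega_\Sigma$, \eqref{gGS0} holds everywhere with $c_i(\Psi(\bol{x}))$.

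\textbf{Main obstacle.} The only delicate point is global well-definedness and regularity of $c_i$ in $\Psi$. Theorem~\ref{thm:1} was proved in local isothermal charts, and one might worry that the expansion patches inconsistently. This is resolved by working intrinsically: harmonicity of $v$ is a coordinate-free statement on the whole leaf, and the period formula defines $c_i$ globally on each torus. The only remaining requirement is that the marked homology basis $\{\gamma_1,\gamma_2\}$ be chosen continuously in $\Psi$, which the adapted coordinates $(\Psi,\theta^1,\theta^2)$ of Definition~\ref{def:adapted} provide.
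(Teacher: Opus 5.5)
Your proposal is correct and follows essentially the same route as the paper. Both arguments expand $v=i^\ast u$ in the basis $\tau^i=i^\ast\xi^i$ on each leaf, then use the tangency of $\bol u$ and $\bol\xi^i$ together with nondegeneracy of $g_\Psi$ to pass from one-forms to vector fields; the paper phrases this last step via $\bol w=\bol u-\tilde{\bol u}$ and $g_\Psi(\bol w,\bol w)=0$. Your explicit period formula $c_i=(P^{-1})^j_i\oint_{\gamma_j}v$ and the $C^1$-regularity remark are not needed for the statement, but they are harmless additions and reuse the same period computation the paper employs in Lemma~\ref{lem:LBWH}.
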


\begin{proof}
By Theorem~\ref{thm:1}, the tangential one-form $v=i^\ast u$ is a weighted harmonic form on $\Sigma_{\Psi}$. We expand $v$ by the basis $\tau^{i}$ as 
\eq{
v=c_1(\Psi)\,\tau^1+c_2(\Psi)\tau^2.\label{v}
}
By the metric-dual tangential vector field $\bol{v}=v^{\sharp_{g_{\Psi}}}=c_1(\Psi)\bol{\tau}^{1}+c_2(\Psi)\bol{\tau}^{2}$, we set 
\begin{align*}
\tilde{\bol{u}}(\bol{x})=\bol{v}(\bol{x})
=c_1(\Psi(\bol{x}))\bol{\xi}^{1}(\bol{x})+c_2(\Psi(\bol{x}))\bol{\xi}^{2}(\bol{x}).
\end{align*}
We set $\bol{w}=\bol{u}-\tilde{\bol{u}}$. The vector field \(\bol{w}\) is tangent to \(\Sigma_\Psi\), because
\(\bol{u}\) and \(\bol{\xi}^i\) are tangent to \(\Sigma_\Psi\) by \eqref{eq:A1} and \eqref{eq:xicondition}. Moreover, it follows from $\tau^{i}=i^{*}\xi^{i}$ that $w=g_{\textrm{Euc}}(\bol{w},\cdot)$ satisfies 
\eq{
i^\ast w
=
i^\ast u
-
c_1(\Psi)i^\ast \xi^1
-
c_2(\Psi)i^\ast \xi^2
=v-c_1(\Psi) \tau^1-
c_2(\Psi) \tau^2
=0.
}
For any
\(X\in T\Sigma_\Psi\), the tangency of \(\bol{w}\) implies 
\eq{
0=\iota_Xi^\ast w
=
\iota_Xw
=
g_{\mathrm{Euc}}(\bol{w},X)
=
g_\Psi(\bol{w},X).
}
In particular, $g_\Psi(\bol{w},\bol{w})=0$. Since the Riemannian metric \(g_\Psi\) is positive
definite on \(\Sigma_\Psi\), \(\bol{w}=0\) and we conclude that \eqref{gGS0} holds. 
\end{proof} 

\begin{prop}
Assume that \eqref{eq:A1} holds. Then, $\bol{u}$ and $u=g_{\textrm{Euc}}(\bol{u},\cdot)$ are expressed as  
\begin{align}
\bol{u}&=u^1(\Psi,\theta^{1},\theta^{2})\partial_{1}
+u^2(\Psi,\theta^{1},\theta^{2})\partial_{2},   \label{eq:tangential2}\\
u&=u_\Psi(\Psi,\theta^{1},\theta^{2})d\Psi+u_1(\Psi,\theta^{1},\theta^{2}) d\theta^1+u_2(\Psi,\theta^{1},\theta^{2}) d\theta^2,
\label{eq:one-form2}
\end{align}
with the components
\begin{align}
u_j(\Psi,\theta^{1},\theta^{2})
&=h_{ij}(\Psi,\theta^{1},\theta^{2})u^i(\Psi,\theta^{1},\theta^{2}),\label{eq:U_B}\\
u_\Psi(\Psi,\theta^{1},\theta^{2})&=g_{\Psi \theta^{i}}(\Psi,\theta^{1},\theta^{2})u^{i}(\Psi,\theta^{1},\theta^{2}).  \label{eq:u_Psi}
\end{align}
\end{prop}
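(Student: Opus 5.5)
The statement is a direct coordinate computation in the adapted coordinates $(\Psi,\theta^1,\theta^2)$ of Definition~\ref{def:adapted}. I would first write a general vector field and its metric dual in these coordinates,
\[
\bol{u}=u^\Psi\partial_\Psi+u^1\partial_1+u^2\partial_2,\qquad u=g_{\textrm{Euc}}(\bol{u},\cdot)=u_\Psi\,d\Psi+u_1\,d\theta^1+u_2\,d\theta^2 .
\]
All components are functions of $(\Psi,\theta^1,\theta^2)$.

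The first step uses the flux condition \eqref{eq:A1}. Since $\iota_{\partial_\Psi}d\Psi=1$ and $\iota_{\partial_j}d\Psi=0$ in adapted coordinates, we get
\[
\iota_{\bol{u}}d\Psi=u^\Psi .
\]
Hence \eqref{eq:A1} forces $u^\Psi=0$, which gives \eqref{eq:tangential2}.

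The second step lowers the index with the metric \eqref{eq:metric}. Writing $g_{\textrm{Euc}}$ as the symmetric matrix with entries $g_{\Psi\Psi}$, $g_{\Psi\theta^i}$ and $h_{ij}$, we have
\[
u_a=g_{ab}u^b \quad\text{for } a,b\in\{\Psi,\theta^1,\theta^2\}.
\]
With $u^\Psi=0$, the $\theta^j$-components become $u_j=h_{ij}u^i$, which is \eqref{eq:U_B}. The $\Psi$-component becomes $u_\Psi=g_{\Psi\theta^i}u^i$, which is \eqref{eq:u_Psi}. Substituting these back gives \eqref{eq:one-form2}.

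There is no real obstacle; the argument is the same as the isothermal special case \eqref{eq:tangential}--\eqref{eq:one-form} with $\lambda^2\delta_{ij}$ replaced by $h_{ij}$. The only point to check is that the adapted coordinates are global up to the periodicity of the angles. This makes the component functions well defined on $I\times(\mathbb{R}/\mathbb{Z})^2$. In any case, each identity is pointwise and coordinate-local, so it suffices to verify it on coordinate patches.
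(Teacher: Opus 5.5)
Your proposal is correct and follows the same route as the paper, whose proof simply cites the metric expression \eqref{eq:metric}: condition \eqref{eq:A1} forces $u^\Psi=0$, and lowering the index with $g_{\mathrm{Euc}}$ gives $u_j=h_{ij}u^i$ and $u_\Psi=g_{\Psi\theta^i}u^i$. You have written out the details the paper leaves implicit.
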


\begin{proof}
This follows from the metric representation \eqref{eq:metric}.
\end{proof}

\begin{prop}
We set 
\begin{align}
U_k=c_i(\Psi)\left(P_j^i+\partial_j L^{-1}\partial_l\left(Q^{lk}P_k^i\right)\right). \label{eq:U}
\end{align}
Then, the velocity field \eqref{gGS0} and its dual one-form are expressed as 
\begin{align}
\bol{u}&=h^{jk}U_k\,\partial_j=h^{1k}U_k\,\partial_1+h^{2k}U_k\,\partial_2,  \label{eq:vtangent}\\
u &=g_{\Psi \theta^i}h^{ij}U_j\,d\Psi+U_1\,d\theta^1+U_2\,d\theta^2.\label{eq:vone-form}
\end{align}
\end{prop}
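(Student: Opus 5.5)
The claim follows by combining Lemma~\ref{thm:global} with the explicit form \eqref{eq:xirep} of the lifted fields, and then lowering indices with the block form \eqref{eq:metric} of the Euclidean metric. First, the index placement in \eqref{eq:U} needs fixing. The intended definition is
\[
U_j=c_i(\Psi)\bigl(P^i_j+\partial_j L^{-1}\partial_k(Q^{kl}P^i_l)\bigr),
\]
that is, $U_j=c_i(\Psi)\tau^i_j$, with $\tau^i_j$ the components of the weighted harmonic forms in \eqref{eq:tauform}. With this convention, $U=U_j\,d\theta^j=c_1\tau^1+c_2\tau^2$. By \eqref{v} this is exactly the tangential one-form $v=i^\ast u$.

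For \eqref{eq:vtangent}, I will substitute \eqref{eq:xirep} into \eqref{gGS0}:
\[
\bol{u}=c_i(\Psi)\,h^{jm}\tau^i_j\,\partial_m=h^{jm}U_j\,\partial_m .
\]
Relabelling indices and using the symmetry of $h^{jk}$ gives $\bol{u}=h^{jk}U_k\partial_j$. This means the contravariant components in \eqref{eq:tangential2} are $u^j=h^{jk}U_k$, with $u^\Psi=0$, which is consistent with \eqref{eq:A1}.

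For \eqref{eq:vone-form}, I will lower the index with $g_{\mathrm{Euc}}$ in the adapted coordinates \eqref{eq:adopted}, as in \eqref{eq:U_B}--\eqref{eq:u_Psi}.
\begin{itemize}
\item Tangential components: $u_m=h_{mj}u^j=h_{mj}h^{jk}U_k=U_m$, since $(h^{ij})$ is the inverse of $(h_{ij})$.
\item Normal component: $u_\Psi=g_{\Psi\theta^i}u^i=g_{\Psi\theta^i}h^{ij}U_j$.
\end{itemize}
Together these give \eqref{eq:vone-form}.

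There is no real analytic obstacle. The only points needing care are bookkeeping ones. One is the dummy-index convention in \eqref{eq:U}. The other is that the $\partial_m$ appearing in \eqref{eq:xirep} are the coordinate fields $\partial/\partial\theta^m$ of the adapted system $(\Psi,\theta^1,\theta^2)$, so the leafwise metric dual coincides with the ambient vector field having zero $\partial_\Psi$-component. This is what lets the Euclidean lowering pick up the cross terms $g_{\Psi\theta^i}$ only in the $d\Psi$ slot.
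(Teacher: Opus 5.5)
Your proof is correct and follows the paper's own route: you substitute \eqref{eq:xirep} into \eqref{gGS0} to get the contravariant components $u^j=h^{jk}U_k$, and then lower indices with the adapted-coordinate metric via \eqref{eq:U_B}--\eqref{eq:u_Psi}. Your reading of the inconsistent dummy indices in \eqref{eq:U} as $U_j=c_i(\Psi)\tau^i_j$, with $\tau^i_j$ as in \eqref{eq:tauform}, is the intended one and agrees with \eqref{UA}.
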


\begin{proof}
The expression \eqref{eq:vtangent} follows from \eqref{eq:xirep}, \eqref{gGS0}, and \eqref{eq:U}. 
The expression \eqref{eq:vone-form} follows from \eqref{eq:tangential2}-\eqref{eq:one-form2} and \eqref{eq:vtangent}.
\end{proof}

\subsection{The normal flux equation}
\label{sec:lnGS}

We now complete the proof of Theorem \ref{thm:lnGS}. It remains to show the normal flux equation \eqref{lnGS}.

\begin{prop}\label{p:Tchoice}
Let $i:\Sigma_{\Psi}\hookrightarrow\mathbb{R}^3$ denote the inclusion map. For one-forms $u$ in $\Omega_{\Sigma}$ such that $v=i^{*}u$ satisfies $dv=0$, the function $\iota_{T}\iota_{\bol{u}}du$ is independent of the choice of $T$ satisfying \eqref{eq:tran}.
\end{prop}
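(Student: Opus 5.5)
The plan is to compare two admissible transverse fields directly, exactly as in the proof that $d\Sigma_\Psi$ is independent of $T$. Let $T$ and $T'$ both satisfy \eqref{eq:tran}. Then $W=T'-T$ satisfies $\iota_W d\Psi=0$, so $W$ is tangent to every leaf $\Sigma_\Psi$. By linearity of interior products,
\[
\iota_{T'}\iota_{\bol u}du-\iota_{T}\iota_{\bol u}du=\iota_W\iota_{\bol u}du=du(\bol u,W).
\]
It therefore suffices to show that the two-form $du$ vanishes when evaluated on the pair $(\bol u,W)$.

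Here I will use the standing flux condition \eqref{eq:A1}, $\iota_{\bol u}d\Psi=0$, under which the proposition is applied in this section. It makes $\bol u$ tangent to the leaves as well. Both arguments of $du$ are then in the image of $i_*$: we have $\bol u=i_*\bol u_\Sigma$ and $W=i_*W_\Sigma$ for leafwise vector fields $\bol u_\Sigma,W_\Sigma$. Naturality of pullback and of the exterior derivative gives
\[
du(\bol u,W)=(i^*du)(\bol u_\Sigma,W_\Sigma)=(d\,i^*u)(\bol u_\Sigma,W_\Sigma)=dv(\bol u_\Sigma,W_\Sigma)=0,
\]
by the hypothesis $dv=0$. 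In the applications, this hypothesis is supplied by Theorem~\ref{thm:1}. This proves the independence.

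The only point needing care is regularity and the meaning of $d$ on $C^1$ forms. Since $u\in C^1$, $du$ is a continuous two-form and $i^*du=d(i^*u)$ holds pointwise, so no further argument is needed.

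The one genuine obstacle is that the statement does not mention tangency of $\bol u$ explicitly. Without \eqref{eq:A1} the claim fails, because $du(\bol u,W)$ then involves the transverse component of $\bol u$. So I will state explicitly that \eqref{eq:A1} is in force, as it is throughout this section. As a cross-check, I will verify in the local coordinates of Proposition~\ref{p:iso1}: taking $W=W^\chi\partial_\chi+W^\phi\partial_\phi$ with $u^\Psi=0$, the difference equals
\[
(u^\chi W^\phi-u^\phi W^\chi)(\partial_\chi u_\phi-\partial_\phi u_\chi),
\]
which vanishes by \eqref{eq:irrotationaliso}.
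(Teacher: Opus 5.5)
Your proof is correct and follows the paper's argument: you write $T'-T=W$, note that $W$ is leaf-tangent, and reduce the difference to $(i^*du)(\bol{u},W)=dv(\bol{u},W)=0$. You are also right that this step needs $\bol{u}$ tangent to the leaves, i.e.\ \eqref{eq:A1}; the paper's proof uses this silently when it replaces $\iota_{\bol{u}}\iota_W du$ by $\iota_{\bol{u}}\iota_W(i^*du)$, and without it the statement as written fails.
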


\begin{proof}
For $T$ and $T'$ satisfying \eqref{eq:tran}, we set $W=T-T'$ and observe that 
\begin{align*}
\iota_{\bol{u}}\iota_{T'}du-\iota_{\bol{u}}\iota_Tdu
=
\iota_{\bol{u}}\iota_Wdu
=
\iota_{\bol{u}}\iota_W\lr{i^*du}
=
\iota_{\bol{u}}\iota_W{dv}
=
0.
\end{align*}
Thus \(\iota_{\bol{u}}\iota_Tdu\) is independent of the tangential part of the transverse lift \(T\). 
\end{proof}

\begin{proof}[Proof of Theorem \ref{thm:lnGS}]
Taking the contraction of the Euler equation \eqref{SE} with $T$, we observe that 
\begin{align*}
\iota_{\bol{u}}\iota_Tdu=p'(\Psi).
\end{align*}
Using \eqref{eq:vone-form}, we compute
\begin{align*}
        du
        =
        \left(
        \partial_\Psi U_j-\partial_j u_\Psi
        \right)
        d\Psi\wedge d\theta^j
        -\partial_k U_j 
        d\theta^j\wedge d\theta^k.
        \end{align*}
By Proposition \ref{p:Tchoice}, we choose $T=\partial_{\Psi}$. Using \eqref{eq:vtangent}, we find that 
\begin{align*}
        \iota_{\bol{u}}\iota_Tdu
        =
        h^{ij}U_j
        \left(
        \partial_\Psi U_i-\partial_i u_\Psi
        \right).
\end{align*}
Substituting \(u_\Psi=g_{\Psi \theta^i}h^{ij}U_j\), we obtain the normal flux equation \eqref{lnGS}. 
\end{proof}

\subsection{The isothermal coodinate form}\label{sec:int}

We show that the normal flux equation \eqref{lnGS} takes a simpler form in the isothermal coordinates adapted to the foliation.

\begin{lemma}\label{l:isonlGS}
In the adapted isothermal coordinates $(\Psi,\chi,\phi)$ in Proposition \ref{p:iso1}, the normal flux equation \eqref{lnGS} is written as 
\begin{align}
-\mf{L}_{\bol{u}} u_{\Psi}
+
\displaystyle\frac{\abs{\bol{u}}}{\lambda}
\frac{\p\lr{\lambda\abs{\bol{u}}}}{\p\Psi}
=
p'(\Psi),  \label{eq:lnGSiso}
\end{align}
where $\mf{L}_{\bol{u}}$ denotes the Lie derivative along with $\bol{u}$,  $u_{\Psi}$ denotes the covariant $\Psi$-component of $\bol{u}$, and $\lambda$ denotes the conformal factor.
\end{lemma}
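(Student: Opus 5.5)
The plan is to work on a neighbourhood $U$ given by Proposition~\ref{p:iso1} and to redo, in the chart $(\Psi,\chi,\phi)$, the same computation that produced \eqref{lnGS}. The intrinsic content of \eqref{lnGS} is the identity $\iota_{\bol{u}}\iota_T du=p'(\Psi)$, obtained by contracting \eqref{SE} with $T$. By Proposition~\ref{p:Tchoice} this quantity does not depend on which transverse field $T$ with $\iota_Td\Psi=1$ is used, since $v=i^*u$ is closed by Theorem~\ref{thm:1}. Hence on $U$ we may take $T=\partial_\Psi$, the coordinate field of the isothermal chart, which satisfies \eqref{eq:iso20}. We then evaluate $\iota_{\bol{u}}\iota_{\partial_\Psi}du$ exactly as in the proof of Theorem~\ref{thm:lnGS}, with $(\theta^1,\theta^2)$ replaced by $(\chi,\phi)$. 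This gives
\begin{align*}
p'(\Psi)=u^\chi\left(\partial_\Psi u_\chi-\partial_\chi u_\Psi\right)+u^\phi\left(\partial_\Psi u_\phi-\partial_\phi u_\Psi\right),
\end{align*}
which is also the first equation of \eqref{eq:SEisoflux0} with $\partial_\Psi p=p'(\Psi)$. Here $u_\Psi=g_{\Psi\chi}u^\chi+g_{\Psi\phi}u^\phi$ is the covariant $\Psi$-component in this chart, by \eqref{eq:one-form}.

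The second step is to split this expression into two groups of terms. The derivative terms of $u_\Psi$ give $-(u^\chi\partial_\chi+u^\phi\partial_\phi)u_\Psi$. By \eqref{eq:tangential}, $\bol{u}$ has no $\partial_\Psi$ component, so this equals $-\bol{u}(u_\Psi)=-\mf{L}_{\bol{u}}u_\Psi$, because the Lie derivative of a function is just its directional derivative. For the remaining terms we use $u_\chi=\lambda^2u^\chi$ and $u_\phi=\lambda^2u^\phi$ from \eqref{eq:one-form}. Writing $s^2=(u^\chi)^2+(u^\phi)^2$, we obtain
\begin{align*}
u^\chi\partial_\Psi(\lambda^2u^\chi)+u^\phi\partial_\Psi(\lambda^2u^\phi)=2\lambda\,\partial_\Psi\lambda\, s^2+\lambda^2 s\,\partial_\Psi s .
\end{align*}

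The third step is to identify this with the $\abs{\bol{u}}$ term in \eqref{eq:lnGSiso}. Since $\bol{u}$ is tangent to the leaves, \eqref{eq:metriciso} gives $\abs{\bol{u}}^2=\lambda^2s^2$, so $\abs{\bol{u}}=\lambda s$. Then
\begin{align*}
\frac{\abs{\bol{u}}}{\lambda}\,\partial_\Psi\left(\lambda\abs{\bol{u}}\right)=s\,\partial_\Psi(\lambda^2 s)=2\lambda\,\partial_\Psi\lambda\,s^2+\lambda^2 s\,\partial_\Psi s,
\end{align*}
which matches the previous display. Combining the two groups yields \eqref{eq:lnGSiso}.

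The only delicate point is regularity and chart-dependence, rather than the algebra. The function $\abs{\bol{u}}$ is differentiable in $\Psi$ because $\bol{u}$ is nowhere vanishing and $C^1$, so $s>0$ on $U$. The quantities $u_\Psi$ and $\lambda$ depend on the chosen isothermal chart, so \eqref{eq:lnGSiso} must be read as a local identity on each such $U$. It is nevertheless equivalent to the global equation \eqref{lnGS}, because both sides equal the chart-independent scalar $\iota_{\bol{u}}\iota_T du$, as guaranteed by Proposition~\ref{p:Tchoice}.
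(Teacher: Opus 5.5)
Your proposal is correct and follows essentially the same route as the paper: you reduce \eqref{lnGS} to the first equation of \eqref{eq:SEisoflux0} in the isothermal chart, identify the $u_\Psi$-terms with $-\mf{L}_{\bol{u}}u_\Psi$, and rewrite the remaining terms using $u_\chi=\lambda^2u^\chi$ and $u_\phi=\lambda^2u^\phi$. The differences are cosmetic. You do the final algebra in contravariant components rather than via $\frac{1}{2\lambda^2}\p_\Psi(\lambda^2\abs{\bol{u}}^2)$, and you explicitly invoke Proposition~\ref{p:Tchoice} to justify replacing the adapted-chart $\p_\Psi$ by the isothermal-chart $\p_\Psi$, a step the paper leaves implicit.
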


\begin{proof}
The normal flux equation \eqref{lnGS} is equivalent to the first equation of \eqref{eq:SEisoflux0}. Namely, $u=u_{\Psi}d\Psi+u_{\chi}d\chi+u_{\phi}d\phi$ and $\bol{u}=u^{\chi}\partial_\chi+u^{\phi}\partial_\phi$ satisfy 
\begin{equation}
\begin{aligned}
u^\chi
\left(
\partial_\Psi u_\chi-\partial_\chi u_\Psi
\right)
+
u^\phi
\left(
\partial_\Psi u_\phi-\partial_\phi u_\Psi
\right)
=
\partial_\Psi p.
\end{aligned}
\label{eq:SEisoflux}
\end{equation}
The left-hand side is expressed as 
\begin{equation}
-\mf{L}_{\bol{u}}u_{\Psi}+{u^{\chi}\partial _\Psi u_{\chi}+u^{\phi}\partial _\Psi u_{\phi}}.
\end{equation}
Since $(\chi,\phi)$ is isothermal, we have $u_{\chi}=\lambda^{2}u^{\chi}$ and $u_{\phi}=\lambda^{2}u^{\phi}$. This yields  
\begin{align*}
|\bol{u}|^{2}=\iota_{\bol{u}}{u}
=\lambda^{2}(|u^{\chi}|^{2}+|u^{\phi}|^{2})=\frac{1}{\lambda^{2}}(|u_{\chi}|^{2}+|u_{\phi}|^{2}).
\end{align*}
It follows that 
\begin{align*}
{u^{\chi}\partial _\Psi u_{\chi}+u^{\phi}\partial _\Psi u_{\phi}}
=\frac{1}{\lambda^{2}}(u_{\chi}\partial _\Psi u_{\chi}+u_{\phi}\partial _\Psi u_{\phi}) 
=\frac{1}{2\lambda^{2}}\partial_{\Psi}(\lambda^{2}|\bol{u}|^{2})
={\abs{\bol{u}}^2\partial_{\Psi}\log{\lambda}+|\bol{u}|\partial_{\Psi}|\bol{u}|}
=\frac{|\bol{u}|}{\lambda}\partial_{\Psi}(\lambda |\bol{u}|).
\end{align*}
Thus \eqref{eq:lnGSiso} holds.
\end{proof}

\subsection{The axisymmetric case}

We deduce the Clebsch representation \eqref{Clebsch} and the Grad--Shafranov equation \eqref{GSeq0} from Theorem \ref{thm:lnGS} in the axisymmetric case.

\begin{prop}
Let $\Omega_{\Sigma}$ be an axisymmetric toroidal domain, and let
$\Psi=\Psi(r,z)$ be an axisymmetric flux function in cylindrical
coordinates $(r,\phi,z)$. 
{Let \(\bol{x}_0\in\Omega_{\Sigma}\), and set
\(\Psi_0=\Psi(\bol{x}_0)\). After an axisymmetric rotation, we may assume 
that \(\bol{x}_0\in\{\phi=0\}\).}
{
Let
\[
(r(\Psi,s),z(\Psi,s))
\]
be a smooth local parametrization of the meridional level curves
\(\Sigma_{\Psi}|_{\phi=0}\) for \(\Psi\) near \(\Psi_0\), where \(s\)
is the arc-length parameter on each curve. Define \(\chi\) locally by
\eq{
\frac{\partial\chi}{\partial s}=\frac{1}{r(\Psi,s)},
\qquad
\chi(\Psi,0)=0.\label{eq:chis}
}
Then \((\Psi,\chi,\phi)\) form adapted isothermal coordinates on a
neighborhood of \(\bol{x}_0\) satisfying 
}
\begin{align}
&\lambda=r,\quad J=\frac{r^{2}}{|\nabla \Psi|}, \label{eq:iso1}\\
&\partial_{\Psi}=\partial_\Psi r \partial_r+\partial_\Psi z \partial_z,\quad \partial_{\chi}=\frac{r}{|\nabla \Psi|}(\partial_z\Psi\partial_r-\partial_r\Psi\partial_z), \label{eq:iso2}\\
&d\Psi=\partial_r\Psi d r+\partial_z\Psi d z,\quad d\chi=\frac{|\nabla \Psi|}{r}(\partial_\Psi z dr-\partial_\Psi r dz), \label{eq:iso3}\\
&d \Psi\wedge d \chi=\frac{|\nabla \Psi|}{r}dz\wedge dr. \label{eq:iso4}
\end{align}
\end{prop}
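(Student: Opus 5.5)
The plan is a direct computation in cylindrical coordinates, organized around the orthonormal frame $\{\bol{e}_r,\bol{e}_\phi,\bol{e}_z\}$. Since $\Psi=\Psi(r,z)$ with $\nabla\Psi\neq\bol{0}$, the meridional level curve $\Sigma_\Psi|_{\phi=0}$ through a point near $\bol{x}_0$ is a smooth curve in the $(r,z)$ half-plane. We parametrize it by arc length $s$ and orient it so that the unit tangent is $(\partial_s r,\partial_s z)=|\nabla\Psi|^{-1}(\partial_z\Psi,-\partial_r\Psi)$. By the implicit function theorem, $(\Psi,s)$ are then smooth local coordinates on the meridional plane, and $(\Psi,s,\phi)$ are local coordinates in $\mathbb{R}^3$.

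\textbf{Step 1: the leaf metric and the isothermal factor.} The induced metric on a leaf is $g_\Psi=ds^2+r^2d\phi^2$. Substituting $ds=r\,d\chi$ from \eqref{eq:chis} gives $g_\Psi=r^2(d\chi^2+d\phi^2)$, so $(\chi,\phi)$ is isothermal with $\lambda=r$. The map $(\Psi,s)\mapsto(\Psi,\chi)$ is a local diffeomorphism because $\partial_s\chi=1/r\neq0$. Consequently $(\Psi,\chi,\phi)$ is a coordinate system of the type in Proposition~\ref{p:iso1}.

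\textbf{Step 2: coordinate vector fields.} At fixed $\Psi$ and $\phi$, we have $\partial_\chi=r\,\partial_s$, and combining this with the unit tangent above gives the $\partial_\chi$ formula in \eqref{eq:iso2}. The field $\partial_\Psi$ is the derivative of the embedding $(r(\Psi,s(\Psi,\chi)),z(\cdot))$ at fixed $\chi$ and $\phi$. To first order I write it as $\partial_\Psi r\,\partial_r+\partial_\Psi z\,\partial_z$, with $r,z$ regarded as functions of $(\Psi,\chi)$; the notation in \eqref{eq:iso2} should be read this way. The identity $\iota_{\partial_\Psi}d\Psi=1$ then reads $\partial_r\Psi\,\partial_\Psi r+\partial_z\Psi\,\partial_\Psi z=1$.

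\textbf{Step 3: dual one-forms and the wedge.} The formula for $d\Psi$ in \eqref{eq:iso3} is the chain rule. For $d\chi$, I will check the proposed expression by evaluating it on the basis. On $\partial_\Psi$ it gives $\frac{|\nabla\Psi|}{r}(\partial_\Psi z\,\partial_\Psi r-\partial_\Psi r\,\partial_\Psi z)=0$. On $\partial_\chi$ it gives $\frac{|\nabla\Psi|}{r}\cdot\frac{r}{|\nabla\Psi|}(\partial_\Psi z\,\partial_z\Psi+\partial_\Psi r\,\partial_r\Psi)=1$, using the Step 2 identity. On $\partial_\phi$ it gives $0$. This confirms that the expression is the dual coframe element. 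Expanding $d\Psi\wedge d\chi$ then produces $\frac{|\nabla\Psi|}{r}(\partial_r\Psi\,\partial_\Psi r+\partial_z\Psi\,\partial_\Psi z)\,dz\wedge dr$, which equals \eqref{eq:iso4} by the same identity.

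\textbf{Step 4: the Jacobian.} Using $dV=r\,dr\wedge d\phi\wedge dz=r\,dz\wedge dr\wedge d\phi$ together with \eqref{eq:iso4}, I get $d\Psi\wedge d\chi\wedge d\phi=\frac{|\nabla\Psi|}{r}\,dz\wedge dr\wedge d\phi=\frac{|\nabla\Psi|}{r^2}\,dV$. Hence $J=r^2/|\nabla\Psi|$, up to the sign fixed by the orientation. As a consistency check against \eqref{eq:volueiso}: here $g_{\Psi\phi}=0$, and the quantity $\lambda^2 g_{\Psi\Psi}-g_{\Psi\chi}^2$ equals $r^2$ times the squared normal component of $\partial_\Psi$, which is $r^2|\nabla\Psi|^{-2}$. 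This gives the same $J$.

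The main obstacle I expect is keeping orientations and signs consistent, namely the direction of $s$, the orientation of $dz\wedge dr$, and the positivity of $J$. I would fix the orientation of $s$ at the outset so that $J>0$, and verify every sign against the single identity $\iota_{\partial_\Psi}d\Psi=1$.
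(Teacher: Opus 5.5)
Your proposal is correct and follows essentially the same computation as the paper: the arc-length parametrization of the meridional curve, $g_\Psi=ds^2+r^2d\phi^2$ giving $\lambda=r$, $\partial_\chi=r\,\partial_s$, the identity $\partial_\Psi r\,\partial_r\Psi+\partial_\Psi z\,\partial_z\Psi=1$, and $J$ read off from $dV=r\,dz\wedge dr\wedge d\phi$ together with the wedge identity for $d\Psi\wedge d\chi$. The only minor difference is how you obtain $d\chi$: you verify the stated formula by pairing it with the frame $\{\partial_\Psi,\partial_\chi,\partial_\phi\}$, whereas the paper solves the $2\times 2$ system using the inverse Jacobian $\partial(r,z)/\partial(\Psi,\chi)=-r/|\nabla\Psi|$; the two are equivalent by uniqueness of the dual coframe, and with your orientation of $s$ the sign of $J$ does come out positive.
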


\begin{proof}
{Omitting the leaf label, we parametrize the torus $\Sigma_{\Psi}$} by 
\begin{align*}
X(s,\phi)=(r(s)\cos\phi,r(s)\sin\phi,z(s)).
\end{align*}
Using the tangential derivatives 
\begin{align*}
\partial_s X=(r'(s)\cos\phi,r'(s)\sin\phi,{z'}(s)),\quad 
\partial_\phi X=(-r(s)\sin\phi,r(s)\cos\phi,0),
\end{align*}
we compute the $(s,\phi)$-coordinates metric expression $g_{\Psi}=ds^{2}+r^{2}d\phi^{2}$. Since $ds^{2}=r^{2}d\chi^{2}$, $(\chi,\phi)$ are the isothermal coordinates with $\lambda=r$. The identity $J=r^{2}/|\nabla \Psi|$ follows from $dV=Jd\Psi\wedge d\chi\wedge d\phi=rdr\wedge d\phi\wedge dz$ and \eqref{eq:iso4}{, which is shown below}.

Since $\partial_s$ has length one and it is tangent to the curve {$\Sigma_{\Psi}|_{\phi=0}$, upon choosing an orientation,} we find that 
\begin{align*}
\partial_s=\frac{1}{|\nabla \Psi|}\left(\partial_z \Psi \partial_r-\partial_r \Psi \partial_z\right).
\end{align*}
{Combining this expression with} \eqref{eq:chis}, we obtain \eqref{eq:iso2}, 
\begin{align}
\partial_{\chi}
=\frac{r}{|\nabla \Psi|}\left(\partial_z \Psi {\p_r}-\partial_r \Psi {\p_z}\right).  \label{eq:dchi}
\end{align}
{Setting $d\chi=Adr+Bdz$,} 
it follows from $\iota_{\partial_\Psi}d\chi=0$ and $\iota_{\partial_\chi}d\chi=1$ that 
\begin{align*}
\begin{pmatrix}
r_{\Psi} & z_{\Psi}\\
r_{\chi} & z_{\chi}
\end{pmatrix}
\begin{pmatrix}
A \\
B
\end{pmatrix}
=
\begin{pmatrix}
0 \\
1
\end{pmatrix},
\end{align*}
{where we used the notation $r_{\Psi}=\p r/\p\Psi$.} 
Differentiating $\Psi(r(\Psi,\chi),z(\Psi,\chi))=\Psi$ in $\Psi$ yields 
\begin{align}
r_{\Psi}\partial_r \Psi+ z_{\Psi}\partial_z \Psi=1.  \label{eq:identityPsi}
\end{align}
This identity and \eqref{eq:dchi} imply 
\begin{align*}
{{\frac{\partial(r,z)}{\partial(\Psi,\chi)}}}=r_{\Psi}z_{\chi}-r_{\chi}z_{\Psi}=-\frac{r}{|\nabla \Psi|}.
\end{align*}
We thus obtain the coefficients
\begin{align*}
\begin{pmatrix}
A \\
B
\end{pmatrix}
=\left({{\frac{\partial(r,z)}{\partial(\Psi,\chi)}}}\right)^{-1}
\begin{pmatrix}
z_{\chi} & -z_{\Psi}\\
-r_{\chi} & r_{\Psi}
\end{pmatrix}
\begin{pmatrix}
0 \\
1
\end{pmatrix}
=\frac{|\nabla \Psi|}{r}
\begin{pmatrix}
z_{\Psi} \\
-r_{\Psi}
\end{pmatrix},
\end{align*}
and the expression \eqref{eq:iso3}. The identity \eqref{eq:iso4} follows from \eqref{eq:iso3} and \eqref{eq:identityPsi}.
\end{proof}

\begin{prop}\label{p:xichoice}
The vector fields \eqref{eq:xibasis} and thier metric-duals $\xi^{i}=g_{\textrm{Euc}}(\bol{\xi}^{i},\cdot)$ are expressed as 
\begin{align}
\bol{\xi}^{1}&=-\frac{|\nabla \Psi|}{r^{2}}\partial_{\chi},\quad \bol{\xi}^{2}=\frac{1}{r^{2}}\partial_{\phi},  \label{eq:exsp1}\\
\xi^{1}&=\xi^{1}_{\Psi}d\Psi-|\nabla \Psi|d\chi,\quad \xi^{2}=
d\phi. \label{eq:exsp2}
\end{align}
The vector fields $\bol{\xi}^{i}$ are lifted weighted harmonic vector fields satisfying \eqref{eq:xicondition}.
\end{prop}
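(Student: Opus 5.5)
The plan is to verify everything by direct computation in cylindrical coordinates and then transfer to the adapted isothermal coordinates $(\Psi,\chi,\phi)$ using \eqref{eq:iso1}--\eqref{eq:iso4}. For the first vector field, I will use $\nabla\phi=r^{-1}\boldsymbol{e}_\phi$ and $\nabla\Psi=\partial_r\Psi\,\boldsymbol{e}_r+\partial_z\Psi\,\boldsymbol{e}_z$ together with $\boldsymbol{e}_r\times\boldsymbol{e}_\phi=\boldsymbol{e}_z$ and $\boldsymbol{e}_z\times\boldsymbol{e}_\phi=-\boldsymbol{e}_r$. This gives
\[
\nabla\Psi\times\nabla\phi=\frac{1}{r}\left(\partial_r\Psi\,\partial_z-\partial_z\Psi\,\partial_r\right).
\]
Comparing with $\partial_\chi$ in \eqref{eq:iso2} gives $\boldsymbol{\xi}^1=-|\nabla\Psi|r^{-2}\partial_\chi$. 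For the second, $|\partial_\phi|=r$ gives $\nabla\phi=r^{-2}\partial_\phi$, which is \eqref{eq:exsp1}.

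For the duals \eqref{eq:exsp2}, $\xi^2=(\nabla\phi)^\flat=d\phi$ holds trivially. For $\xi^1$ I will use the metric \eqref{eq:metriciso} with $\lambda=r$. The tangential components are $g_{\mathrm{Euc}}(\boldsymbol{\xi}^1,\partial_\chi)=-|\nabla\Psi|r^{-2}\lambda^2=-|\nabla\Psi|$ and $g_{\mathrm{Euc}}(\boldsymbol{\xi}^1,\partial_\phi)=0$, because the cross term $h_{\chi\phi}$ vanishes in isothermal coordinates. The remaining $d\Psi$-component is simply $\xi^1_\Psi=g_{\Psi\chi}\xi^{1\chi}$, which we do not need explicitly.

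Next I will check \eqref{eq:xicondition}. Tangency $\iota_{\boldsymbol{\xi}^i}d\Psi=0$ is immediate, since only $\partial_\chi$ and $\partial_\phi$ appear. For solenoidality, I will write $d\iota_{\boldsymbol{\xi}}dV=d\left(J\xi^\chi\,d\phi\wedge d\Psi+J\xi^\phi\,d\Psi\wedge d\chi\right)$ (up to sign conventions) and use $J=r^2/|\nabla\Psi|$ from \eqref{eq:iso1}. This gives $J\xi^{1\chi}=-1$ and $J\xi^{2\phi}=|\nabla\Psi|^{-1}$. The first is constant, and the second is independent of $\phi$ by axisymmetry, so both divergences $\partial_\chi(J\xi^\chi)+\partial_\phi(J\xi^\phi)$ vanish. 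Alternatively, one can note $\nabla\cdot(\nabla\Psi\times\nabla\phi)=0$ and $\Delta\phi=0$ away from the axis, which is excluded by the hollow domain.

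For the leafwise pullbacks, $i^\ast\xi^2=d\phi$ is closed, and $i^\ast\xi^1=-|\nabla\Psi|\,d\chi$ is closed because $\partial_\phi|\nabla\Psi|=0$. Weighted co-closedness follows exactly as in the proof of Theorem~\ref{thm:1}: $d\,\tilde\ast\,(a\,d\chi+b\,d\phi)$ is proportional to $\partial_\chi(Ja\lambda^{-2})+\partial_\phi(Jb\lambda^{-2})$, which is the same divergence just computed. Hence both pullbacks lie in $\tilde{\mathcal H}^1(\Sigma_\Psi)$.

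The step needing most care is that they \emph{span} this space, which is two-dimensional by Remark~\ref{r:dim}. The coordinate $\chi$ is only local, but $-|\nabla\Psi|\,d\chi=-|\nabla\Psi|r^{-1}ds$ is a globally defined one-form on the leaf. I will therefore compute periods on the meridian loop $\gamma_m$ (where $\phi=\mathrm{const}$) and the toroidal loop $\gamma_t$ (where $s=\mathrm{const}$). We have $\oint_{\gamma_t}d\phi=2\pi$ and $\oint_{\gamma_m}d\phi=0$, while $\oint_{\gamma_t}i^\ast\xi^1=0$ and $\oint_{\gamma_m}i^\ast\xi^1=-\oint|\nabla\Psi|r^{-1}ds\neq0$ since the integrand is positive. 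The period matrix is therefore diagonal and invertible, so the argument of Lemma~\ref{lem:LBWH} gives linear independence, and thus a basis.
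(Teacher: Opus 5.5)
Your proposal is correct and follows the paper's route. Like the paper, you express $\bol{\xi}^1,\bol{\xi}^2$ in the adapted coordinates via \eqref{eq:iso2}, lower indices with the metric \eqref{eq:metriciso} (equivalently \eqref{eq:one-form}) to get \eqref{eq:exsp2}, and read off weighted co-closedness from the leafwise divergence in \eqref{eq:divfreeiso} with $J=r^2/|\nabla\Psi|$ from \eqref{eq:iso1}. Beyond that, your explicit closedness check and your period computation on the meridian and toroidal loops supply the spanning part of \eqref{eq:xicondition}, which the paper's terse proof leaves unstated: the periods show the two pullbacks are linearly independent, hence a basis of the two-dimensional space $\tilde{\mathcal{H}}^1(\Sigma_\Psi)$.
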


\begin{proof}
The expression \eqref{eq:exsp1} follows from \eqref{eq:iso2}. The expression \eqref{eq:exsp2} follows from the identity \eqref{eq:one-form} and \eqref{eq:exsp1}. The weighted harmonicity of $\tau^{i}=i^{*}\xi^{i}$ follows from \eqref{eq:divfreeiso}, \eqref{eq:iso1}, and \eqref{eq:exsp2}.
\end{proof}

\begin{prop}\label{p:expressions}
Let $\bol{u}$ satisfy the Clebsch representation \eqref{Clebsch}. Then, the following expressions hold:
\begin{align}
\bol{u}&=-\frac{|\nabla \Psi|}{r^{2}}\partial_{\chi}+\frac{\alpha(\Psi)}{r^{2}}\partial_{\phi},  \label{eq:ident1}\\
u&=u_{\Psi}d\Psi-|\nabla \Psi|d\chi+\alpha(\Psi)d\phi, \label{eq:ident2} \\
\Delta^{\ast}\Psi  
&=r^{2} u^{\chi}\left(\p_\Psi u_{\chi} - \p_\chi u_{\Psi}\right). \label{eq:ident3}
\end{align}
\end{prop}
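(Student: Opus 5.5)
The first two identities follow directly from Proposition~\ref{p:xichoice}. Under the Clebsch representation \eqref{Clebsch} we have $\bol{u}=\bol{\xi}^1+\alpha(\Psi)\bol{\xi}^2$, with $\bol{\xi}^i$ as in \eqref{eq:xibasis}. Substituting \eqref{eq:exsp1} gives \eqref{eq:ident1}. Lowering the index with \eqref{eq:exsp2} gives $u=\xi^1+\alpha\xi^2$, which is \eqref{eq:ident2} with $u_\Psi=\xi^1_\Psi$. In particular,
\[
u^\chi=-\frac{|\nabla\Psi|}{r^2},\qquad u_\chi=-|\nabla\Psi|.
\]
All components are independent of $\phi$ by axisymmetry.

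For \eqref{eq:ident3}, I would compute $du$ in two ways and compare the $d\Psi\wedge d\chi$ coefficients.

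\emph{Coordinate computation.} From \eqref{eq:ident2},
\[
du=\lr{\p_\Psi u_\chi-\p_\chi u_\Psi}\,d\Psi\wedge d\chi+\alpha'(\Psi)\,d\Psi\wedge d\phi .
\]
Here I use that $u_\Psi$ and $u_\chi$ depend only on $(\Psi,\chi)$. Hence the toroidal part $\alpha(\Psi)\,d\phi$ contributes nothing to the $d\Psi\wedge d\chi$ component.

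\emph{Vector-calculus computation.} Since $du=\iota_{\nabla\times\bol{u}}dV$, I would use the standard identity
\[
\nabla\times\lr{\nabla\Psi\times\nabla\phi}=-\Delta^*\Psi\,\nabla\phi
\]
for axisymmetric $\Psi$. The curl of $\alpha\nabla\phi$ is $\alpha'\nabla\Psi\times\nabla\phi$, which is poloidal. It therefore yields only $d\Psi\wedge d\phi$-type terms, consistent with the coordinate computation above.

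\emph{Comparing the two.} Using $\nabla\phi=r^{-2}\p_\phi$ together with $dV=J\,d\Psi\wedge d\chi\wedge d\phi$ and $J=r^2/|\nabla\Psi|$ from \eqref{eq:iso1}, I get
\[
\iota_{-\Delta^*\Psi\nabla\phi}\,dV=-\frac{\Delta^*\Psi}{|\nabla\Psi|}\,d\Psi\wedge d\chi .
\]
Comparing coefficients yields
\[
\p_\Psi u_\chi-\p_\chi u_\Psi=-\frac{\Delta^*\Psi}{|\nabla\Psi|}.
\]
Multiplying by $r^2u^\chi=-|\nabla\Psi|$ gives \eqref{eq:ident3}.

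\emph{Main obstacle.} The delicate step is the sign and orientation bookkeeping. One must verify the curl identity, including its sign, in cylindrical coordinates. One must also check that the orientation of $(\Psi,\chi,\phi)$ fixed by \eqref{eq:iso4} agrees with $dV=r\,dr\wedge d\phi\wedge dz$, so that $J$ is positive. If the signs prove awkward, I would instead verify the identity directly by writing $-|\nabla\Psi|\,d\chi$ via \eqref{eq:iso3} in $(r,z)$. Its exterior derivative can then be expanded using \eqref{eq:iso4}.
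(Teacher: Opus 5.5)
Your proposal is correct and follows the paper's argument: both compute $du$ once in the adapted coordinates $(\Psi,\chi,\phi)$ and once from the explicit poloidal field, compare the coefficient of the meridional two-form ($d\Psi\wedge d\chi\propto dz\wedge dr$), and then multiply by $r^2u^\chi=u_\chi=-|\nabla\Psi|$. The difference is only in packaging. The paper differentiates $u=-r^{-1}\partial_z\Psi\,dr+r^{-1}\partial_r\Psi\,dz+\alpha\,d\phi$ directly and converts with \eqref{eq:iso4}. You instead invoke $\nabla\times(\nabla\Psi\times\nabla\phi)=-\Delta^*\Psi\,\nabla\phi$ and $du=\iota_{\nabla\times\bol{u}}dV$ with $J=r^2/|\nabla\Psi|$, and the orientation consistency you worry about is already settled by \eqref{eq:iso1}.
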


\begin{proof}
Using $\nabla \phi=r^{-2}\partial_{\phi}$, it follows from \eqref{eq:iso2} that 
\begin{align*}
\nabla \Psi\times \nabla \phi=(\partial_r\Psi\partial_r+\partial_z\Psi\partial_z)\times \frac{1}{r^{2}}\partial_{\phi}
=-\frac{1}{r}(\partial_z\Psi\partial_r-\partial_r\Psi\partial_z)=-\frac{|\nabla\Psi|}{r^{2}}\partial_{\chi}.
\end{align*}
Thus the expression \eqref{eq:ident1} holds. Since $(\chi,\phi)$ is isothermal, the expression \eqref{eq:ident2} follows from \eqref{eq:one-form}. Differentiating $u$, we apply \eqref{eq:iso4} and \eqref{eq:ident2} to obtain 
\begin{align*}
du=(\partial_{\Psi}u_{\chi}-\partial_{\chi}u_{\Psi})d\Psi\wedge d \chi+\alpha'(\Psi)d\Psi\wedge d\phi
=-\frac{{u_{\chi}}}{r}(\partial_{\Psi}u_{\chi}-\partial_{\chi}u_{\Psi})dz\wedge d r+\alpha'(\Psi)d\Psi\wedge d\phi.
\end{align*}
Differentiating $u=-r^{-1}\partial_z\Psi dr+r^{-1}\partial_z\Psi dz+\alpha(\Psi)d\phi$ in turn yields
\begin{align*}
du=-\frac{1}{r}\Delta^{*}\Psi dz\wedge dr+\alpha'(\Psi)d\Psi\wedge d\phi.
\end{align*}
Thus the last expression \eqref{eq:ident3} follows.
\end{proof}

\begin{proof}[Proof of Theorem \ref{t:axisymmetric}]
The vector fields \eqref{eq:xibasis} satisfy the properties \eqref{eq:xicondition} by Proposition \ref{p:xichoice}. Thus the tangential flow representation \eqref{harmu1} holds. Namely,
\begin{equation*}
\bol{u}=c_1(\Psi)\nabla\Psi\cp\nabla\phi+c_2(\Psi)\nabla\phi, 
\end{equation*}
with some functions $c_i(\Psi)$. If $c_1(\Psi)=1$ and $c_2(\Psi)=\alpha(\Psi)$, the Clebsch representation \eqref{Clebsch} holds. The normal flux equation \eqref{eq:lnGSiso} is identical to 
\begin{equation*}
\begin{aligned}
u^\chi
\left(
\partial_\Psi u_\chi-\partial_\chi u_\Psi
\right)
+
u^\phi
\left(
\partial_\Psi u_\phi-\partial_\phi u_\Psi
\right)
=
\partial_\Psi p.
\end{aligned}
\end{equation*} 
The first and second terms agree with $r^{-2}\Delta^{*}\Psi$ and $r^{-2}\alpha(\Psi)\alpha'(\Psi)$ by the identity \eqref{eq:ident3} and the expressions \eqref{eq:ident2}-\eqref{eq:ident3}. Thus $\Psi$ satisfies the Grad-Shafranov equation \eqref{GSeq0}.
\end{proof}

\if{

\begin{remark}[The flux computation]\label{rem:iteration}
The normal flux equation \eqref{lnGS} does not appear to be elliptic in general unlike the classical equation \eqref{GSeq0}. One may try to compute a solution to \eqref{lnGS} for prescribed functions $c_i(\Psi)$ and $p(\Psi)$ by repeating the following procedures (i)-(v):
\begin{itemize}
\item[(i)] For a flux function $\Psi$, compute the leafwise data \eqref{eq:MM}-\eqref{eq:Ell} 
\item[(ii)] Compute the lifted solenoidal vector fields $\bol{\xi}^{i}$ by \eqref{eq:xirep}
\item[(iii)] Evaluate the scalar residual (LHS)-(RHS) in \eqref{lnGS}
\item[(iv)] Update the flux function to $\Psi'=\Psi+\delta\Psi$ by choosing $\delta\Psi$ so as to reduce the scalar residual
\end{itemize}
If an invariant-torus foliation exists, and if the chosen update scheme is well posed, one expects that this iteration converges toward a solution to \eqref{lnGS}.
\end{remark}

}\fi

\if{

\appendix

\section{Local structure of steady Euler flows foliated by  invariant tori  }\label{sec:local}\label{sec:local}

In this section, we establish a local structure
theorem for steady Euler flows foliated by invariant tori with the aid of the adapted isothermal coordinates constructed in subsec.~\ref{subsec:iso}:

\begin{thm}[Local structure and geometric constraints]\label{thm:structure}
Under the assumptions of Theorem~\ref{thm:harmonic}, {write in 
a local adapted chart $\lr{\Psi,\chi,\phi}$, where $\lr{\chi,\phi}$ are local isothermal coordinates on the leaf $\Sigma_{\Psi}$,}   
\eq{
\bol{u}=u^{\chi}\p_\chi+u^{\phi}\p_\phi,\qquad u=\bol{u}^{\flat}.
}
Let $\bol{x}\in\Omega_{\Sigma}$ be a point at which $u^{\chi}$ and $u^{\phi}$ do not vanish simultaneously.
Then there exists an open neighborhood $U\ni \bol{x}$ such that  the following holds on $U$.

\begin{itemize}
\item[(i)] \textbf{Local structure.}
Locally in a sufficiently small neighborhood  $V\subseteq U$
there exists a potential $\sigma\lr{\Psi,\chi,\phi}$ such that 
\eq{
u=d\sigma-\lrs{\frac{\p\sigma}{\p\Psi}-\frac{1}{\lambda^2}\lr{g_{\Psi\chi}\frac{\p\sigma}{\p\chi}+g_{\Psi\phi}\frac{\p\sigma}{\p\phi}}}d\Psi,
}
where $\lambda^2=g_{\chi\chi}=g_{\phi\phi}$, $g_{\Psi\chi}$ and $g_{\Psi\phi}$ are the covariant metric coefficients. 
\item[(ii)] \textbf{Geometric constraints.} 
In terms of $\sigma$,
denoting by $J$ the Jacobian of the isothermal coordinate transformation, the reduced Euler system is equivalent to
\sys{
&\frac{1}{2}\frac{\p}{\p\Psi}\lr{\sigma_{\chi}^2+\sigma_{\phi}^2}-\lr{\sigma_{\chi}\frac{\p}{\p\chi}+\sigma_{\phi}\frac{\p}{\p\phi}}\lr{\frac{g_{\Psi\chi}\sigma_{\chi}+g_{\Psi\phi}\sigma_{\phi}}{\lambda^2}}=\lambda^2p',\label{traC}\\
&\frac{\p}{\p\chi}\lr{\frac{J}{\lambda^2}\sigma_{\chi}}
+\frac{\p}{\p\phi}\lr{\frac{J}{\lambda^2}\sigma_{\phi}}
=0,\label{ellC}
}{SER3ii2_main}
where $\sigma_{\chi}=\p\sigma/\p\chi$ and $\sigma_{\phi}=\p\sigma/\p\phi$. 
{Equation \eqref{ellC} represents a weighted elliptic constraint and equation \eqref{traC} a scalar compatibility condition evaluated on its solutions.}  
\end{itemize}
\end{thm}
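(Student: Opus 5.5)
The plan is to work entirely in one adapted isothermal chart $(\Psi,\chi,\phi)$ from Proposition~\ref{p:iso1}. I will reduce both parts of the statement to the local component system \eqref{eq:SEisoflux0} of Lemma~\ref{l:SEisoA}.

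\emph{Choice of neighborhood.} Since $u^\chi$ and $u^\phi$ are continuous and do not vanish simultaneously at $\bol{x}$, they do not vanish simultaneously on a neighborhood $U$ of $\bol{x}$. On $U$, the second and third equations of \eqref{eq:SEisoflux0} then give the leafwise irrotationality \eqref{eq:irrotationaliso}, $\partial_\chi u_\phi=\partial_\phi u_\chi$. The last equation of \eqref{eq:SEisoflux0}, combined with $u_\chi=\lambda^2u^\chi$ and $u_\phi=\lambda^2u^\phi$, gives \eqref{eq:divfreeiso}. I then shrink to a coordinate box $V=\{|\Psi-\Psi_0|<\varepsilon,\ |\chi-\chi_0|<\varepsilon,\ |\phi-\phi_0|<\varepsilon\}\subseteq U$. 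Each leaf slice of $V$ is then a convex square, which is what makes a parametrized Poincaré lemma available.

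\emph{Part (i).} On $V$ I define the potential by the explicit path integral
\[
\sigma(\Psi,\chi,\phi)=\int_{\chi_0}^{\chi}u_\chi(\Psi,s,\phi_0)\,ds+\int_{\phi_0}^{\phi}u_\phi(\Psi,\chi,t)\,dt .
\]
By \eqref{eq:irrotationaliso} this satisfies $\sigma_\chi=u_\chi$ and $\sigma_\phi=u_\phi$. Because $u$ is $C^1$, $\sigma$ is $C^1$ in all variables, including $\Psi$, by differentiating under the integral sign. Next I use \eqref{eq:one-form}, namely $u_\Psi=g_{\Psi\chi}u^\chi+g_{\Psi\phi}u^\phi$ with $u^\chi=\sigma_\chi/\lambda^2$ and $u^\phi=\sigma_\phi/\lambda^2$. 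This gives
\[
u_\Psi=\frac{g_{\Psi\chi}\sigma_\chi+g_{\Psi\phi}\sigma_\phi}{\lambda^2}.
\]
Finally I write $u=d\sigma-\sigma_\Psi\,d\Psi+u_\Psi\,d\Psi$, which is exactly the formula claimed in (i).

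\emph{Part (ii).} The elliptic constraint \eqref{ellC} is precisely \eqref{eq:divfreeiso} after substituting $u_\chi=\sigma_\chi$ and $u_\phi=\sigma_\phi$. For \eqref{traC} I substitute into the first equation of \eqref{eq:SEisoflux0}. The terms $u^\chi\partial_\Psi u_\chi+u^\phi\partial_\Psi u_\phi$ become $\lambda^{-2}\tfrac12\partial_\Psi(\sigma_\chi^2+\sigma_\phi^2)$. The terms $-u^\chi\partial_\chi u_\Psi-u^\phi\partial_\phi u_\Psi$ become $-\lambda^{-2}(\sigma_\chi\partial_\chi+\sigma_\phi\partial_\phi)u_\Psi$, with $u_\Psi$ as above. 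Multiplying the resulting equation by $\lambda^2$ gives \eqref{traC}.

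\emph{Converse direction.} Given $\sigma$ satisfying \eqref{SER3ii2_main}, I define $u$ by the formula in (i) and set $\bol{u}=\lambda^{-2}(\sigma_\chi\partial_\chi+\sigma_\phi\partial_\phi)$. Then $\iota_{\bol u}d\Psi=0$ holds by construction. The leafwise curl $\partial_\chi u_\phi-\partial_\phi u_\chi=\partial_\chi\sigma_\phi-\partial_\phi\sigma_\chi$ vanishes identically, so the second and third equations of \eqref{eq:SEisoflux0} hold. Divergence-freeness follows from \eqref{ellC}, and the normal equation follows from \eqref{traC}. Since the derivation of \eqref{eq:SEisoflux0} from \eqref{eq:SEiso} only used $u^\Psi=0$ and $p=p(\Psi)$, the full local system \eqref{eq:SEiso} is recovered.

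\emph{Main obstacle.} I expect the delicate step to be regularity rather than algebra. Equation \eqref{traC} involves $\partial_\Psi\sigma_\chi$ and the tangential derivatives of $u_\Psi$, so I must check that $\partial_\Psi\sigma_\chi=\partial_\Psi u_\chi$ exists and that the mixed derivatives commute. I would handle this using the explicit integral formula for $\sigma$, which shows $\sigma_\chi,\sigma_\phi\in C^1$ jointly in $(\Psi,\chi,\phi)$. A secondary point is that $\sigma$ is defined only locally: it is generally multivalued around the cycles of the torus, and this is exactly why the statement is restricted to $V$.
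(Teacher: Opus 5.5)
Your proposal is correct and follows essentially the paper's route. You get leafwise irrotationality from the second and third component equations, build a local potential by iterated integration (the paper integrates in $\phi$ at $\chi_0$ and then in $\chi$, you do the reverse order), use the covariant relation $u_\Psi=(g_{\Psi\chi}\sigma_\chi+g_{\Psi\phi}\sigma_\phi)/\lambda^2$, and substitute directly into the reduced system. The differences are cosmetic: the paper first writes the constraints with inverse-metric coefficients ($g^{\Psi\chi}/g^{\Psi\Psi}$, $1/\sqrt{g^{\Psi\Psi}}$) and then converts, whereas you substitute in covariant form directly and also spell out the converse and the regularity bookkeeping that the paper leaves implicit.
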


We now prove Theorem~\ref{thm:structure} for 
a general foliation geometry with the aid of the isothermal coordinates constructed in subsec.~\ref{subsec:iso}. 

 \begin{proof}
 (i) Because $u\neq 0$, the second and third equations of the reduced Euler system \eqref{SERiso} imply that in a sufficiently small neighborhood $V\subseteq U\subseteq\Omega_{\Sigma}$ we can define a potential $\sigma\lr{\Psi,\chi,\phi}$ such that
\eq{
u_{\chi}=\frac{\p\sigma}{\p\chi},\qquad u_{\phi}=\frac{\p\sigma}{\p\phi}.
}
A possible choice in a neighborhood $V$ of a point $\lr{\Psi,\chi_0,\phi_0}$  is given by
\eq{
\sigma=\varrho\lr{\Psi,\phi}+\int_{\chi_0}^{\chi}u_{\chi}\,d\chi,\qquad\frac{\p\varrho}{\p\phi}=u_{\phi}\lr{\Psi,\chi_0,\phi}. 
}
On the other hand, the covariant components of the velocity $1$-form $u$ expressed in the isothermal coordinates are given by \eqref{contra}, so that, in $V$, 
\eq{
u=&\frac{1}{\lambda^2}\lr{g_{\Psi\chi}\frac{\p\sigma}{\p\chi}+g_{\Psi\phi}\frac{\p\sigma}{\p\phi}}\,d\Psi+\frac{\p\sigma}{\p\chi}\,d\chi+\frac{\p\sigma}{\p\phi}\,d\phi
=d\sigma-\lrs{\frac{\p\sigma}{\p\Psi}-\frac{1}{\lambda^2}\lr{g_{\Psi\chi}\frac{\p\sigma}{\p\chi}+g_{\Psi\phi}\frac{\p\sigma}{\p\phi}}}d\Psi.\label{uiso}
}
(ii) In $V$, substituting the expression \eqref{uiso} of $u$ into the governing equations~\eqref{SERiso}, 
we further obtain the reduced Euler system
\sys{
&\frac{1}{2}\frac{\p}{\p\Psi}\lr{\sigma_{\chi}^2+\sigma_{\phi}^2}+\lr{\sigma_{\chi}\frac{\p}{\p\chi}+\sigma_{\phi}\frac{\p}{\p\phi}}\lr{
\frac{g^{\Psi\chi}\sigma_{\chi}+g^{\Psi\phi}\sigma_{\phi}}{g^{\Psi\Psi}}}=\lambda^2p',\\
&\frac{\p}{\p\chi}
\lr{
\frac{\sigma_{\chi}}{\sqrt{g^{\Psi\Psi}}}
}
+\frac{\p}{\p\phi}\lr{
\frac{\sigma_{\phi}}{\sqrt{g^{\Psi\Psi}}}
}
=0,\label{ellX}
}{SERisoX}
where $\sigma_{\chi}=\p\sigma/\p\chi$
and $\sigma_{\phi}=\p\sigma/\p\phi$. 
Note that system \eqref{SERisoX} is equivalent to \eqref{SER3ii2_main} upon substitution of \eqref{covg}.
The second equation \eqref{ellX} is a second-order elliptic equation,
\eq{
\sigma=\tilde{\sigma}\lrs{\frac{J}{\lambda^2}}=\tilde{\sigma}\lrs{g^{\Psi\Psi}},
}
which determines $\sigma$ on each level surface $s_{\Psi}=\Sigma_{\Psi}\cap V$ 
under suitable boundary conditions on $\p s_{\Psi}$. 
The remaining equation represents a geometric constraint on the shape of the level surface, 
\eq{
\frac{1}{2}\frac{\p}{\p\Psi}\lr{\tilde{\sigma}_{\chi}^2+\tilde{\sigma}_{\phi}^2}
+\lr{\tilde{\sigma}_{\chi}\frac{\p}{\p\chi}+\tilde{\sigma}_{\phi}\frac{\p}{\p\phi}}\lr{\frac{g^{\Psi\chi}\tilde{\sigma}_{\chi}+g^{\Psi\phi}\tilde{\sigma}}{g^{\Psi\Psi}}}=\lambda^2p'.
\label{gcX}
}
\end{proof}

\begin{remark}[Interpretation of Theorem~\ref{thm:structure}]\label{rem:int}
Theorem~\ref{thm:structure} shows that, given a hollow toroidal domain $\Omega_{\Sigma}$ foliated by nested flux surfaces $\Sigma_{\Psi}$, if the geometric constraint \eqref{gcX} is satisfied for some potential $\tilde{\sigma}$ obtained as solution of \eqref{ellX}, then steady solutions of the Euler equations with invariant tori exist in the local form \eqref{uiso}. Moreover, by Theorem~\ref{thm:harmonic}, their tangential restrictions are weighted harmonic $1$-forms on each invariant torus. Thus, the existence problem for steady Euler flows with invariant tori reduces to a geometric compatibility condition on the foliation itself.

More precisely, this amounts to the possibility of finding a foliation whose conformal factor $\lambda^2\lr{\Psi,\chi,\phi}$, which determines the intrinsic curvature of each leaf, satisfies \eqref{gcX}. Indeed, from $g^{\Psi\Psi}=\lambda^4/J^2$, $g^{\Psi\chi}=-\lambda^2 g_{\Psi\chi}/J^2$, $g^{\Psi\phi}=-\lambda^2 g_{\Psi\phi}/J^2$, and the Jacobian identity
\eq{
J=\lambda\sqrt{\lambda^2 g_{\Psi\Psi}-g_{\Psi\phi}^2-g_{\Psi\chi}^2},
}
one obtains
\eq{
g_{\Psi\Psi}=
\frac{1}{g^{\Psi\Psi}}
\lrc{
1+\frac{\lambda^2}{g^{\Psi\Psi}}
\lrs{
\lr{g^{\Psi\chi}}^2+\lr{g^{\Psi\phi}}^2
}
}.
}
Hence the covariant metric coefficients $g_{\Psi\Psi}$, $g_{\Psi\chi}$, $g_{\Psi\phi}$, $\lambda^2$,  can be expressed entirely in terms of $g^{\Psi\Psi}$, $g^{\Psi\chi}$, $g^{\Psi\phi}$, $\lambda^2$. Therefore, a necessary and sufficient condition for the existence of the present type of steady Euler flow in $V$ is that there exist a local potential $\tilde{\sigma}$, solution of \eqref{ellX}, such that the conformal factor $\lambda^2$ appearing within the Euclidean line element  
\eq{
ds^2=
\frac{1+\frac{\lambda^2}{g^{\Psi\Psi}}\lrs{\lr{g^{\Psi\chi}}^2+\lr{g^{\Psi\phi}}^2}}{g^{\Psi\Psi}}\,d\Psi^2
-2\frac{\lambda^2\lr{g^{\Psi\chi}d\chi+g^{\Psi\phi}d\phi}}{g^{\Psi\Psi}}\,d\Psi
+\lambda^2\lr{d\chi^2+d\phi^2},
}
satisfies
\eq{
\lambda^2=
\frac{1}{p'}
\lrs{
\frac{1}{2}\frac{\p}{\p\Psi}\lr{\tilde{\sigma}_{\chi}^2+\tilde{\sigma}_{\phi}^2}
+
\lr{\tilde{\sigma}_{\chi}\frac{\p}{\p\chi}+\tilde{\sigma}_{\phi}\frac{\p}{\p\phi}}
\lr{
\frac{g^{\Psi\chi}\tilde{\sigma}_{\chi}+g^{\Psi\phi}\tilde{\sigma}_{\phi}}{g^{\Psi\Psi}}
}
}.\label{lambda2} 
}

On each leaf $\Sigma_{\Psi}$, the induced metric is
\eq{
g_{\Psi}=\lambda^2\lr{d\chi^2+d\phi^2}=e^{2f}\lr{d\chi^2+d\phi^2},
\qquad f=\log\lambda.
}
Hence, by the two-dimensional conformal curvature formula \cite[thm.~7.30]{Lee2018}, the scalar curvature
$R=g^{ij}R_{ij}$ of the leaf, where $R_{ij}$ is the Ricci curvature tensor, satisfies
\eq{
R=-\frac{1}{\lambda^2}\lr{\frac{\p^2}{\p\chi^2}+\frac{\p^2}{\p\phi^2}}\log\lambda^2.
}
Equivalently, since $R=2K$ on a surface, the Gaussian curvature is
\eq{
K=-\frac{1}{\lambda^2}\lr{\frac{\p^2}{\p\chi^2}+\frac{\p^2}{\p\phi^2}}\log\lambda.
}
Thus the geometric condition \eqref{lambda2} may be viewed as a constraint on the intrinsic curvature of the leaves, encoded through the conformal factor $\lambda^2$.
\end{remark}

}\fi

\section*{Statements and Declarations}
\subsection*{Data availability}
Data sharing not applicable to this article as no datasets were generated or analysed during the current study.


\subsection*{Competing interests} 
The authors have no competing interests to declare that are relevant to the content of this article.



\bibliographystyle{alphaurl}
\bibliography{refs_initials}



 



\end{document}